\numberwithin{equation}{section}
\numberwithin{figure}{subsection}
\def\cf{cf.~}
\newcommand{\hsp}[1]{{\hbox{\hspace{#1}}}}
\newcommand{\mystack}[2]{\ensuremath{ \substack{ \hbox{\tiny{${#1}$}} \\ \hbox{\tiny{${#2}$}} }} }
\newcounter{letcnt1} % letter section
\newcounter{letcnt2} % letter subsection
\newcounter{talkcnt} % talk section
\def\a{\alpha}  
\def\b{\beta}  
\def\d{\delta}  
\def\e{\varepsilon}  
\def\z{\zeta}
\def\m{\mu}
\def\n{\nu}
\def\s{\sigma}
\def\x{\xi}
\def\sA{\mathscr{A}} 
 \def\sfa{\mathsf{a}}
\def\tAd{\mathrm{Ad}}
 \def\sB{\mathscr{B}}
 \def\sfb{\mathsf{b}}
\def\bC{\mathbb C}
\def\fc{\mathfrak{c}} \def\sfc{\mathsf{c}}
\def\fD{\mathfrak D}
\def\fd{\mathfrak{d}} 
\def\td{\mathrm{d}} 
 \def\tdet{\mathrm{det}}
 \def\tdim{\mathrm{dim}}
 \def\fe{\mathfrak{e}}
\def\cF{\mathcal F} 
\def\ff{\mathfrak{f}}
\def\tGL{\mathrm{GL}}
\def\tGr{\mathrm{Gr}}
\def\fg{{\mathfrak{g}}} 
\def\fgl{\mathfrak{gl}}
\def\tgr{\mathrm{gr}}
\def\sfH{\mathsf{H}} \def\sH{\mathscr{H}}
\def\bi{\mathbf{i}}
\def\tId{\mathrm{Id}} \def\tIm{\mathrm{Im}}
 \def\tim{\mathrm{im}}
\def\sK{\mathscr{K}}
\def\sfk{\mathsf{k}} 
 \def\tker{\mathrm{ker}}
\def\cM{\mathcal M}  
\def\sfm{\mathsf{m}}
 \def\cN{\mathcal N}
 \def\sfn{\mathsf{n}}
 \def\cO{\mathcal O}
\def\bP{\mathbb P} \def\cP{\mathcal P}
 \def\olP{{\overline P}}
\def\fp{\mathfrak{p}}
\def\bQ{\mathbb Q}  \def\sQ{\mathscr{Q}}
\def\bR{\mathbb R}
\def\tRe{\mathrm{Re}}
\def\sS{\mathscr{S}}
\def\bs{\mathbf{s}} 
\def\tSL{\mathrm{SL}}
\def\tStab{\mathrm{Stab}}
 \def\tspan{\mathrm{span}}
\def\fsl{\mathfrak{sl}}
  \def\tU{\mathrm{U}}
\def\sU{\mathscr{U}}
 \def\cV{\mathcal V}
\def\sfw{\mathsf{w}}
\def\sX{\mathscr{X}}
   \def\bZ{\mathbb Z}
\def\tand{\quad\hbox{and}\quad}
\def\bs{\backslash}
\def\tinyb{{\hbox{\tiny{$\bullet$}}}}
\def\smallb{{\hbox{\small{$\bullet$}}}}
\def\inj{\hookrightarrow}
\def\sur{\twoheadrightarrow}
\def\op{\oplus}
\def\ot{\otimes}
\def\tw{\hbox{\small $\bigwedge$}}
\newenvironment{a.list}
  {\begin{enumerate}[label=\alph*.,itemsep=3pt,leftmargin=25pt,listparindent=20pt]}
  {\end{enumerate}}
\newenvironment{a_list_emph}
  {\begin{enumerate}[label=\emph{(\alph*)},itemsep=3pt,leftmargin=25pt,listparindent=20pt]}
  {\end{enumerate}}
\newenvironment{num.list}
  {
  \begin{enumerate}[itemsep=3pt,leftmargin=25pt,listparindent=20pt,label={\arabic*.}]
  }
  {\end{enumerate}}
\newenvironment{i_list_emph}
  {\begin{enumerate}[label=\emph{(\roman*)},itemsep=3pt,leftmargin=25pt,listparindent=20pt]}
  {\end{enumerate}}
\newtheorem{corollary}[equation]{Corollary}
\newtheorem{lemma}[equation]{Lemma}
\newtheorem{proposition}[equation]{Proposition}
\newtheorem{theorem}[equation]{Theorem}
\newtheorem{conjecture}[equation]{Conjecture}
\newtheorem*{mainthm*}{Main Theorem}
\newtheorem*{theorem*}{Theorem}
\newtheorem*{corollary*}{Corollary}
\newtheorem*{lemma*}{Lemma}
\theoremstyle{definition}
\newtheorem*{boldQ*}{Question}
\newtheorem*{boldP*}{Problem}
\theoremstyle{definition}
\theoremstyle{remark}
\newtheorem*{assume*}{Assume}
\newtheorem*{answer*}{Answer}
\newtheorem*{claim*}{Claim}
\newtheorem*{definition*}{Definition}
\newtheorem*{example*}{Example}
\newtheorem*{hint*}{Hint}
\newtheorem*{notation*}{Notation}
\newtheorem{remark}[equation]{Remark}
\newtheorem*{remark*}{Remark}
\newtheorem*{remarks*}{Remarks}
\newtheorem*{fact*}{Fact}
\newtheorem*{emphQ*}{Question}
\newtheorem*{emphA*}{Answer}
\theoremstyle{plain}
\newtheorem*{thm-intro}{Theorem${}^\dagger$}
\def\fibre{A}
\def\monzero{\Gamma_\infty}
\def\olB{{\overline{B}}}
\def\cFe{\cF_\mathrm{e}}
\def\sfh{\mathsf{h}}
\def\ellt{\frac{\log t}{2\pi\bi}}
\def\Le{\Lambda_\mathrm{e}}
\def\olm{\overline{\mu}}
\def\oln{\overline{\nu}}
\def\olP{{\overline{\wp}}}
\def\tPhi{\widetilde\Phi}
\def\PhiSBB{\Phi_\mathrm{SBB}} 
\def\sPhiSBB{\Phi_\mathrm{SBB}'}
\def\Phipt{\Phi_\fibre} 
\def\tsum{{\textstyle{\sum}}}
\def\olU{\overline{\sU}}
\def\CKS{MR840721}
\def\CMSP{MR3727160}
\begin{document}
%----------------------------------------------------------

\title[Analog of SBB]{Analog of Satake-Baily-Borel for period maps}

\author[Green]{Mark Green}
\email{mlg@math.ucla.edu}
\address{UCLA Mathematics Department, Box 951555, Los Angeles, CA 90095-1555}

\author[Griffiths]{Phillip Griffiths}
\email{pg@math.ias.edu}
\address{Institute for Advanced Study, 1 Einstein Drive, Princeton, NJ 08540}
\address{University of Miami, Department of Mathematics, 1365 Memorial Drive, Ungar 515, Coral Gables, FL  33146}

\author[Robles]{Colleen Robles}
\email{robles@math.duke.edu}
\address{Mathematics Department, Duke University, Box 90320, Durham, NC  27708-0320} 
\thanks{Robles is partially supported by NSF DMS 1611939, 1906352.}

\date{\today}

\begin{abstract}
We propose an analog of the Satake--Baily--Borel compactification and Borel's extension theorem for arbitrary period maps.  The proposed analog is constructed as a proper topological completion of the period map.  It is conjectured that the construction is projective algebraic, and the conjecture is reduced to a certain extension problem.
\end{abstract}
\keywords{Period map, variation of Hodge structure}
\subjclass[2010]
{
 14D07, 32G20, % Variation of Hodge structures.
 %14N15. % AG : Classical problems, Schubert calculus
 %14M15, % AG: Grassmannians, Schubert varieties, flag manifolds
 %14M17, % AG: Homogeneous spaces
 %14C30, % Hodge theory (transcendental methods, Hodge conj)
 %32S35, % Mixed Hodge theory of singular varieties
 58A14. % Hodge theory (under global analysis).
 %53A55, % Differential Invariants
 %53C10, % G-structures
 %53C24, % Rigidity results
 %53C29, % Issues of holonomy
 %53C30. % Homogeneous manifolds
 %53C38, % Calibrations and calibrated geometries
 %58A15, % EDS (Cartan Thy)
 %58A17, % Pfaffian systems
}
\maketitle
%----------------------------------------------------------
\setcounter{tocdepth}{1}
%----------------------------------------------------------

%----------------------------------------------------------
\section{Introduction} 
%----------------------------------------------------------

%----------------------------------------------------------
\subsection{The setup} \label{S:setup}
%----------------------------------------------------------

Fix a smooth projective variety $\olB$ with simple normal crossing divisor $Z$.  Suppose the complement $B = \olB \bs Z$ admits a variation of (pure) polarized Hodge structure with local system
\[
\begin{tikzcd}[row sep=small,column sep=tiny]
  \mathbb{V} \arrow[r,equal] \arrow[d] &
  \tilde B \times_{\pi_1(B)} V_\bZ 
  \\ B &
\end{tikzcd}\]
and Hodge bundles
\begin{equation}\label{iE:vhs}
  \cF^p \ \subset \ 
  \cV \ = \ \tilde B \times_{\pi_1(B)} V_\bC \,.
\end{equation}
Here $V_\bZ$ is a lattice, $\tilde B \to B$ is the universal cover,  $\rho : \pi_1(B) \to \tGL(V_\bZ)$ is the monodromy representation, and $V_\bC = V_\bZ \ot_\bZ \bC$.  Let
\begin{equation} \label{iE:Phi}
  \Phi : B \ \to \ \Gamma \bs D 
\end{equation}
be the induced period map.  Here $D$ is a period domain parameterizing pure, weight $\sfn$, $Q$--polarized integral Hodge structures on $V_\bZ$; and $\Gamma = \rho(\pi_1(B))$ is the image of the monodromy representation.  Applying a Tate-twist if necessary, we may assume that the Hodge structures parameterized by $D$ are effective.  Without loss of generality the period map \eqref{iE:Phi} is proper \cite{MR0259958}.  The image
\[
  \wp \ = \ \Phi(B)
\]
is a quasi-projective variety \cite{MR4557401}.

In the special case that $D$ is hermitian symmetric, the period space $\Gamma \bs D$ is also quasi-projective, with projective completion given by the Satake--Baily--Borel (SBB) compactification $(\Gamma \bs D)^\ast$ \cite{MR0216035}.  And Borel's extension theorem \cite{MR0338456} yields an extension $\Phi^* : \olB \to (\Gamma\bs D)^\ast$ of the period map \eqref{iE:Phi}.  While the SBB compactification is defined without reference to Hodge theory, a posteriori one sees that the boundary points of $(\Gamma\bs D)^*$ encode Hodge theoretic data: the period map is asymptotically approximated by a nilpotent orbit in a neighborhood of $b \in Z$, and $\Phi^*(b)$ parameterizes certain quotient Hodge structures obtained from the nilpotent orbit.  

Our goal is to define an analog of $\Phi^*$ for arbitrary period maps.  

%----------------------------------------------------------
\subsection{Topological completion}
%----------------------------------------------------------

The desired completion should be obtained from Hodge theoretic data at infinity; that is, from the limiting mixed Hodge structures of the period map.  Write 
\[
  Z \ = \ Z_1 \,\cup\, Z_2 \,\cup\cdots\cup\, Z_\nu \,,
\]
with smooth irreducible components $Z_i$.  We denote by 
\[
  Z_I \ = \ \bigcap_{i\in I} Z_i
\]
the closed strata, and
\[
  Z_I^* \ = \ Z_I \,-\, \cup_{j\not\in I} Z_j 
\] 
the smooth strata.  As we approach a point $b \in Z_I^*$ the period map $\Phi$ degenerates to a limiting mixed Hodge structure $(W,F)$ that is polarized by a cone $\s_I$ of nilpotent operators (arising from logarithms of the local monodromy around $b$).  Both the weight filtration $W$ and cone $\s_I$ are independent of our choice of $b \in Z_I^*$.  The Hodge filtration $F \in \check D$ will vary along $Z_I^*$, and is well-defined only up to the action of $\exp(\bC\s_I)$ on the compact dual $\check D$.  Nonetheless, the induced Hodge filtration $F^p(\tGr^W_a)$ on the graded quotient $\tGr^W_a = W_a/W_{a-1}$ is well-defined.  In this way we obtain a period map 
\begin{equation}\label{E:P0I}
  \Phi_I : Z_I^* \ \to \ \Gamma_I\backslash D_I \,, 
\end{equation}
cf.~\S\ref{S:PhiIgr}.

\begin{thm-intro}[Theorem \ref{T:top}]
The maps \eqref{E:P0I} can be patched together to define an extension 
\begin{equation} \label{iE:Phi0}
  \PhiSBB : \olB \ \to \ \olP
\end{equation}
of $\Phi: B \to \wp$.  The image $\olP$ is a Hausdorff topological space compactifying $\Phi(B)=\wp$, and is a finite union (not necessarily disjoint) of quasi-projective varieties.  The map $\PhiSBB$ is continuous and proper, and the fibres are projective algebraic varieties.
\end{thm-intro}

\noindent (In the interest of conciseness, many the results discussed in this introduction are stated imprecisely and/or incompletely; this is indicated by the superscript ${}^\dagger$.  The reader will find the formal statements, with all necessary definitions, in the body of the paper.)

\begin{remark} \label{R:herm}
The completion \eqref{iE:Phi0} does generalize SBB and Borel's extension theorem: when $D$ is hermitian, $\olP$ is the closure of $\wp$ in $(\Gamma \bs D)^*$, and $\PhiSBB = \Phi^*$.
\end{remark}

%----------------------------------------------------------
\subsection{Conjectural algebraic structure}
%----------------------------------------------------------

One would like to know that $\olP$ is algebraic and $\PhiSBB$ is a morphism.  (And this is the case when $D$ is hermitian, by Remark \ref{R:herm}.)  At this time, we make the slightly weaker Conjecture \ref{conj:GGLR} that a finite cover of $\PhiSBB$ is algebraic.

\begin{thm-intro}[Theorem \ref{T:hattop}]
The Stein factorization 
\[
  \begin{tikzcd}
  B \arrow[r,"\Phi'"'] \arrow[rr,bend left,"\Phi"] 
  & \wp' \arrow[r] & \wp
  \end{tikzcd}
\]
of the period map extends to a ``Stein factorization'' 
\[
  \begin{tikzcd}
  \olB \arrow[r,"\PhiSBB'"'] \arrow[rr,bend left,"\PhiSBB"] 
  & \olP' \arrow[r] & \olP
  \end{tikzcd}
\]
of \eqref{iE:Phi0}. \emph{The fibres of $\PhiSBB'$ are connected, and the fibres of $\olP' \to \olP$ are finite.  And, as in Theorem \ref{T:top}, the image $\olP'$ is a Hausdorff topological space compactifying $\wp'$, and a finite union of normal projective varieties.}
\end{thm-intro}

\begin{conjecture}[{\cite{GGLR}}] \label{conj:GGLR}
The image $\sPhiSBB (\olB) = \olP'$ is projective algebraic, and the extension $\sPhiSBB:\olB \to \olP'$ is a morphism.
\end{conjecture}

\begin{theorem}[{\cite{GGLR}}]
If $\tdim\,B=2$,
%% and the period map \eqref{iE:Phi} satisfies local Torelli, 
then Conjecture \ref{conj:GGLR} holds. 
\end{theorem}

\noindent For more on the $\tdim\,B=2$ case, see \cite{GGdim2}.

%----------------------------------------------------------
\subsection{A semi-ampleness conjecture}
%----------------------------------------------------------

Bakker--Brunebarbe--Tsimerman showed that the line bundle
\[
   \Xi  \ = \ 
   \tdet(\cF^\sfn) \ot \tdet(\cF^{\sfn-1}) \ot \cdots \ot 
  \tdet(\cF^\sfk)
  \,,\quad \sfk \,=\, {\lceil (\sfn+1)/2 \rceil} 
\]
is semi-ample over $B$, and that $\wp = \mathrm{Proj}\left( \op_d\,H^0(B,\Xi^{\ot d}) \right)$ \cite{MR4557401}.  Conjecture \ref{conj:GGLR} is closely related to 

\begin{conjecture}[{\cite{GGLR}}] \label{conj:semiample}
Deligne's extension $\Xi_\mathrm{e}$ is semi-ample over $\olB$.
\end{conjecture}

\begin{remark}
Conjecture \ref{conj:semiample} holds when $\tdim\,\wp = 1$; and when $\tdim\,B=2$ \cite{GGLR}.
\end{remark}

\begin{remark}
If Conjecture \ref{conj:semiample} holds, then the extension $\PhiSBB: \olB \to \olP$ factors as 
\[ \begin{tikzcd}
  \olB \arrow[r,"\PhiSBB'"] & 
  \olP' \arrow[r] & 
  \mathrm{Proj}\left( \op_d\,H^0(\olB,\Xi^{\ot d}) \right) \arrow[r]
  & \olP \,.
\end{tikzcd} \]
In particular, if the conjecture holds, then it would be natural to redefine $\PhiSBB$ as the map $\olB \to \mathrm{Proj}\left( \op_d\,H^0(\olB,\Xi^{\ot d}) \right)$.
\end{remark}

Relative to Conjecture \ref{conj:semiample}, we are able to prove the following.  Set $\sfk = \lceil (\sfn+1)/2 \rceil$.  Let $\cFe^p \to \olB$ denote Deligne's extension \cite{MR1416353} of the Hodge vector bundles $\cF^p \to B$.    Define
\[
  \Le \ = \ \left\{
    \begin{array}{ll}
    \tdet(\cFe^\sfn) \,=\, \tdet(\cFe^1) \,,\quad & \sfn=1 \,,\\
    \tdet(\cFe^\sfn) \ot \tdet(\cFe^{\sfn-1}) \ot \cdots \ot 
    	\tdet(\cFe^\sfk) \,,\quad & 
    \sfn  \hbox{ even,} \\
    \left[\tdet(\cFe^\sfn) \ot \tdet(\cFe^{\sfn-1}) 
    \ot \cdots \ot \tdet(\cFe^{\sfk+1}) \right]^{\ot2} \, 
    	\ot \tdet(\cFe^\sfk)\, \,, \  &
    \sfn \ge 3 \hbox{ odd.}
    \end{array}
  \right.
\]
(Both the definition of $\Le$, and that of $\Xi_\mathrm{e}$, assume that the period domain $D$ parameterizes \emph{effective} Hodge structures.)

\begin{thm-intro}[Theorem \ref{C:descends}]
A power of $\Le \to \olB$ descends to $\olP'$.
\end{thm-intro}

\noindent Corollary \ref{C:descends} is a consequence of: (i) the existence of an open cover $\{X\}$ of $\olB$ with the property that $\PhiSBB|_X$ is proper (Corollary \ref{C:top}); (ii) strong constraints on the monodromy $\monzero$ of the variation of Hodge structure over $B \cap X$ (\S\ref{S:monpt}); and (iii) a subtle arithmetic property (Theorem \ref{T:chi-infty}) asserting that the values of a certain character are roots of unity.

\begin{remark} \label{R:2q}
The power ${}^{\ot 2}$ is necessary in the definition of $\Le$ when $\sfn \ge 3$ is odd; the line bundle $\Xi_\mathrm{e}$ need not descend.  Cf.~Remark \ref{R:nopower}.
\end{remark}

\begin{remark}
It is an open question whether or not $\Le$ descends to $\olP$.
\end{remark}

Finally, we are able to reduce Conjecture \ref{conj:GGLR} to an extension problem:

\begin{thm-intro}[Theorem \ref{T:conj}]
Conjecture \ref{conj:GGLR} holds if and only if the holomorphic functions on $Z_I \cap X$ extend to holomorphic functions on $X$, for all $X \in \{X\}$.
\end{thm-intro}

%----------------------------------------------------------
\subsection*{Acknowledgements}
%----------------------------------------------------------

We have benefited from illuminating conversations and correspondence with several colleagues.  We thank Radu Laza for many conversations, and the closely related collaboration \cite{GGLR}.  With respect to the discussion of the conjecture in \S\ref{S:dconj}, thanks are due to Gregory Pearlstein for pointing out the role of equivalence relations in such questions, and to Daniel Greb for pointing out the  Moishezon structure and the related \cite[Corollary 1.3]{Greb2020}.  We are indebted to Haohua Deng for pointing out Lemmas \ref{L:nbd-Z_I} and \ref{L:top}.  Special thanks are due to Ben Bakker who provided an illuminating counter-example (\S\ref{S:ben}) to a claim made in an earlier version of the paper (Remark \ref{R:nopower}).  The third author also thanks Farid Hosseinijafari for discussions related to the example.

%----------------------------------------------------------
\tableofcontents %\listoftables
%----------------------------------------------------------

%----------------------------------------------------------
\section{Review of local behavior at infinity} \label{S:background}
%----------------------------------------------------------

Here we set notation and review well-known properties of period maps and their local behavior at infinity.  Good references for this material include \cite{\CMSP, \CKS, MR2918237, MR0259958, MR0382272}.

%----------------------------------------------------------
\subsection{Group notation}\label{S:not-gp}
%----------------------------------------------------------

Given a ring $\bZ \subset R \subset \bC$, define $V_R = V_\bZ \ot_\bZ R$.  The polarization is a nondegenerate bilinear form $Q : V_\bQ \times V_\bQ \to \bQ$ satisfying
\[
  Q(u,v) \ = \ (-1)^\sfn Q(v,u) \,,
  \quad\hbox{for all}\quad u,v \in V_\bQ \,.
\]
Let $\tGL(V_R) \simeq \tGL_r(R)$ be the group of invertible $R$--linear maps $V_R \to V_R$.  Define
\[
  G_R \ = \ \tGL(V_R,Q)
  \ = \ 
  \{ g \in \tGL(V_R) \ | \ Q(gu,gv) = Q(u,v) \,,\ 
  \forall \ u,v \in V_R\}
\]
We have
\[
  \Gamma \ \subset \ G_\bZ \,.
\]
Let $\fgl(V_R) \simeq \fgl_r(R)$ be the Lie algebra of $R$--linear maps $V_R \to V_R$.  Set
\[
  \fg_R \ = \ \fgl(V_R,Q) \ = \ 
  \{ \xi \in \fgl(V_R) \ | \ 
  0 = Q(\xi u, v) +Q(u,\xi v) \,,\ \forall \ u,v \in V_R \} \,.
\]
When $R = \bR,\bC$, $G_R$ is a Lie group with Lie algebra $\fg_R$.

%----------------------------------------------------------
\subsection{Period maps at infinity} \label{S:vhs}
%----------------------------------------------------------

%----------------------------------------------------------
\subsubsection{}
%----------------------------------------------------------

Let 
\[
  \Delta \ = \ \{ t \in \bC \ | \ |t|<1\}
\]
denote the unit disc, and 
\[
  \Delta^* \ = \ \{ t \in \bC \ | \ 0<|t|<1\} 
\]
the punctured unit disc.  The upper half plane
\[
  \sH \ = \ \{ z \in \bC \ | \ \tIm\,z>0 \}
\]
is the universal cover of $\Delta^*$, with covering map
\[
  \sH \ \to \ \Delta^*
  \quad\hbox{sending}\quad
  z \ \mapsto \ t = e^{2\pi\bi z} \,.
\]
Let 
\[
  \ell(t) \ = \ \ellt 
\]
denote the multivalued inverse.

%----------------------------------------------------------
\subsubsection{} \label{S:not-U}
%----------------------------------------------------------

Fix a point $b \in Z_I^* \subset \olB$.  We may choose a local coordinate chart 
\[
  (t,w) : \olU \,\subset\,\olB \ \stackrel{\simeq}{\longrightarrow} \ 
  \Delta^{k+r}
\]
centered at a point $b$ with 
\[
  (t,w) : \sU \,=\,B \,\cap\,\olU \ \stackrel{\simeq}{\longrightarrow} \ 
  (\Delta^*)^k \times \Delta^r \,.
\]
Without loss of generality $\olU \cap Z_I = \olU \cap Z_I^*$.  After reindexing the $Z_i$ if necessary, we have
\[
  \olU \,\cap\, Z_i \ = \ \{ t_i=0\} \,,
  \quad \hbox{for all}\quad 1 \le i \le k \,,
\]
and $\olU \,\cap\, Z_\mu = \emptyset$ for all $k+1 \le \mu \le \nu$.

%----------------------------------------------------------
\subsubsection{} \label{S:not-cone}
%----------------------------------------------------------
The counter-clockwise generator $\a_i \in \pi_1(\Delta^*) \inj \pi_1((\Delta^*)^k) = \pi_1(\sU)$ induces a quasi-unipotent monodromy operator $\gamma_i \in \tGL(V,Q)$, $1 \le i \le k$ \cite{MR0382272}.  Let $\gamma_i = \gamma_{i,s}\,\gamma_{i,u}$ be the Jordan decompostion, and  
\[
  N_i \ = \ \log\gamma_{u,i} \ \in \ \fg
\]
the nilpotent logarithm the unipotent factor.  Set
\[
  \s_I \ = \ \tspan_{\bQ_{>0}}\{ N_1 , \ldots , N_k\} 
  \ \subset \ \fg  \,,
\]
the \emph{monodromy cone (at $b$)}.

%----------------------------------------------------------
\subsubsection{} \label{S:not-loclift}
%----------------------------------------------------------

The universal cover of $\sU$ is $\widetilde \sU = \sH^k \times \Delta^r$.  The local lift
\[
  \tPhi : \widetilde\sU \ \to \ D
\]
of $\left.\Phi\right|_{\sU}$ is of the form
\begin{equation}\label{E:tPhi}
  \tPhi(z,w) \ = \ \exp( \tsum \ell(t_i) N_i ) 
  \,\tilde g(t,w) \cdot F \,.
\end{equation}
Here, $F$ is an element of the compact dual $\check D \supset D$ (which is the flag variety parameterizing the filtrations $F^p(V_\bC)$ that satisfy the first Hodge--Riemann bilinear relation 
\begin{equation}\label{E:QF}
  Q(F^p,F^q) \ = \ 0 \,,\quad \forall \ p+q > \sfn
\end{equation}
but not necessarily the second); the group $G_\bC$ acts transitively on $\check D$, and 
\begin{equation}\label{E:tilde-g}
  \tilde g : \olU \ \to \ G_\bC
\end{equation}
is a holomorphic map; and we abuse notation by conflating the multi-valued $\ell(t_i)$ with the coordinates $z_i$ on $\sH^k$.

%----------------------------------------------------------
\subsection{Limiting mixed Hodge structures} \label{S:LMHS}
%----------------------------------------------------------

%----------------------------------------------------------
\subsubsection{} 
%----------------------------------------------------------

A nilpotent operator $N\in\s_I$ determines a rational, increasing filtration $W_0 \subset W_1 \subset \cdots \subset W_{2\sfn} = V_\bQ$.  This is the unique filtration satisfying the conditions \eqref{SE:W(N)}: first, 
\begin{subequations}\label{SE:W(N)}
\begin{equation} \label{E:W(N)}
  N(W_\ell) \ \subset \ W_{\ell-2} \,.
\end{equation}
If we set $\tGr^W_\ell(V) = W_\ell/W_{\ell-1}$, then \eqref{E:W(N)} implies that there is a well-defined map $N: \tGr^W_\ell \to \tGr^W_{\ell-2}$.  The map 
\begin{equation}
  N^k : \tGr^W_{\sfn+k} \ \to \ \tGr^W_{\sfn-k}
  \quad\hbox{is an isomorphism for all } k \ge 0 \,.
\end{equation}
\end{subequations}

%----------------------------------------------------------
\subsubsection{} \label{S:not-LMHS}
%----------------------------------------------------------

Any two $N , N' \in \s_I$ determine the same filtration $W$, and we call $W = W(\s_I)$ the \emph{weight filtration} of the monodromy cone.  This weight filtration is $Q$--isotropic
\begin{subequations}\label{SE:QW}
\begin{equation}\label{E:QW}
  Q(W_k,W_\ell) \ = \ 0 \,,\quad \forall \ k+\ell < 2 \sfn \,.
\end{equation}
In particular, the polarization identifies
\begin{equation}\label{E:QW*}
  \tGr^W_\ell(V)^\vee \simeq \tGr^W_{2\sfn-\ell}(V) \,.
\end{equation}
\end{subequations}
If $F(w) = \tilde g(0,w)\cdot F$, then $(W,F(w))$, is a mixed Hodge structure (MHS) polarized by the local monodromy cone $\s_I$.  This means that $F(w)$ induces a weight $\ell$ Hodge structure $\tGr^W_\ell$; for each $N \in \s_I$, the kernel
\[
  P_{\sfn+k}(N) \ = \ 
  \tker\{ N^{k+1} : \tGr^W_{\sfn+k} \to \tGr^W_{\sfn-k-2}\}
\]  
is a Hodge substructure; and the weight $\sfn+k$ Hodge structure on $P_{\sfn+k}(N)$ is polarized by $Q(\cdot , N^k \cdot )$.
The triple $(W,F(w),\s_I)$ is a \emph{limiting} (or \emph{polarized}) \emph{mixed Hodge structure} (LMHS); and we say $\s_I$ \emph{polarizes} $(W,F(w))$.  

%----------------------------------------------------------
\subsubsection{}  \label{S:MI}
%----------------------------------------------------------

Define
\[
  M_I \ = \ \{ F \in \check D \ | \ (W,F) 
  \hbox{ is a MHS polarized by } \s_I \} \,.
\]
The stabilizer $P_W \subset G$ of $W$ is a parabolic subgroup of $G$.  The unipotent radical $P_W^{-1} \subset P_W$ is the subgroup acting trivially on $\tGr^W_\ell$, for all $\ell$.  Since $W = W(\s_I)$, the centralizer
\[
  C_I \ = \ \{ g \in G \ | \ \tAd_g(N) = N \,,\ \forall \ N \in \s_I \}
\] 
of the cone $\s_I$ is a subgroup of $P_W$.  Set $C_I^{-1} = C_I \,\cap\, P_W^{-1}$.  The group
\[
  G_I \ = \ (C_{I,\bR}/C_{I,\bR}^{-1}) \ltimes C_{I,\bC}^{-1}
\]
acts transitively $M_I$ \cite{MR3474815}.  

%----------------------------------------------------------
\subsubsection{} \label{S:PhiI}
%----------------------------------------------------------

The map 
\begin{equation}\label{E:FI}
  F_I \,:\, Z_I^* \cap \olU \ \to \ M_I \,,\quad
  w \mapsto F_I(w) = \tilde g(0,w) \cdot F
\end{equation}
defines a variation of limiting mixed Hodge structure $(W,F_I(w),\s_I)$ over $Z_I^* \cap \olU$.  The map \eqref{E:FI} is not well-defined; it depends on our choice of coordinates $(t,w)$.  What is well-defined is the composition 
\[
\begin{tikzcd}
  Z_I^* \cap \olU 
  \arrow[r,"F_I"] & 
  M_I \arrow[r,->>,"\nu_I"] &
  \exp(\bC\s_I)\bs M_I \,.
\end{tikzcd}
\]
(It is the nilpotent orbit that is well-defined.)  This yields a map 
\begin{equation} \label{E:PsiI}
  \Psi_I : Z_I^* \ \to \ (\exp(\bC\s_I) \Gamma_I) \bs M_I\,.
\end{equation}
Here $\Gamma_I \subset C_I \cap \Gamma$ is the monodromy of the variation of limiting mixed Hodge structure along $Z_I^*$.

%----------------------------------------------------------
\subsubsection{} \label{S:PhiIgr}
%----------------------------------------------------------

The quotient space 
\begin{equation}\label{E:DI}
  D_I \ = \ C_{I,\bC}^{-1} \bs M_I
\end{equation}
is a product (over $0 \le k \le \sfn$) of period domains parameterizing weight $\sfn+k$ Hodge structures on $P_{\sfn+k}(N)$ that are polarized by $Q(\cdot , N^k \cdot )$.

Let 
\[
  P_W^{-2} \ = \ \{ g \in P_W \ | \ g 
  \hbox{ acts trivially on } W_\ell/W_{\ell-2}\,,\ 
  \forall \ \ell \} \ \subset \ P_W^{-1} \,,
\]
and set 
\[
  C_I^{-2} \ = \ C_I \,\cap\, P_W^{-2} \,.
\]
The condition \eqref{E:W(N)} implies that $\exp(\bC\s_I) \subset C_{I,\bC}^{-2}$.  So the quotient map $M_I \sur D_I$ descends to $\exp(\bC_I)\bs M_I \sur D_I$, and we obtain a commuting diagram 
\[
  \begin{tikzcd}
    Z_I^* \arrow[r,"\Psi_I"]  \arrow[rd,"\Phi_I"']
    & (\exp(\bC\s_I) \Gamma_I)\bs M_I \arrow[d,->>]\\
    & \Gamma_I \bs D_I
  \end{tikzcd}
\]
that defines the map $\Phi_I$ of \eqref{E:P0I}.

%----------------------------------------------------------
\subsection{Deligne splittings} \label{S:not-ds}
%----------------------------------------------------------

%----------------------------------------------------------
\subsubsection{} \label{S:ds1}
%----------------------------------------------------------

Given a mixed Hodge structure $(W,F)$ on $V$, we have a Deligne splitting
\[
  V_\bC \ = \ \op\, V^{p,q}_{W,F} 
\]
satisfying
\begin{equation} \label{E:ds-WF}
  W_\ell \ = \ \bigoplus_{p+q\le \ell} V^{p,q}_{W,F} \tand
  F^k \ = \ \bigoplus_{p\ge k} V^{p,q}_{W,F} \,,
\end{equation}
and
\begin{equation}\label{E:ds-conj}
  \overline{V^{p,q}_{W,F}} \ \equiv \ V^{q,p}_{W,F}
  \quad \hbox{mod} \quad \bigoplus_{r<p,q<s} V^{r,s}_{W,F} \,.
\end{equation}
The mixed Hodge structure is \emph{$\bR$--split} if equality holds in \eqref{E:ds-conj}.  

If $(W,F,N)$ is a limiting mixed Hodge structure, then 
\begin{equation}\label{E:NVpq}
  N(V^{p,q}_{W,F}) \ \subset \ V^{p-1,q-1}_{W,F} \,,
\end{equation}
and the map
\[
  N^k : V^{p,q}_{W,F} \ \to \ 
  V^{p-k,q-k}_{W,F}
\]
is an isomorphism, for all $p+q=n+k$.  The decomposition is $Q$-orthogonal in the sense that
\begin{equation}\label{E:ds-Q}
  Q(V^{p,q}_{W,F} \,,\, V^{r,s}_{W,F}) \ = \ 0 \,,
  \quad\hbox{for all}\quad (p+r,q+s) \not= (\sfn,\sfn) \,.
\end{equation}

%----------------------------------------------------------
\subsubsection{}
%----------------------------------------------------------

A limiting mixed Hodge structure $(W,F,N)$ on $V$ induces one on the Lie algebra $\fg$
\begin{eqnarray*}
  F^p(\fg) & = & \{ \x \in \fg_\bC \ | \ \xi(F^k) \subset F^{k+p} 
  \,,\ \forall \ k \} \\
  W_\ell(\fg) & = & \{ \xi \in \fg \ | \ 
  \xi(W_k) \subset W_{k+\ell} \,,\ \forall \ k \} \,.
\end{eqnarray*}
The Deligne splitting
\begin{subequations}\label{SE:gpq}
\begin{equation}
  \fg_\bC \ = \ \op\,\fg^{p,q}_{W,F} \,,
\end{equation}
is given by 
\begin{equation} \label{E:gpq-dfn}
  \fg^{p,q}_{W,F} \ = \ 
  \{ \x \in \fg_\bC \ | \ \x(V^{r,s}_{W,F}) \subset V^{p+r,q+s}_{W,F}\,, 
  \ \forall \ r,s \} \,,
\end{equation}
%satisfies
%\begin{equation}\label{E:kappa}
%  \kappa( \fg^{p,q}_{W,F} \,,\, \fg^{r,s}_{W,F} ) \ = \ 0
%  \quad\hbox{if}\quad (p,q)+(r,s) \not= (0,0) \,,
%\end{equation}
and is compatible with the Lie bracket in the sense that 
\begin{equation}\label{E:lieb}
  [ \fg^{p,q}_{W,F} \,,\, \fg^{r,s}_{W,F} ] \ \subset \ \fg^{p+r,q+s}_{W,F} \,.
\end{equation}
\end{subequations}
Equation \eqref{E:NVpq} implies
\begin{equation}\label{E:Nin}
  N \ \in \ \fg^{-1,-1}_{W,F} \,.
\end{equation}

%----------------------------------------------------------
\subsubsection{}
%----------------------------------------------------------

The Lie algebra of the parabolic subgroup $P_{W,\bC} \subset G_\bC$ preserving the weight filtration $W$ is 
\begin{equation}\label{E:pW}
  \fp_W(\bC) \ = \ W_0(\fg_\bC) \ = \ 
  \bigoplus_{p+q\le0} \fg^{p,q}_{W,F} \,.
\end{equation}
Likewise 
\[
  \ff \ = \ F^0(\fg_\bC) \ = \ \bigoplus_{p\ge0}\,\fg^{p,q}_{W,F}
\]
is the parabolic Lie algebra of the stabilizer $\tStab_{G_\bC}(F)$.

%----------------------------------------------------------
\subsection{Schubert cells and period matrix representations} \label{S:schubert}
%----------------------------------------------------------

We have $\fg_\bC = \ff \op \ff^\perp$ with 
\begin{equation}\label{E:ffperp}
  \ff^\perp \ = \ \bigoplus_{p<0}\,\fg^{p,q}_{W,F}
\end{equation}
a nilpotent subalgebra of $\fg_\bC$.  The map $\tilde g$ of \eqref{E:tilde-g} is determined by specifying that it take value in $\exp(\ff^\perp) \subset G_\bC$: 
\begin{equation}\label{E:tilde-g2}
  \tilde g : \olU \ \to \ \exp(\ff^\perp)
\end{equation}
Since \eqref{E:Nin} implies that the $N_i \in \ff^\perp$, we see that 
\begin{equation}\label{E:xi}
  \xi(t,w) \ = \ \exp( \tsum \ell(t_i) N_i ) \,\tilde g(t,w)
\end{equation}
takes value in $\exp(\ff^\perp)$.  It follows that the local lift $\tPhi(t,w) = \xi(t,w) \cdot F$ takes value in the open Schubert cell $\sS \subset \check D$
\begin{equation} \label{E:Sp}
  \sS \ = \ \exp(\ff^\perp) \cdot F 
  \ = \ \left\{ E \in \check D \ | \ 
  \tdim\,(E^a \cap \overline{F_\infty^b}) 
  = \tdim\,(F^a \cap \overline{F_\infty^b}) \,,\ 
  \forall \ a ,b \right\} \,,
\end{equation}
defined by
\[
  \overline{F_\infty^b} \ = \ 
  \bigoplus_{c \le n-b} V^{c,a}_{W,F} \,.
\]

The map $\ff^\perp \to \sS$ sending $x \mapsto \exp(x) \cdot F$ is a biholomorphism.  Since $\ff^\perp \simeq \bC^d$, with $d = \tdim\,\check D$, this biholomorphism defines a coordinate chart on $\sS$.  The associated coordinate representation of $\xi(t,w)$ is the \emph{period matrix representation of $\Phi|_\sU$}.

%----------------------------------------------------------
\subsection{Infinitesimal period relation} \label{S:not-horiz}
%----------------------------------------------------------

The local lift of the period map is subject to a differential constraint, the \emph{infinitesimal period relation} (IPR): the map \eqref{E:xi} satisfies 
\[
  (\xi^{-1} \td \xi)^{p,q} \ = \ 0 \,,\quad\forall \ p \le -2 \,,
\]
with $(\xi^{-1} \td \xi)^{p,q}$ the component of the $\ff^\perp$--valued $\xi^{-1}\td \xi$ taking value in $\fg^{p,q}_{W,F}$.  Along $Z_I^* \cap \olU$ this forces the map $\tilde g$ of \eqref{E:tPhi}, \eqref{E:tilde-g} and \eqref{E:tilde-g2} to take value in the centralizer of $\s_I$,
\begin{equation}\label{E:tilde-g3}
  \tilde g(0,w) \ \in \ \exp(\ff^\perp) \,\cap\, C_{I,\bC} \,,
\end{equation}
and to satisfy the differential constraint
\begin{equation}\label{E:tilde-g-horiz}
  (\tilde g^{-1} \td \tilde g)^{p,q} 
  \ = \ 0 \,,\quad\forall \ p \le -2 \,.
\end{equation}

%----------------------------------------------------------
\section{Construction of $\PhiSBB : \olB \to \olP$}
%----------------------------------------------------------

%----------------------------------------------------------
\subsection{Extension to proper maps} \label{S:PhiW}
%----------------------------------------------------------

The period map $\Phi_I : Z_I^* \to \Gamma_I\bs D_I$ may not be proper, however it has a proper extension \cite{MR0259958}.  The map will extend to $Z_J^* \subset Z_I$ if and only if $W(\s_J) = W(\s_I)$.  Let $\Pi$ be the finest possible partition of the power set $\{I\}$ of $\{1,\ldots,\nu\}$ that satisfies the following property: given $\pi \in \Pi$ and $I \in \pi$, if $I \subset J$ and $W(\s_I) = W(\s_J)$ then $J \in \pi$.  The $\Gamma$--conjugacy class $[W]$ of the weight filtration is well-defined along
\begin{equation}\label{E:Zpi}
  Z_\pi \ = \ \bigcup_{I \in \pi} Z_I^* \,.
\end{equation}
The intersection $Z_I \cap Z_\pi$ is the \emph{weight-closure} of $Z_I^*$.   The map $\Phi_I$ extends to the weight-closure as follows (Lemma \ref{L:PhiI}).  Given $I,J\in\pi$ with $I \subset J$ we have $Z_J^* \subset Z_I \cap Z_\pi$ and $\Gamma_J \subset \Gamma_I$.  It is also the case that $D_J \subset D_I$ (Remark \ref{R:DJinDI}).  This induces maps $\Gamma_J \bs D_J \to \Gamma_I \bs D_I$.  The composition
\[
\begin{tikzcd}
  Z_J^* \arrow[r,"\Phi_J"] 
  & \Gamma_J\bs D_J \arrow[r] & \Gamma_I \bs D_I \,.
\end{tikzcd}
\]
defines a proper extension
\begin{equation}\label{E:PhiIe}
\begin{tikzcd}
  Z_I \,\cap\, Z_\pi 
  \arrow[r,"\Phi_I"] 
  & \Gamma_I \bs D_I 
\end{tikzcd}
\end{equation}
of $\Phi_I$ to the weight closure $Z_I \cap Z_\pi$ (Lemma \ref{L:PhiI}).  By the proper mapping theorem, the image 
\[
  \wp_I \ = \ \Phi_I(Z_I \cap Z_\pi)
\]
is complex analytic space (and quasi-projective by \cite{MR4557401}).

The following lemma will be useful.  Let $\Gamma_I = C_{I,\bQ} \cap \Gamma$ be the monodromy operators centralizing the cone $\s_I$.

\begin{lemma} \label{L:nbd-Z_I}
Suppose $I \in \pi$.  There exists a neighborhood $Y \subset \olB$ of $Z_I$ so that the restriction of the period map $\Phi$ to $U = B \cap Y$ lifts to $\Gamma_I \bs D$: there is a holomorphic map $\Phi_U : U \to \Gamma_I\bs D$ so that the diagram 
\[
\begin{tikzcd}
  & \Gamma_I \bs D \arrow[d] \\
  U \arrow[r,"\Phi"'] \arrow[ru,"\Phi_U"]
  & \Gamma \bs D
\end{tikzcd}
\]
commutes.
\end{lemma}

Informally, we say \emph{the monodromy near $Z_I$ takes value in $\Gamma_I$}.

\begin{proof}
It suffices to exhibit the neighborhood $Y$, and flat sections $\mathbf{N}_i$ of $\fgl(\mathbb{V})$ over $U = B \cap Y$, with the property that $\mathbf{N}_i(u)$ is identified with $N_i$, under parallel transport, for every $u \in U$ and $i \in I$.

Given any point $a \in Z_I$, fix a coordinate neighborhood $\olU_a \subset \olB$ centered at $a$, as in \S\ref{S:vhs}.  Given any point $(t,w) \in \sU_a = B \cap \olU \simeq (\Delta^*)^k \times \Delta^r$, consider the loop $\gamma_i(s) = (t_1,\ldots,t_{i-1},t_i\,e^{2\pi\bi s}, t_{i+1},\ldots,t_k;w)$, $0 \le s \le 1$, in $\sU$.  (Here $i \in I \subset \{1,\ldots,k\}$.)  Parallel transportation around this loop defines a quasi-unipotent operator $\mathbf{T}_i(t,w) \in \tGL(\mathbb{V}_{(t,w)})$; let $\mathbf{N}_i(t,w) \in \fgl(\mathbb{V}_{(t,w)})$ denote associated nilpotent operator, cf.~\S\ref{S:not-cone}.  

Define $Y = \cup_{a \in A}\,\olU_a$.  The section $\mathbf{N}_i$ of $\fgl(\left.\mathbb{V}\right|_{\sU_a})$ is independent of our choice of coordinates, and so defines the desired section of $\fgl(\mathbb{V})$ over $U = B \cap Y$.
\end{proof}

%----------------------------------------------------------
\subsection{A topological completion} \label{S:set-c}
%----------------------------------------------------------

Consider the disjoint union
\[
  \tilde\wp_\pi \ = \ \bigsqcup_{I \in \pi} \wp_I \,.
\]
If $b \in Z_I \cap Z_J \cap Z_\pi$ then we identify the two points $x = \Phi_I(b) \in \wp_I$ and $y = \Phi_J(b) \in \wp_J$.  Let $\wp_\pi$ be the quotient of $\tilde\wp_\pi$ by the equivalence relation generated by this identification.  Set
\[
  \olP \ = \ \bigcup \wp_\pi  \,.
\]

Define a map
\begin{equation}\label{iE:Phi1Phi0}
  \PhiSBB : \olB \ \to \  \olP 
\end{equation}
by specifying $\left.\PhiSBB\right|_{Z_I \cap Z_\pi} = \Phi_I$ (modulo identifications imposed by the equivalence relation defining $\wp_\pi$).  Fix a Riemannian metric on $\olB$.  Since the fibres of $\PhiSBB$ are compact (because the maps $\Phi_I : Z_I \cap Z_\pi \to \Gamma_I\bs D_I$ are proper), there is an induced metric on $\olP$.  Endow $\olP$ with the metric topology.  

\begin{theorem}\label{T:top}
The topology on $\olP$ is Hausdorff.  The induced subspace topology on the image $\wp_I \to \olP$ coincides with the natural topology on the image.  The map $\PhiSBB : \olB \to \olP$ is continuous and proper, and $\olP$ is a topological compactification of $\wp$.
\end{theorem}

\begin{proof}
It is clear that the induced subspace topology coincides with the natural topology on the image $\wp_I = \Phi_I(Z_I \cap Z_\pi)$.  The topology on $\olP$ is Hausdorff if and only if the map $\PhiSBB$ is continuous.  In this case, the map is necessarily proper.  So it suffices to establish the continuity of $\PhiSBB$.  

Suppose that $b_i \in \olB$ is a sequence of points converging to $b_\infty \in \olB$.  Let $\fibre_i$ and $\fibre_\infty$ be the fibres of $\PhiSBB$ through $b_i$ and $b_\infty$, respectively.  Let $b_i'\in\fibre_i$.  Since $\olB$ is compact, $\{b_i'\}$ contains a convergent subsequence; abusing notation, let $\{b_i'\}$ denote that convergent subsequence with limit $b'_\infty$.  The essential point is to prove that
\begin{equation}\label{E:top}
  b'_\infty \ = \ \lim_{i \to \infty} b'_i \ \in \ \fibre_\infty \,.
\end{equation}
Informally this says
\[
  \lim_{i \to \infty} A_i \ \subset \ A_\infty \,.
\]
The local analog of this assertion is Lemma \ref{L:rwfp}.  The ``globalization'' will follow from a certain finiteness result for Siegel domains.

Fix two coordinate charts $\olU$ and $\olU'$ centered at $b_\infty$ and $b_\infty'$ respectively, and local lifts $\tPhi(t,w)$ and $\tPhi'(t,w)$, as in \S\ref{S:not-loclift}.  

\smallskip

\emph{Case 1.}  First assume that both sequences $\{b_i\}$ and $\{b_i'\}$ are contained in $B$.  Without loss of generality, $b_i \in \sU$ and $b'_i \in \sU'$.  Since $b_i,b_i' \in \fibre_i$, there exists $\gamma_i \in \Gamma$ so that $\tPhi'(b'_i) = \gamma_i \cdot \tPhi(b_i)$.  Shrinking $\olU$ if necessary, there exists a finite union $\fD \subset D$ of Siegel sets so that $\tPhi(\widetilde \sU) \subset \fD$.  (In the case of one-variable degenerations this is a corollary of Schmid's $\tSL(2)$ orbit theorem \cite[(5.26)]{MR0382272}.  In the general case, this is \cite[Theorem 1.5]{MR4155216}.)  Likewise, we have a finite union $\fD' \subset D$ of Siegel sets so that $\tPhi'(\widetilde \sU') \subset \fD'$.  It follows that there are only finitely many distinct $\gamma_i$.  Restricting to a subsequence with all $\gamma_i = \gamma$ equal, we have $\tPhi'(b'_i) = \gamma \cdot \tPhi(b_i)$.  Since we may replace the local lift $\tPhi'$ with $\gamma^{-1}\cdot\tPhi'$, we may assume without loss of generality that $\tPhi'(b'_i) = \tPhi(b_i)$.  This forces $b_\infty$ and $b_\infty'$ to lie in the same $\PhiSBB$--fibre, and establishes the desired \eqref{E:top}, in the case that $\{b_i,b_i'\} \subset B$.

\smallskip

We now turn to the general case.  Since $\PhiSBB(b_i) = \PhiSBB(b_i')$, it follows that $b_i$ and $b_i'$ lie in the same weight strata $Z_{\pi_i}$, for some $\pi_i \in \Pi$.  Since $\Pi$ is finite, we may assume without loss of generality that the $\{ b_i , b_i'\} \subset Z_\pi$, for some fixed $\pi \in \Pi$.  Let $\pi_\mathrm{min} = \{ I \in \pi \ | \ J \in \pi \hbox{ and } J \subset I \hbox{ implies } J = I \}$ be the set of minimal elements (with respect to containment) in $\pi$.  Note that 
\[
   Z_\pi \ = \ \bigcup_{I \in \pi_\mathrm{min}} 
   Z_I \cap Z_\pi \,,
\]
and the $\{ Z_I \cap Z_\pi \ | \ I \in \pi_\mathrm{min}\}$ are the irreducible components of $Z_\pi$.  As there are only finitely many such components, we may assume without loss of generality that there exists $I,I' \in \pi_\mathrm{min}$ so that $\{b_i\} \subset Z_I \cap Z_\pi$ and $\{b_i'\} \subset Z_{I'} \cap Z_\pi$.

\smallskip

\emph{Case 2.}  Suppose that $I = I'$ and $\Phi_I(b_i) = \Phi_I(b_i')$.  Let $\tgr(F_I)$ be the local lift of $\Phi_I$ in $Z_I^* \cap \olU$, and $\tgr(F_I)'$ be the local lift of $\Phi_I$ in $Z_I^* \cap \olU{}'$.  Then Lemma \ref{L:nbd-Z_I} ensures $\tgr(F_I)'(b_i') = \gamma_i \cdot \tgr(F_I)(b_i)$ for some $\gamma_i \in \Gamma_I = C_{I,\bQ} \cap \Gamma$.  Now the argument above (in Case 1) applies.  The finiteness property of Siegel sets implies that there are only finitely many distinct $\gamma_i$.  (More precisely, the action of $\Gamma_I$ on $D_I$ factors through the quotient $p_I : \Gamma_I \to \Gamma_I/(\Gamma \cap C_{I,\bQ}^{-1})$, and there are only finitely many $p_I(\gamma_i)$.)  So we may normalize the local lifts so that $\tgr(F_I)'(b_i') = \tgr(F_I)(b_i)$.  Then the desired \eqref{E:top} follows from Lemma \ref{L:rwfp}.

\smallskip

\emph{Case 3.}  Now suppose that either $I = I'$ but $\Phi_I(b_i) \not= \Phi_I(b_i')$, or that $I \not= I'$.  Let $\sA_i \subset Z_I \cap Z_\pi$ be the $\Phi_I$--fibre through $b_i$, and let $\sA_i' \subset Z_{I'} \cap Z_\pi$ be the $\Phi_{I'}$--fibre through $b_i'$.  Then $\PhiSBB(b_i) = \PhiSBB(b_i')$, and the equivalence relation defining $\wp_\pi = \tilde\wp_\pi/\sim$, implies that there exists some $\Phi_{J_i}$--fibre $\sB_i$, with $J_i \in \pi_\mathrm{min}$ so that $\sB_i$ intersects both $\sA_i$ and $\sA_i'$.  Again, since $\pi_\mathrm{min}$ is finite, we may assume without loss of generality that the $J_i = J$ all coincide.  So $\sA_i \cap \sB_i \subset Z_I \cap Z_J \cap Z_\pi = Z_{I \cup J} \cap Z_\pi$ (in particular, $I \cup J \in \pi$, cf.~Corollary \ref{C:IW}) and $\sA_i' \cap \sB_i \subset Z_{I'} \cap Z_J \cap Z_\pi = Z_{I' \cup J} \cap Z_\pi$.  So the local lifts of $\Phi_I$ and $\Phi_{I'}$ satisfy $\tgr(F_I)(b_i) \in D_{I \cup J}$ and $\tgr(F_{I'})(b_i') \in D_{I'\cup J}$; in particular, both $\tgr(F_I)(b_i) , \tgr(F_{I'})(b_i') \in D_J$, cf.~Remark \ref{R:DJinDI}.  The fact that $\sB_i$ intersects both $\sA_i$ and $\sA_i'$ implies that $\tgr(F_{I'})(b_i') = \gamma_i \cdot \tgr(F_I)(b_i)$ for some $\gamma_i \in \Gamma_J$.  Now the arguments of Case 2 apply to yield the desired \eqref{E:top}.
\end{proof}

%----------------------------------------------------------
\subsection{A ``Stein factorization'' of $\PhiSBB$} \label{iS:gglr}
%----------------------------------------------------------

Since the period map $\Phi: B \to \Gamma\bs D$ is proper, we may consider the Stein factorization
\begin{subequations}\label{SE:stein}
\begin{equation}
\begin{tikzcd}
  B \arrow[r,"\Phi'"'] \arrow[rr,bend left,"\Phi"] 
  & \wp' \arrow[r] & \wp
  \,.
\end{tikzcd}
\end{equation}
The fibres of $\Phi'$ are connected, the fibres of $\wp' \to \wp$ are finite, and $\wp'$ is a normal complex analytic space.  Likewise, we have Stein factorizations
\begin{equation}\label{E:steinI}
\begin{tikzcd}[row sep=tiny]
  Z_I \,\cap\, Z_\pi \arrow[r,"\Phi_I'"']  
  \arrow[rr,bend left,"\Phi_I"] 
  & \wp_I' \arrow[r] & \wp_I
\end{tikzcd}
\end{equation}
\end{subequations}
of the maps \eqref{E:PhiIe}.  Again, the fibres of $\Phi'_I$ are connected, the fibres of $\wp'_I \to \wp_I$ are finite, and $\wp'_I$ is normal.

Consider the disjoint union
\[
  \tilde\wp'_\pi \ = \ \bigcup_{I \in \pi} \wp'_I \,.
\]
If $b \in Z_I \cap Z_J \cap Z_\pi$ then we identify the two points $x = \Phi_I'(b) \in \wp_I'$ and $y = \Phi_J'(b) \in \wp_J'$.  Let $\wp_\pi'$ be the quotient of $\tilde\wp_\pi'$ by the equivalence relation generated by this identification.  Set
\[
  \olP' \ = \ \bigcup \wp_\pi'  \,.
\]
Note that we have a well-defined map $\olP' \to \olP$.  The ``Stein factorization'' 
\begin{equation}\label{iE:hatP}
\begin{tikzcd}
  \olB \arrow[rr,bend left,"\PhiSBB"] \arrow[r,"\PhiSBB'"']
  & \olP' \arrow[r]
  & \olP
\end{tikzcd}\end{equation}
of the map $\PhiSBB$ in \eqref{iE:Phi1Phi0} is given by specifying that the restrictions to $Z_I \cap Z_\pi$ coincide with \eqref{SE:stein}.  Again, the fibres of $\PhiSBB'$ are connected, and the fibres of $\olP' \to \olP$ are finite.  

Since the fibres of $\PhiSBB'$ are compact, the Riemannian metric on $\olB$ defines a metric on $\olP'$.  Endow $\olP'$ with the induced metric topology.  The analog of Theorem \ref{T:top} holds by essentially the same argument.

\begin{theorem}\label{T:hattop}
The topology on $\olP'$ is Hausdorff.  The induced subspace topology on the image $\Phi_I'(Z_I \cap Z_\pi) \inj \olP'$ coincides with the natural topology on the image as a normal complex analytic space.  The maps of \eqref{iE:hatP} are continuous and proper, and $\olP'$ is a topological compactification of $\wp'$.
\end{theorem}

\begin{corollary} \label{C:top}
Let $A \subset \olB$ be a fibre of $\PhiSBB'$.  \emph{(Equivalently, $A$ is a connected component of a $\PhiSBB$--fibre.)}  Fix a neighborhood $\PhiSBB'(A)\in \sX \subset \olP'$.  Then $X = (\PhiSBB')^{-1}(\sX) \subset \olB$ is a neighborhood of $A$ with the property that the maps $\left.\PhiSBB\right|_X$ and $\PhiSBB'\big|_X$ are proper.
\end{corollary}

\begin{lemma} \label{L:top}
Let $\sK_\pi \subset \wp_\pi'$ be a compact set, and let $Y \subset \olB$ be any open set containing the compact preimage $K_\pi=(\PhiSBB')^{-1}(\sK_\pi) \subset Z_\pi$ of $\sK_\pi$ under $\PhiSBB'$.  There exists an open neighborhood $\sX$ of $\sK_\pi$ in $\olP'$ with the property that $X = (\PhiSBB')^{-1}(\sX) \subset Y$.
\end{lemma}

\begin{remark}
We will be particularly interested in applying the lemma in the case that $\sK_\pi$ is a point, and $K_\pi$ is a fibre of $\PhiSBB'$.
\end{remark}

\begin{proof}
We argue by contradiction.  Suppose that $\{x_n\} \subset \olP$ is sequence converging to $x_\infty \in \sK_\pi$, so that $A_n = (\PhiSBB')^{-1}(x_n) \not\subset Y$.  Fix $a_n \in A_n - Y$.  Since $\olB$ is compact, $\{a_n\}$ contains a convergent subsequence $a_{n_j} \to a_\infty \in \olB \bs Y$.  Since $K_\pi=(\PhiSBB')^{-1}(\sK_\pi)$ is compact (Theorem \ref{T:hattop}), there exists $\e>0$ so that $\{ b \in \olB \ | \ \mathrm{dist}(b,K_\pi) < \epsilon\} \subset Y$. In particular, $\mathrm{dist}(a_\infty,K_\pi) \ge \e$.  On the other hand, $x_n\to x_\infty$, and the continuity of $\PhiSBB'$ implies $a_\infty \in K_\pi$, a contradiction.
\end{proof}

%----------------------------------------------------------
\section{Monodromy near a $\PhiSBB'$--fibre} \label{S:fibre-nbd}
%----------------------------------------------------------

The main result of this section is Lemma \ref{L:Gamma-infty}, which imposes strong constraints on the monodromy about a $\PhiSBB'$--fibre $A$.  These constraints play a crucial role in the descent of (a multiple of) the line bundle $\Le$ (Corollary \ref{C:descends}).

%----------------------------------------------------------
\subsection{Reduced limit period map}\label{S:Phi-infty}
%----------------------------------------------------------

This section describes an important relationship between the period map $\Phi_I : Z_I^* \to \Gamma_I\bs D_I$ and the topological boundary $\partial D$ of the period domain in the compact dual $\check D$.  In general, the limit Hodge filtration $F$ associated with a point $b \in Z_I^*$ (\cf\S\ref{S:not-loclift}) will not lie in the boundary.  However, there is a ``na\"ive'', or \emph{reduced limit} $F_\infty(b)$, that does lie in $\partial D$ (\S\ref{S:RLP}).  The limit takes value in a $C_{I,\bR}$--orbit $\cO_I \subset \partial D$, and there is an induced map (defined in \S\ref{S:RLP})
\begin{equation}\label{E:PinftyI}
  \Phi^\infty_I : Z_I^* \ \to \ \Gamma_I\bs \cO_I \,.
\end{equation}

\begin{proposition} \label{P:RvE}
The period map $\Phi_I:Z_I^* \to \Gamma_I \bs D_I$ factors through the reduced limit period map $\Phi_I^\infty: Z_I^* \to \Gamma_I\bs \cO_I$.  Moreover, the map $\Phi^\infty_I$ is locally constant on $\Phi_I$--fibres.
\end{proposition}

\noindent
The proposition is proved in \S\S\ref{S:prfpmrl}--\ref{S:primeh}.

% --------------------------------------------------------
\subsubsection{Definition} \label{S:RLP}
% --------------------------------------------------------

Fix local coordinates $(t,w)$, as in \S\ref{S:not-U}, centered at a point $b \in Z_I^*$.  Let $\tPhi(z,w)$ denote the local lift \eqref{E:tPhi} of $\Phi$.  The \emph{reduced limit period} is
\begin{eqnarray}  \label{E:Finfty}
  F_\infty(w) 
  & = & 
  \lim_{\tIm\,z_j \to \infty} 
  	\tPhi(z,w)
  \\ \nonumber
  & = & \lim_{y \to \infty} \exp(\bi y N) \, \tilde g(0,w) \cdot F
  \quad \in \quad \overline D \,.
\end{eqnarray}
The first limit is taken over all $1 \le j \le k$, with $\tRe\,z_j$ bounded; the second limit is independent of the choice of $N \in \s$, \cite{MR3115136, MR3331177, MR3751295}.  Since $\tilde g(0,w)$ takes value in $C_{I,\bC}$ (\S\ref{S:not-horiz}), we see that 
\begin{equation}\label{E:Fi(w)}
  F_\infty(w) \ = \ \tilde g(0,w) \cdot F_\infty(0) \,.
\end{equation}
In particular, the map $F_\infty : \{0\}\times\Delta^r \to \check D$ is holomorphic, and takes value in the $C_{I,\bC}$--orbit of $F_\infty(0)$.  What is less obvious is that:  
(i) 
the holomorphic $F_\infty (w)$ takes value in the real orbit 
\[
  \cO_I \ = \ C_{I,\bR} \cdot F_\infty(0) \ \subset \ \check D\,.
\]
(ii)
The real orbit $\cO_I$ is open in the (complex) orbit $C_{I,\bC} \cdot F_\infty(0)$, and so is a complex submanifold of $\check D$.  % (In fact, $\cO_I$ is a CR--submanifold of the real orbit $G_\bR \cdot F_\infty(0) \subset \partial D$).
See \cite{MR3444997, MR3331177} for details.  The reduced limit $F_\infty$ is also independent of the local coordinates $(t,w)$ expressing $\tPhi$, and so induces a well-defined map \eqref{E:PinftyI}: 
\begin{equation}\label{E:FinfI}
  \Phi^\infty_I(0,w) \ = \ F_\infty(w) 
  \ = \ \tilde g(0,w) \cdot F_\infty(0)
  \quad\hbox{modulo}\quad \Gamma_I \,.
\end{equation}

%----------------------------------------------------------
\subsubsection{Proof: period map factors through reduced limit} \label{S:prfpmrl}
%----------------------------------------------------------

The two filtrations $F$ and $F_\infty(0)$ are related by the Deligne splitting of \S\ref{S:ds1} \cite[Appendix to Lecture 10]{MR3115136}
\begin{equation}\label{E:FvFinfty}
  F^p \ = \ \bigoplus_{a\ge p} V^{a,b}_{W,F} \tand
  F^p_\infty(0) \ = \ \bigoplus_{b \le \sfn-p} V^{a,b}_{W,F} \,.
\end{equation}
Keeping \eqref{E:ds-conj} in mind, we see that $\tStab_{G_I}(F) \equiv \tStab_{C_{I,\bR}}(F_\infty(0))$ modulo $C^{-1}_{I,\bR}$.  So there is a natural identification 
\[
  D_I \ \simeq \ C^{-1}_{I,\bR} \backslash \cO_I \,.
\]
This identification induces
\begin{equation}\label{E:piJ}
  \pi_I : \Gamma_I\backslash \cO_I \ \to \ \Gamma_I\backslash D_I \,.
\end{equation}
Then the local coordinate expression \eqref{E:FI} for $\Phi_I$, and the local coordinate expression \eqref{E:FinfI} for $\Phi^\infty_I$, yield
\begin{equation}\label{E:factor0I}
  \Phi_I \ = \ \pi_I \circ \Phi^\infty_I \,.
\end{equation}

\begin{remark}
When $D$ is hermitian the map \eqref{E:piJ} is an isomorphism and $\Phi_I = \Phi^\infty_I$.
\end{remark}

% --------------------------------------------------------
\subsubsection{Proof of finiteness: formulation of the argument}\label{S:g=fes}
% --------------------------------------------------------

It is enough to show that $F_\infty(w)$ is constant along the $\Phi_I$--fibres in $\{0\} \times \Delta^r$.  This constancy is a consequence of the infinitesimal period relation (\S\ref{S:not-horiz}).

Recall that $\tilde g(t,w)$ takes value in $\exp(\ff^\perp)$, and $\tilde g(0,w)$ takes value in $\exp(\fc_{I,\bC})$, cf.~\eqref{E:tilde-g2} and \eqref{E:tilde-g3}.  The condition \eqref{E:Nin} implies that $\fc_I \subset \fp_W$ inherits the Deligne splitting \eqref{E:pW}:
\[
  \fc_{I,\bC} \ = \ \bigoplus_{p+q \le 0} \fc_{I,F}^{p,q} \,,\qquad
  \fc_{I,F}^{p,q} \ = \ \fc_{I,\bC} \,\cap\, \fg^{p,q}_{W,F} \,.
\]  
Then we have
\[
  \ff^\perp \,\cap\, \fc_{I,\bC} \ = \ 
  \bigoplus_{\mystack{p<0}{p+q\le0}} \fc_{I,F}^{p,q} \,.
\]  
From \eqref{SE:gpq} and \eqref{E:FvFinfty} we see that the Lie algebra $\ff_\infty$ of the stabilizer $\tStab_{G_\bC}(F_\infty(0))$ is
\begin{equation}\label{E:finfty}
  \ff_\infty \ = \ \bigoplus_{q\le0} \fg^{p,q}_{W,F} \,,
\end{equation}
so that
\[
   \ff^\perp \cap \fc_{I,\bC} \cap \ff_\infty \ = \ 
   \bigoplus_{\mystack{p<0}{q\le0}} \fc_{I,F}^{p,q} \,.
\]
Consider the decomposition 
\[
  \ff^\perp \,\cap\, \fc_{I,\bC} \ = \ \fd \ \op \ \fe \ \op \ 
  (\ff^\perp \cap \fc_{I,\bC} \cap \ff_\infty )
\]
defined by 
\[
  \fd \ = \ \bigoplus_{\mystack{p<0}{p+q=0}} \fc_{I,F}^{p,q}
  \tand
  \fe \ = \ 
  \bigoplus_{\mystack{p<0<q}{p+q<0}} \fc_{I,F}^{p,q} \,.
\]
From \eqref{E:lieb} we see that each of these three summands is a Lie subalgebra of $\ff^\perp\cap\fc_{I,\bC}$.

Since $\ff^\perp\cap\fc_{I,\bC}$ is nilpotent, the function $\tilde g(0,w)$ may be uniquely decomposed as 
\[
  \tilde g(0,w) \ = \ e(w) f(w) s(w)
\]
with $f(w) \in \exp(\fd)$, $e(w) \in \exp(\fe)$ and $s(w) \in \exp(\ff^\perp \cap \fc_{I,\bC} \cap \ff_\infty)$.  Since $\tilde g(0,w) = e(w)\,f(w) s(w) f(w)^{-1} \, f(w)$, and both $e(w)$ and $f(w) s(w) f(w)^{-1}$ take value in the unipotent radical $C_{I,\bC}^{-1}$, we may 
\begin{center}
  identify $\Phi_I(0,w)$ with $f(w)$.
\end{center}
Furthermore, since $\ff_\infty$ is the stabilizer of $F_\infty(0)$ in $\ff$, \eqref{E:Fi(w)} implies we may 
\begin{center}
  identify $F_\infty(w)$ with $e(w)f(w)$.
\end{center}
So to prove the lemma, it suffices to show that
\begin{center}
  $e(w)$ is locally constant along $f$--fibres.
\end{center}
So we assume
\begin{equation} \label{E:de}
  \td f = 0 \,,
\end{equation}
and will show that $\td e = 0$; equivalently,
\begin{equation} \label{E:df}
  e^{-1} \td e \ = \ 0 \,.
\end{equation}

% --------------------------------------------------------
\subsubsection{Proof of finiteness: the IPR}\label{S:primeh}
% --------------------------------------------------------

At $(0,w)$ we have
\begin{eqnarray}
  \nonumber
  \tilde g^{-1} \td \tilde g & = & (efs)^{-1} \td (efs) \\
  \label{E:mcg}
  & = & \tAd_{fs}^{-1}( e^{-1} \td e ) \ + \ 
  \tAd_s^{-1}( f^{-1} \td f ) \ + \ s^{-1} \td s \\
  \nonumber
  & \stackrel{\eqref{E:de}}{=} &
  \tAd_{fs}^{-1}( e^{-1} \td e ) \ + \ s^{-1} \td s \,.
\end{eqnarray}
Note that $e^{-1}\td e$ and $s^{-1}\td s$ take value in $\fe$ and $\ff_\infty$, respectively.  Furthermore, \eqref{E:lieb} and $fs \in \exp(\ff^\perp\cap\fc_{I,\bC})$ imply that 
\[
  e^{-1}\td e \ = \ 0 
  \quad\hbox{if and only if} \quad
  \left(\tAd_{fs}^{-1}( e^{-1} \td e ) \right)^{p,q} \ = \ 0
\]
for all $q>0$ and $p+q < 0$.  At the same time \eqref{E:lieb}, \eqref{E:tilde-g-horiz} and \eqref{E:mcg} imply that
\[
  0 \ = \ (\tilde g^{-1} \td \tilde g)^{p,q} \ = \ 
  \left(\tAd_{fs}^{-1}( e^{-1} \td e ) \right)^{p,q}
\]
for all $q>0$ and $p+q<0$.  The desired \eqref{E:df} now follows, completing the proof of Proposition \ref{P:RvE}.

%----------------------------------------------------------
\subsection{Monodromy about the fibre} \label{S:monpt}
%----------------------------------------------------------

Fix a $\PhiSBB'$ fibre $A \subset \olB$; a coordinate neighborhood $\olU \subset \olB$ centered at a point $a_o \in A$; and a local lift $\tPhi : \widetilde\sU \to D$ of $\Phi|_\sU$, as in \S\ref{S:vhs}.  These choices determine a reduced period limit $F_\infty$, \cf~\eqref{E:Finfty}.  Define
\begin{equation}\label{E:mon-infty}
  \monzero \ = \ \tStab_\Gamma(F_\infty) \,.
\end{equation}

\begin{lemma} \label{L:Gamma-infty}
We may choose the neighborhood $X$ of Corollary \ref{C:top} so that the restriction of the period map $\Phi$ to $U = B \cap X$ lifts to $\Gamma_\infty \bs D$: there is a holomorphic map $\Phipt : U \to \monzero\bs D$ so that the diagram 
\begin{equation}\label{E:Pfibre0}
\begin{tikzcd}
  & \monzero \bs D \arrow[d] \\
  U \arrow[r,"\Phi"'] \arrow[ru,"\Phipt"]
  & \Gamma \bs D
\end{tikzcd}
\end{equation}
commutes.
\end{lemma}

Informally, we say \emph{the monodromy near $A$ takes value in $\monzero$}.

\begin{proof}
Given any point $a \in A$, fix a coordinate neighborhood $\olU_a \subset \olB$ centered at $a$, and a local lift  $\tPhi_a : \widetilde\sU_a \to D$ of $\Phi|_{\sU_a}$, as in \S\ref{S:vhs}.  Let $F_\infty(a)$ be the associated reduced period limit (\S\ref{S:RLP}).  Since the fibre $A$ is connected, Proposition \ref{P:RvE} and \eqref{E:FinfI} imply that the $\Gamma$--congruence class of $F_\infty(a)$ is independent of our choice of $a \in A$. So $F_\infty(a) = \gamma_a\cdot F_\infty$ for some $\gamma_a \in \Gamma$.  Replacing $\tPhi_a$ with $\gamma_a^{-1}\cdot\tPhi_a$, we may assume that $F_\infty(a) = F_\infty$ for all $a \in A$.  This determines the lift $\tPhi_a$ up to the action of $\monzero = \tStab_\Gamma(F_\infty)$.  

Define $Y = \cup_{a \in A}\,\olU_a$.  Since the local lifts are defined up to the action of $\Gamma_\infty$, we can patch the local lifts $\{\tPhi_a : \widetilde\sU_a \to D \}_{a \in A}$ together to define a map $\Psi : U = B \cap Y \ \to \ \monzero \bs D$, through which the restriction $\left.\Phi\right|_U$ factors.  By Lemma \ref{L:top}, we may choose $X \subset Y$.  Take $\Phipt = \Psi|_X$.
\end{proof}

\begin{remark}[Period matrix representation] \label{R:Sp}
The local lifts $\tPhi_a$ all take value in the open Schubert cell $\sS \subset \check D$ defined in \eqref{E:Sp}.  Since $\Gamma \subset G_\bR$, \eqref{E:mon-infty} implies
\begin{equation}\label{E:mon-infty-bar}
  \Gamma_\infty \ = \ \tStab_\Gamma(\overline{F_\infty}) \,.  
\end{equation}
It follows from the definition of $\sS$ that the action of $\Gamma_\infty$ on $\check D$ preserves $\sS$.  Consequently, the action of $\Gamma_\infty$ preserves $\sS \cap D$, and the map $\Phipt$ of \eqref{E:Pfibre0} takes value in $\monzero\bs (\sS \cap D)$.

The Schubert cell $\sS$ is biholomorphic to $\ff^\perp \simeq \bC^d$, with $d = \tdim\,D$ (\S\ref{S:schubert}).  So we may think of $\Phipt$ as giving us a (multi-valued) coordinate representation of $\left.\Phi\right|_U$ (defined up to the action of $\monzero$).  We call $\Phipt$ the \emph{period matrix representation of $\Phi|_U$}.
\end{remark}

\begin{remark} \label{R:int}
From \eqref{E:FvFinfty}, \eqref{E:mon-infty} and \eqref{E:mon-infty-bar} we see that the action of $\monzero$ on $V_\bC$ preserves the subspaces 
\begin{equation}\label{E:infty-intersection}
  F^{\sfn-p}_\infty \, \cap\, \overline{F^{\sfn-q}_\infty} \ = \ 
  \bigoplus_{\substack{a \le q \\ b \le p}}
  V^{a,b}_{W,F} \,;
\end{equation}
with the equality above a consequence of \eqref{E:ds-conj}.  Since \eqref{E:ds-WF} implies
\[
  W_\ell(V_\bC) \ = \ \sum_{p+q \le \ell} 
  (F^{\sfn-p}_\infty \, \cap\, \overline{F^{\sfn-q}_\infty}) \,,
\]
it follows that $\monzero$ preserves $W$:
\begin{equation}\label{E:monW}
  \monzero \ \subset \ P_{W,\bQ} \,.
\end{equation}
\end{remark}

%----------------------------------------------------------
\section{The descent of $\Le^{\ot\sfm}$ to $\olP'$} \label{S:descent}
%----------------------------------------------------------

Set $\sfk = \lceil (\sfn+1)/2 \rceil$.  Let $\cFe^p \to \olB$ denote Deligne's extension \cite{MR1416353} of the Hodge vector bundles $\cF^p \to B$.  Define 
\[
  \Le \ = \ \left\{
    \begin{array}{ll}
    \tdet(\cFe^\sfn) \,=\, \tdet(\cFe^1) \,,\quad & \sfn=1 \,,\\
    \tdet(\cFe^\sfn) \ot \tdet(\cFe^{\sfn-1}) \ot \cdots \ot 
    	\tdet(\cFe^\sfk) \,,\quad & 
    \sfn \hbox{ even,} \\
    \left[\tdet(\cFe^\sfn) \ot \tdet(\cFe^{\sfn-1}) 
    \ot \cdots \ot \tdet(\cFe^{\sfk+1}) \right]^{\ot2} \, 
    	\ot \tdet(\cFe^\sfk)\, \,, \  &
    \sfn \ge 3 \hbox{ odd.}
    \end{array}
  \right.
\]
Let $X \subset \olB$ the a neighborhood of the $\PhiSBB'$--fibre $A$ given by Lemma \ref{L:Gamma-infty}.

\begin{theorem} \label{T:triv}
There exists $\sfm>0$ so that the power $\Le^{\ot\sfm}$ is trivial over $X$.  If $\Gamma$ is neat, then we make take $\sfm=1$.
\end{theorem}

\begin{corollary}\label{C:descends}
There exist $\sfm>0$ so that the line bundle $\Le^{\ot\sfm}$ descends to $\olP'$.  If $\Gamma$ is neat, then we make take $\sfm=1$.
\end{corollary}

\begin{proof}[Proof of Corollary \ref{C:descends}]
Recall that $X$ is the $\PhiSBB'$--preimage of an open set $\sX \subset \olP'$ (Corollary \ref{C:top}).  It follows from Theorem \ref{T:triv} that $\Le^{\ot\sfm}$ descends to a line bundle on $\olP'$ that is trivial over $\sX$.  
\end{proof}

The remainder of \S\ref{S:descent} is devoted to the proof of Theorem \ref{T:triv}.  In outline, the argument proceeds as follows: in \S\ref{S:triv-det} the theorem is reduced to showing that a certain character $\chi_\infty : \monzero \to \bC^*$ has the property that $\chi_\infty(\gamma)$ is an $\sfm$-th root of unity, for all $\gamma \in \monzero$ (Remark \ref{R:chibeta}).  Here $\bC^* = \bC-\{0\}$ is the group of nonzero complex numbers.  We first show that $\chi_\infty(\gamma)$ has norm one, for all $\gamma \in \monzero$ (Theorem \ref{T:normone}, proved in \S\ref{S:prf-normone}), and then show that these values are $\sfm$-th roots of unity, for some $0 < \sfm \in \bZ$ (Theorem \ref{T:chi-infty}, proved in \S\ref{S:prf-rtu}).

%----------------------------------------------------------
\subsection{Introducing the character $\chi_\infty$} \label{S:triv-det}
%----------------------------------------------------------

Let $\widetilde U \to U$ be the universal cover of $U = B \cap X$, and $\sS$ the Schubert cell of Remark \ref{R:Sp}.  Then the map $\Phipt$ of \eqref{E:Pfibre0} lifts to the universal cover to yield a commutative diagram
\[
\begin{tikzcd}
  \widetilde U \arrow[r,"\tPhi_A"] \arrow[d]
  & \sS \cap D \arrow[d] \\
  U \arrow[r,"\Phipt"'] & \monzero \bs (\sS \cap D) \,.
\end{tikzcd}
\]  
The map $\exp(\ff^\perp) \to \exp(\ff^\perp) \cdot F = \sS$ is a biholomorphism (\S\ref{S:schubert}).  So there is a uniquely determined holomorphic map
\begin{subequations}\label{E:g-pmr}
\begin{equation}
  g : \widetilde U \ \to \ \exp(\ff^\perp) \ \subset \ G_\bC
\end{equation}
so that 
\begin{equation}
  \tPhi_A(\z) \ = \ g(\z) \cdot F \,.
\end{equation}
\end{subequations}

Fix a limiting mixed Hodge structure $(W,F)$ arising along $A$ as in \S\ref{S:not-LMHS}.  Let $d_p = \tdim\,F^p = \mathrm{rank}\,\cF^p$, and set
\[
  H \ = \ 
    \left\{
    \begin{array}{ll}
    \tw^{d_\sfn} V \,=\, \tw^{d_1}V \,,\quad & \sfn=1 \,,\\
    (\tw^{d_\sfn}V) \ot (\tw^{d_{\sfn-1}}V) \ot \cdots \ot 
    	(\tw^{d_\sfk}V) \,,\quad & 
    \sfn \hbox{ even,} \\
    \left[(\tw^{d_\sfn}V) \ot (\tw^{d_{\sfn-1}}V) \ot \cdots \ot 
    	(\tw^{d_{\sfk-1}}V) \right]^{\ot2} \, 
    	\ot (\tw^{d_\sfk}V)\, \,, \  &
    \sfn \ge 3 \hbox{ odd.}
    \end{array}
  \right.
\]
Each $\tw^{d_p}F^p$ is a line in $\tw^{d_p}V$, and 
\[
  L \ = \ \left\{
    \begin{array}{ll}
    \tw^{d_\sfn}F^\sfn \,=\, \tw^{d_1}F^1 \,,\quad & \sfn=1 \,,\\
    (\tw^{d_\sfn}F^\sfn) \ot (\tw^{d_{\sfn-1}}F^{\sfn-1}) 
    \ot \cdots \ot (\tw^{d_\sfk}F^\sfk) \,,\quad & 
    \sfn \hbox{ even,} \\
    \left[(\tw^{d_\sfn}F^\sfn) \ot (\tw^{d_{\sfn-1}}F^{\sfn-1})  
    \ot \cdots \ot (\tw^{d_{\sfk+1}}F^{\sfk+1}) \right]^{\ot2} \, 
    	\ot (\tw^{d_\sfk}F^\sfk)\, \,, \  &
    \sfn \ge 3 \hbox{ odd.}
    \end{array}
  \right.
\]
is a line in $H_\bC$.  Fix nonzero $\lambda \in L$ , let $\sfm$ be any positive integer and define
\[
  f : \widetilde U \to \ H_\bC^{\ot \sfm}
\]
by
\[
  f(\zeta) \ = \ g(\zeta) \cdot \lambda^\sfm \,.
\]

\begin{remark} \label{R:fgamma}
If 
\begin{equation}\label{E:fgamma}
  f(\zeta \cdot \gamma) \ = \ \gamma^{-1} \cdot f(\zeta) \,,
  \quad \forall \ \zeta \in \widetilde U \,,\ 
  \gamma \in \monzero \,,
\end{equation}
then $f$ defines a section of $\Lambda^\sfm \to U$.  This section will define a trivialization of $\Le^\sfm$ over $X$, by essentially the same arguments as in \cite{MR1416353}.  So to prove the theorem, it suffices to establish \eqref{E:fgamma} for some choice of $1 \le \sfm \in \bZ$.
\end{remark} 

%----------------------------------------------------------
\subsubsection{Decomposing monodromy} \label{S:decomps}
%----------------------------------------------------------

From \eqref{E:mon-infty} and \eqref{E:mon-infty-bar} we see that
\begin{subequations} \label{SE:monzero}
\begin{equation}
  \monzero \ \subset \ S \,,
\end{equation}
where
\begin{equation}  
  S \ = \ 
  \tStab_{G_\bC}(F_\infty) \,\cap\, 
  \tStab_{G_\bC}(\overline{F}_\infty) \,.
\end{equation}
\end{subequations}
The proof of the theorem will make use of a decomposition \begin{equation}\label{E:Sinfty}
  S \ = \ S^{-1}_\infty \,\rtimes\, S^0_\infty
\end{equation}
that is induced by a natural decomposition of the parabolic group $\tStab_{G_\bC}(\overline{F_\infty})$, \cite[Theorem 3.1.3]{MR2532439}.  Specifically 
\[
  S^{-1}_\infty \ = \ \{ g \in S \ | \ g \hbox{ acts trivially on }
  \overline F{}^q_\infty/\overline F{}^{q+1}_\infty \,,
  \ \forall \ q \} \ = \ S \,\cap\, \exp(\ff^\perp)
\]
and 
\[
   S^0_\infty \ = \ S \,\cap\,\tStab_{G_\bC}(F) \,.
\]

%----------------------------------------------------------
\subsubsection{Monodromy action}
%----------------------------------------------------------

We have 
\[
  \tPhi_A(\zeta\cdot\gamma) \ = \ 
  \gamma^{-1} \cdot \tPhi_A(\zeta) \,;
\]
equivalently, 
\[
  g(\zeta\cdot\gamma) \cdot F \ = \ \gamma^{-1} g(\z) \cdot F \,.
\]
However, while $\gamma^{-1}$ preserves the Schubert cell $\sS$, it need not be an element of $\exp(\ff^\perp)$.  So we cannot assert that $g(\zeta\cdot\gamma)=\gamma^{-1} g(\z)$.  

In order to determine $g(\zeta\cdot\gamma)$ we factor the monodromy using \eqref{E:Sinfty}.  Write
\[
  \gamma^{-1} \ = \ \a\,\b \,, 
\]
with $\beta \in S^0_\infty$ and $\a \in S^{-1}_\infty$.  Then the action of $\gamma^{-1}$ on $g \cdot F \in \sS$ is given by 
\begin{equation}\label{E:gamma-act}
  \gamma^{-1} g \cdot F \ = \ \a\b g \cdot F \ = \ 
  \a\,\b g \b^{-1}\,\b\cdot F \ = \ \a\,(\b g \b^{-1}) \cdot F \,.
\end{equation}
Noting that $\b g \b^{-1} \in \exp(\ff^\perp) = S^{-1}_\infty$, this implies that 
\[
  g(\zeta\cdot\gamma) \ = \ \a\,\b g(\zeta) \b^{-1} \,.
\]
Since $\b^{-1}$ preserves $F$, it stabilizes the line $L$.  So there is a group homomorphism 
\[
  \chi : S^0_\infty \ \to \ \bC^* \,=\, \bC\bs\{0\}
\]
such that 
\[
  \beta^{-1}(\lambda) \ = \ \chi(\beta^{-1}) \,\lambda \,.
\]

\begin{remark} \label{R:chibeta}  %%% REVISION
If every $\chi(\beta^{-1})$ is an $\sfm$--root of unity, then \eqref{E:fgamma} will hold.  We will show that there is a group homomorphism $\chi_\infty : \monzero \to S^1 \subset \ \bC^*$ such that $\overline{\chi_\infty(\gamma)} = \chi(\beta)$, and each $\chi_\infty(\gamma)$ is a root of unity (Lemma \ref{L:chi1}, and Theorems \ref{T:normone} and \ref{T:chi-infty}.  Since $\monzero$ is contained in an arithmetic group, and every arithmetic group contains a neat subgroup of finite index, it follows that there exists a choice of $\sfm$ so that the elements of $\chi_\infty(\monzero) \subset S^1$ are all $\sfm$--th roots of unity.  This will establish Theorem \ref{T:triv} (cf.~Remark \ref{R:fgamma}).
\end{remark}

%----------------------------------------------------------
\subsubsection{The character $\chi_\infty$} \label{S:chi-infty}
%----------------------------------------------------------

Because $\monzero$ stabilizes $F_\infty^p \subset V$, the line
\[
  L_\infty \ = \ \left\{
    \begin{array}{ll}
    \tw^{d_\sfn}F_\infty^\sfn \,=\, 
    \tw^{d_1}F_\infty^1 \,,\quad & \sfn=1 \,,\\
    (\tw^{d_\sfn}F_\infty^\sfn) \ot 
    (\tw^{d_{\sfn-1}}F_\infty^{\sfn-1}) 
    \ot \cdots \ot (\tw^{d_\sfk}F_\infty^\sfk) \,,\quad & 
    \sfn \hbox{ even,} \\
    \left[(\tw^{d_\sfn}F_\infty^\sfn) \ot 
    (\tw^{d_{\sfn-1}}F_\infty^{\sfn-1})  
    \ot \cdots \ot 
    (\tw^{d_{\sfk+1}}F_\infty^{\sfk+1}) \right]^{\ot2} \, 
    	\ot (\tw^{d_\sfk}F_\infty^\sfk)\, \,, \  &
    \sfn \ge 3 \hbox{ odd}
    \end{array}
  \right.
\]
is an eigenline for the action of $\monzero$ on $H_\bC$.  Let 
\[
  \chi_\infty : \monzero \ \to \ \bC^*
\]
be the associated character.  

\begin{lemma} \label{L:chi1}
We have $\chi(\beta) = \overline{\chi_\infty(\gamma)}$.
\end{lemma}

\noindent  The lemma is proved in \S\ref{S:prf-chi1}.  First we need to review the induced mixed Hodge structure on $H$.

%----------------------------------------------------------
\subsection{Induced mixed Hodge structure on $H$} \label{S:indH}
%----------------------------------------------------------

Any limiting mixed Hodge structure $(Q,W,F,N)$ on $V$ naturally induces one on $H$.  Because these induced structures play an important role in the several of the arguments that follow, we review their definition and properties here.

It will be convenient to use $H \subset V^{\ot d}$, where
\[
  d \ = \ \left\{ \begin{array}{ll}
    d_\sfn = d_1 \,,\quad & \sfn=1 \,,\\
    d_\sfn + d_{\sfn-1}+\cdots + d_\sfk \,,\quad 
    & \sfn \hbox{ even,}\\
    2\,(d_\sfn + d_{\sfn-1} + \cdots + d_{\sfk+1}) + d_\sfk \,,
    \quad & \sfn \ge 3 \hbox{ odd.}
  \end{array} \right.
\]

%----------------------------------------------------------
\subsubsection{Induced polarization}  
%----------------------------------------------------------

The polarization $Q$ of $V$ naturally induces a nondegenerate $(-1)^{\sfn d}$--symmetric bilinear form on $V^{\ot d}$, also denoted $Q$, and given by 
\begin{equation} \label{E:indQ}
  Q(u_1 \ot \cdots \ot u_d \,,\, 
  	v_1 \ot \cdots \ot v_d) \ = \ 
  Q(u_1,v_1) \cdots Q(u_d,v_d) \,, 
\end{equation}
for all $u_i,v_i \in V$.  The restriction of this bilinear form to $H \subset V^{\ot d}$ is nondegenerate.

%----------------------------------------------------------
\subsubsection{Induced mixed Hodge structure}  \label{S:ind-MHS}
%----------------------------------------------------------

A mixed Hodge structure $(W,F)$ on $V$ induces one on $V^{\ot d}$.  The weight and Hodge filtrations are given by
\begin{subequations}\label{SE:ind-MHS}
\begin{equation}\label{E:Wten}
  W_\ell(V^{\ot d}) \ = \ 
  \sum_{\ell_1+\cdots+\ell_d \le \ell}
  W_{\ell_1}(V) \ot \cdots \ot W_{\ell_d}(V)
\end{equation}
and
\begin{equation}\label{E:Ften}
  F^p(V^{\ot d}_\bC) \ = \ 
  \sum_{p_1+\cdots+ p_d \ge p}
  F^{p_1}(V_\bC) \ot \cdots \ot F^{p_d}(V_\bC) \,.
\end{equation}
The subspace $H \subset V^{\ot d}$ is naturally a mixed Hodge substructure.  Explicitly
\begin{equation}
  W_\ell(H) \ = \ H \,\cap\, W_\ell(V^{\ot d}) 
  \tand
  F^p(H_\bC) \ = \ H_\bC \,\cap\, F^p(V^{\ot d}) \,.
\end{equation}
\end{subequations}
From \eqref{SE:QW}, \eqref{E:indQ} and \eqref{SE:ind-MHS} we see that both the induced weight and Hodge filtrations are $Q$--isotropic
\begin{subequations} \label{SE:indQWF}
\begin{eqnarray}
  \label{E:indQW}
  Q(W_k(H),W_\ell(H)) & = & 0 \,,\quad 
  \forall \ k + \ell < 2 \sfw \,,\\
  \label{E:indQF}
  Q(F^p(H_\bC),F^q(H_\bC)) & = & 0 \,,\quad 
  \forall \ p+q > \sfw \,.
\end{eqnarray}
\end{subequations}

%----------------------------------------------------------
\subsubsection{Induced nilpotent operator} \label{S:indN}
%----------------------------------------------------------

The nilpotent operator $N : V \to V$ induces a nilpotent $N : V^{\ot d} \to V^{\ot d}$ by 
\begin{eqnarray*}
  N(v_1 \ot \cdots \ot v_d) & = & 
  (Nv_1) \ot v_2 \ot \cdots \ot v_d 
  \ + \ v_1 \ot (Nv_2) \ot v_3 \ot \cdots \ot v_d \\
  & & + \cdots + \ v_1 \ot \cdots v_{d-1} \ot (Nv_d) \,.
\end{eqnarray*}
This operator preserves the subspace $H \subset V^{\ot d}$.

If the mixed Hodge structure $(V,Q;W,F)$ is polarized by $N : V \to V$, then the mixed Hodge structure $(V^{\ot d},Q;W,F)$ is polarized by $N : V^{\ot d} \to V^{\ot d}$.  In particular
\begin{equation}\label{E:HN(WF)}
  N(W_\ell(H)) \ \subset \ W_{\ell-2}(H)
  \tand N(F^p(H_\bC)) \ \subset \ F^{p-1}(H_\bC) \,.
\end{equation}  
In this case, the reduced limit period is
\begin{subequations} \label{SE:Hinfty}
\begin{equation}
  F_\infty^p(V^{\ot d}) \ = \ 
  \sum_{p_1+\cdots+ p_d \ge p}
  F_\infty^{p_1}(V) \ot \cdots \ot F_\infty^{p_d}(V) \,.
\end{equation}
Likewise, the mixed Hodge substructure $(H,Q;W,F)$ is polarized by $N : H \to H$, and
\begin{equation}
  F^p_\infty(H) \ = \ H \cap F^p_\infty(V^{\ot d}) \,.
\end{equation}
\end{subequations}

If $(V,W,F)$ is a pure, weight $\sfn$, $Q$--polarized Hodge structure (that is, $W$ is trivial, and $F \in D$), then the induced $(V^{\ot d} , W , F)$ is also a pure, polarized Hodge structure of weight 
\[
  \sfw \ = \ \sfn \,d \,,
\]
and $H$ is a polarized Hodge substructure.
  
%----------------------------------------------------------
\subsubsection{Numerology}
%----------------------------------------------------------
  
Define an integer $\sfa \le \sfw$ by the conditions
\begin{equation} \label{E:a}
  F^\sfa(H_\bC) \ = \ L
  \tand 
  F^{\sfa+1}(H_\bC) \ = \ 0 \,.
\end{equation}

\begin{remark}
If $\sfn=1$ then $\sfa = d_1 = \sfw$.  Likewise, if $\sfn=2$, then $\sfa = 2 d_2 = \sfw$.  In general, $\sfa < \sfw$.  For example, if $\sfn=3$, then $\sfa = 7 d_3 + 2 d_2 \le 6 d_3 + 3 d_2 = \sfw$; the inequality if strict if and only if $d_3 < d_2$.  (In general $d_3 \le d_2$.)  Similarly, if $\sfn=4$, then $\sfa = 5 d_4 + 3 d_3 \le 4(d_4 + d_3) = \sfw$; the inequality is strict if $d_4 < d_3$.  (In general $d_4 \le d_3$.)
\end{remark}

Since $L$ is one-dimensional, \eqref{E:a} implies that there exists $\sfb \le \sfw$ so that the Deligne splitting $H_\bC = \op\,H^{p,q}_{W,F}$ satisfies
\begin{subequations}\label{SE:dsL}
\begin{equation}\label{E:dsL}
  L \ = \ F^\sfa(H_\bC) \ = \ H^{\sfa,\sfb}_{W,F} \,.
\end{equation}
Consequently,
\begin{equation}\label{E:dsLinfty}
  L_\infty \ = \ F_\infty^\sfa(H_\bC) \ = \ 
  H^{\sfw-\sfb,\sfw-\sfa}_{W,F} \,.
\end{equation}
\end{subequations}

\begin{remark} \label{R:ab}
It may be helpful to use the Hodge diamond of the mixed Hodge structure on $H$ to visualize the defining conditions of $\sfa,\sfb$, cf.~Figure \ref{fig:hd1}.  (The Hodge diamond is configuration of nodes at integer points in the $pq$-plane; a node is placed at the point $(p,q) \in \bZ^2$ if $H^{p,q}_{W,F}\not=0$.  The integers $\sfa,\sfb$ are defined so that the Hodge diamond is contained in the region $\sfw-\sfa \le p,q \le \sfa$, and there is a single node -- a single nonzero $H^{p,q}_{W,F}$ -- along each of the four boundary edges $r,s = \sfa , \sfw-\sfa$.)
\end{remark}

\begin{figure}
\begin{tikzpicture}[baseline=(current  bounding  box.center),scale=0.8]
  \draw [gray,dashed] (-0.5,6.5) --  (6.5,-0.5);
  \node [gray,above right] at (6.5,-0.5) {$p+q = \sfw$};
  \draw [gray] (0,0) -- (6,0) -- (6,6) -- (0,6) -- (0,0);
  \draw [gray] (1,1) -- (5,1) -- (5,5) -- (1,5) -- (1,1);
  \foreach \x in {1,...,5}
  {
  	\foreach \y in {1,...,5}
	{
	  	\draw [fill,gray] (\x,\y) circle [radius=0.07];
	}
  }
  \draw [fill] (3,0) circle [radius=0.08];
  \node [below left] at (3,0) {$(\sfw-\sfb,\sfw-\sfa)$};
  \node [above left] at (3,0) {$L_\infty$};
  \draw [fill] (0,3) circle [radius=0.08];
  \node [below left] at (0,3) {$(\sfw-\sfa,\sfw-\sfb)$};
  \node [above left] at (0,3) {$\overline{L_\infty}$};
  \draw [fill] (6,3) circle [radius=0.08];
  \node [below right] at (6,3) {$(\sfa,\sfb)$};
  \node [above right] at (6,3) {$L$};
  \draw [fill] (3,6) circle [radius=0.08];
  \node [below right] at (3,6) {$(\sfb,\sfa)$};
  \node [above right] at (3,6) {$\overline{L}$};
%  \node [gray] at (5.7,2.3) {$N$};
%  \draw [gray,-to] (5.8,2.8) -- (5.2,2.2);
%  \draw [gray,-to] (4.8,1.8) -- (4.2,1.2);
%  \draw [gray,-to] (3.8,0.8) -- (3.2,0.2);
\end{tikzpicture}
\label{fig:hd1}
\caption{Hodge diamond of the MHS on $H$.}
\end{figure}

%----------------------------------------------------------
\subsection{Proof of Lemma \ref{L:chi1}}  \label{S:prf-chi1}
%----------------------------------------------------------

Let $H_\bC = \op\,H^{p,q}_{W,F}$ be the Deligne splitting. The analog of \eqref{E:ds-Q} for the induced limiting mixed Hodge structure on $H$ is 
\begin{equation}\label{E:ds-indQ}
  Q\left(H^{p,q}_{W,F} \,,\, H^{r,s}_{W,F}\right) 
  \ = \ 0 \,, \quad\hbox{for all}\quad (p+r,q+s) 
  \not= (\sfw,\sfw) \,.
\end{equation}
From \eqref{SE:dsL} and \eqref{E:ds-indQ} we see that
\[
  H_\bC \ = \ 
  L^\perp \ \op \ \overline{L_\infty} 
  \ = \ \overline{L_\infty}{}^\perp \ \op \ L\,.
\]
In particular, given $0\not=\mu \in L_\infty$, the pairing $Q( \lambda \,,\, \bar\mu )$ is nonzero.  Since the polarization $Q$ is invariant under $\Gamma$, we have
\[
  Q(\gamma^{-1}\cdot\lambda \,,\, \bar\mu) 
  \ = \ Q(\lambda \,,\, \gamma\cdot\bar\mu )
  \ = \ \overline{\chi_\infty(\gamma)}\,Q(\lambda \,,\, \bar\mu) \,.
\]
On the other hand, since $\tdim\,\overline{L_\infty}=1$, the unipotent $\a \in S^{-1}_\infty$ necessarily acts trivially on $\overline{L_\infty}$, so that we have $Q(\a\cdot\lambda , \bar \mu) = Q(\lambda,\a^{-1}\cdot\bar\mu) = Q(\lambda,\bar\mu)$.  In particular, 
\[
  Q(\gamma^{-1}\cdot\lambda \,,\, \bar\mu) \ = \ 
  Q(\a\b\cdot\lambda \,,\, \bar \mu)
  \ = \ 
  \chi(\beta)\,Q(\a\cdot\lambda \,,\, \bar \mu)
  \ = \ 
  \chi(\beta)\,Q(\lambda \,,\, \bar \mu) \,.
\]
Thus $\chi(\beta)= \overline{\chi_\infty(\gamma)}$.
\hfill\qed

\begin{theorem} \label{T:normone}
The character $\chi_\infty : \monzero \to \bC^*$ takes value in the unit circle $S^1 \subset \bC^*$.
\end{theorem}

\noindent The theorem is proved in \S\ref{S:prf-normone}.

\begin{theorem} \label{T:chi-infty}  %%% NEW
Given $\gamma \in \monzero$, the eigenvalue $\chi_\infty(\gamma)$ is a root of unity.
\end{theorem}

\noindent This will establish Theorem \ref{T:triv}, cf.~Remark \ref{R:chibeta}.  Theorem \ref{T:chi-infty} is proved in \S\ref{S:prf-rtu}.

%----------------------------------------------------------
\section{Proof of Theorem \ref{T:normone}} \label{S:prf-normone}
%----------------------------------------------------------

%----------------------------------------------------------
\subsection{Decomposing $\chi_\infty$} 
%----------------------------------------------------------

Since the action of $\monzero$ on $V_\bC$ preserves the subspaces $F^p_\infty \subset V_\bC$, \cf~\eqref{E:mon-infty},  we may define a character $\chi^p_\infty : \monzero \to \bC^*$ by
\[
  \chi^p_\infty(\gamma) \ = \ \tdet\{ 
  	\gamma : F^p_\infty \ \to \ F^p_\infty \} \,.
\]
It follows from the definitions of $L_\infty$ and $\chi_\infty:\monzero \to \bC^*$, cf.~\S\ref{S:chi-infty}, that 
\begin{equation}\label{E:chi-infty}
  \chi_\infty \ = \ 
    \left\{
    \begin{array}{ll}
    \chi_\infty^\sfn \,=\, \chi_\infty^1 \,,\quad & \sfn=1 \,,\\
    \chi_\infty^\sfn\,\chi_\infty^{\sfn-1} \cdots
    	 \chi_\infty^\sfk \,,\quad 
	& \sfn = 2\sfk-2 \hbox{ even,} \\
	(\chi_\infty^\sfn\,\chi_\infty^{\sfn-1} \cdots
    	 \chi_\infty^{\sfk+1})^2\,\chi_\infty^\sfk \,, \quad  &
    \sfn = 2\sfk-1 \ge 3 \hbox{ odd.}
    \end{array}
  \right.
\end{equation}

By \eqref{E:monW}, the action of $\monzero$ on $V$ preserves the subspaces $W_\ell(V)$.  In particular, there is an induced action of $\monzero$ on $\tGr^W_\ell(V) = W_\ell(V)/W_{\ell-1}(V)$.  Since $(W,F)$ is a mixed Hodge structure, $F$ induces a Hodge decomposition 
\begin{equation} \label{E:GrW(V)pq}
  \tGr^W_\ell(V_\bC) \ = \
  \bigoplus_{p+q=\ell} \tGr^W_\ell(V)^{p,q} \,.
\end{equation}

\begin{lemma} \label{L:Gr-pq-preserved}
The action of $\monzero$ on $\tGr^W_\ell(V_\bC)$ preserves the Hodge summands $\tGr^W_\ell(V)^{p,q} \simeq V^{p,q}_{W,F}$.
\end{lemma}

\begin{proof}
This follows from \eqref{E:ds-WF} and Remark \ref{R:int}.
\end{proof}

Lemma \ref{L:Gr-pq-preserved} implies that we may define a character $\chi^{p,q} : \monzero \to \bC^*$ by
\[
  \chi^{p,q}(\gamma) \ = \ \tdet\{ 
  	\gamma : \tGr^W_\ell(V)^{p,q} \ \to \ \tGr^W_\ell(V)^{p,q} 
	\} \,.
\]
Then it follows from the second equality of \eqref{E:FvFinfty} that
\begin{equation}\label{E:chi-p}
  \chi_\infty^p \ = \ 
  \prod_{\substack{0\le a \le \sfn \\ 0 \le b \le \sfn-p}}
  \chi^{a,b} \,.
\end{equation}
Since $\overline{\tGr^W_\ell(V)^{p,q}} = \tGr^W_\ell(V)^{q,p}$, we have
\begin{subequations} \label{SE:pq-ids}
\begin{equation}
  \overline{\chi^{p,q}} \ = \ \chi^{q,p} \,.
\end{equation}
Similarly, \eqref{E:QW*}, \eqref{E:ds-WF} and \eqref{E:ds-Q} yield $(\tGr^W_\ell(V)^{p,q})^\vee \simeq \tGr^W_\ell(V)^{\sfn-p,\sfn-q}$, so that 
\begin{equation}
  (\chi^{p,q})^{-1} \ = \ \chi^{\sfn-p,\sfn-q} \,.
\end{equation}
\end{subequations}
If $p+q = \sfn$, then these two equations imply $(\chi^{p,q})^{-1} = \overline{\chi^{p,q}}$; in particular,
\begin{subequations} \label{SE:pqn-ids}
\begin{equation}
  |\chi^{p,q}| \ = \ 1 \,,\quad \forall \ p+q = \sfn \,.
\end{equation}
More generally, we have 
\begin{equation}
  |\chi^{p,q}\,\chi^{\sfn-p,\sfn-q}| \ = \ 
  |\chi^{p,q}\,\chi^{\sfn-q,\sfn-p}| \ = \ 1 \,,\quad 
  \forall \ p,q\,. 
\end{equation}
\end{subequations}

%----------------------------------------------------------
\subsection{An arithmetic observation} 
%----------------------------------------------------------

Since the weight filtration $W$ of $V$ is rational, the parabolic subgroup $P_W = \tStab_G(W)$ is a $\bQ$--algebraic group.  Let $U_W \subset P_W$ be the unipotent radical, and $L_W = P_W/U_W$ the Levi quotient.  Both $U_W$ and $L_W$ are $\bQ$--algebraic groups.  The Levi quotient is reductive, and $L_{W,\bZ} = P_{W,\bZ}/U_{W,\bZ}$ is an arithmetic subgroup, \cite[Theorem 1.2]{MR0204533}.  The group $L_W$ naturally acts on $\tGr^W_\ell(V)$, and the determinant of this action defines a character $\chi_\ell : L_W \to \bC^*$.

\begin{lemma} \label{L:chi-ell}
If $g \in L_{W,\bZ}$, then $\chi_\ell(g) \in \{ \pm 1\}$.
\end{lemma}

\begin{proof}
Since $W$ is $Q$--isotropic, \cf~\eqref{SE:QW}, we have 
\[
  \chi_{2\sfn-\ell} \ = \ \chi_\ell^{-1} \,.
\]
So it suffices to consider $\ell \ge \sfn$.  In the case that $\ell = \sfn$, we have $\chi_\sfn = \chi_\sfn^{-1}$.  So it suffices to consider the case $\ell > \sfn$.

As a $\bQ$--algebraic group, we have  
\[
  L_W \ \simeq \ \tGL(\tGr^W_\sfn(V),Q) \ \times \ 
  \prod_{\ell = \sfn+1}^{2 \sfn} \tGL(\tGr^W_\ell(V)) \,.
\]
Fix $\ell > \sfn$, and consider the projection $\pi_\ell : L_W \to \tGL(\tGr^W_\ell(V))$.  We have $\tGL(\tGr^W_\ell(V_\bQ)) \simeq \tGL_{m_\ell}(\bQ)$, where $m_\ell = \tdim\tGr^W_\ell(V)$.  The character $\chi_\ell(g)$ is the determinant of the matrix representation of $\pi_\ell(g)$, where $g \in L_W$.  The image $\pi_\ell(L_{W,\bZ})$ is an arithmetic subgroup of $\tGL(\tGr^W_\ell(V_\bQ))$, \cite[Theorem 1.2]{MR0204533}.  This implies that the image $\chi_\ell(L_{W,\bZ})$ is finite, and lies in $\bQ$.
\end{proof}

We may naturally regard $\chi_\ell$ as a character of $P_W$.  When restricted to $\monzero \subset P_{W,\bZ}$, \eqref{E:GrW(V)pq} and Lemma \ref{L:chi-ell} imply that 
\begin{equation} \label{E:chi-ell}
   \chi_\ell \ = \ \prod_{p+q = \ell} \chi^{p,q} \ : \ 
   \monzero \ \to \ \{\pm1\} \,.
\end{equation}
Simple consequences of \eqref{SE:pq-ids} and \eqref{E:chi-ell} include:
%\begin{subequations}\label{SE:chi}
\begin{eqnarray*}
  %\nonumber
  1 & = & |\chi_0| \ = \ |\chi^{0,0}| \\
  %\nonumber
  1 & = & |\chi_1| \ = \ 
  	|\chi^{1,0}\,\chi^{0,1}| \ = \ 
	|\chi^{1,0}|^2 \ = \ |\chi^{0,1}|^2\\
  %\nonumber
  1 & = & |\chi_2| \ = \ 
  	|\chi^{2,0}\,\chi^{2,2}\,\chi^{0,2}|
	\ = \ |\chi^{2,0}|^2 \,|\chi^{1,1}| \\
  %\nonumber 
  & \vdots & \\
  1 & = & 
  %\label{E:chiodd}
  |\chi_{2a-1}| \ = \ 
  | \chi^{2a-1,0} \cdots \chi^{a,a-1} |^2
  \,,\qquad \forall \ 2a-1 \le \sfn \,,\\
  %\label{E:chiev}
  1 & = & |\chi_{2a}| \ = \ 
  | (\chi^{2a,0} \cdots \chi^{a+1,a-1})^2 \,
  \chi^{a,a} | \,,\quad \forall \ 2a \le \sfn \,.
\end{eqnarray*}
%\end{subequations}
More generally, given $0 \le a \le \sfn$ and $m_{p,q} \in \bZ$ so that $m_{p,q}+m_{q,p} = 2m$ is independent of $p,q$, \eqref{SE:pq-ids} and \eqref{E:chi-ell} yield
\begin{equation}\label{E:sum}
  1 \ = \ |\chi_a|^m \ = \ 
  |\chi^{a,0}|^{m_{a,0}} \,
  |\chi^{a-1,1}|^{m_{a-1,1}} \cdots 
  |\chi^{1,a-1}|^{m_{1,a-1}} \,
  |\chi^{0,a}|^{m_{0,a}} \,.
\end{equation}

\bigskip

We are now ready to prove Theorem \ref{T:normone}: $|\chi_\infty(\gamma)| = 1$ for all $\gamma \in \monzero$.  The basic idea behind the proof -- that the theorem is a consequence of the decomposition of $\chi_\infty$ given by \eqref{E:chi-infty} and \eqref{E:chi-p}, and the identities \eqref{SE:pq-ids}, \eqref{SE:pqn-ids} and \eqref{E:sum} -- can be obscured by the intricate notation required to establish the general case (\S\ref{S:prf-ev}--\ref{S:prf-od}).  So it is instructive to first consider the simpler arguments in the cases $\sfn=1,2,3$. 

%----------------------------------------------------------
\subsection{Proof of Theorem \ref{T:normone}: weight $\sfn=1$}
%----------------------------------------------------------

According to \eqref{E:chi-infty}, we wish to show that $\chi_\infty = \chi^1_\infty$ has norm one.  From \eqref{E:chi-ell} and \eqref{SE:pq-ids} we see that
\begin{eqnarray*}
  1 & = & |\chi_0|
  \ = \ |\chi^{0,0}| \,,\\
  1 & = & |\chi_1|
  \ = \ |\chi^{1,0}\,\chi^{0,1}|
  \ = \ |\chi^{1,0}|^2 \,.
\end{eqnarray*}
Then \eqref{E:chi-p} yields $|\chi^1_\infty| = |\chi^{0,0}\,\chi^{1,0}| = 1$, as desired.  \hfill\qed

%----------------------------------------------------------
\subsection{Proof of Theorem \ref{T:normone}: weight $\sfn=2$}
%----------------------------------------------------------

According to \eqref{E:chi-infty}, we wish to show that $\chi_\infty = \chi^2_\infty$ has norm one.  From \eqref{E:chi-ell} and \eqref{SE:pq-ids} we see that 
\begin{eqnarray*}
  1 & = & |\chi_0|
  \ = \ |\chi^{0,0}| \,,\\
  1 & = & |\chi_1|
  \ = \ |\chi^{1,0}\,\chi^{0,1}|
  \ = \ |\chi^{1,0}|^2
\end{eqnarray*}
And from \eqref{SE:pqn-ids} we see that 
\[
  1 \ = \ |\chi^{2,0}|\,.
\]
Thus \eqref{E:chi-p} yields $|\chi^2_\infty| = |\chi^{0,0}\,\chi^{1,0}\,\chi^{2,0}| = 1$, as desired. \hfill\qed

%----------------------------------------------------------
\subsection{Proof of Theorem \ref{T:normone}: weight $\sfn=3$}
%----------------------------------------------------------

According to \eqref{E:chi-infty}, we wish to show that $|\chi_\infty| = | (\chi^3_\infty)^2\,\chi^2_\infty|=1$.  From \eqref{E:chi-ell} and \eqref{SE:pq-ids} we see that  
\begin{eqnarray*}
  1 & = & |\chi_0| \ = \ |\chi^{0,0}| \,,\\
  1 & = & |\chi_1|
  \ = \ |\chi^{1,0}\,\chi^{0,1}|
  \ = \ 
  	|\chi^{1,0}|^2 \ = \ |\chi^{0,1}|^2 \,, \\
  1 & = & |\chi_2|
  \ = \ 
  	|\chi^{2,0}\,\chi^{1,1}\,\chi^{2,0}|
  \ = \
  	|(\chi^{2,0})^2\,\chi^{1,1}| \,.
\end{eqnarray*}
And from \eqref{SE:pqn-ids} we see that 
\begin{eqnarray*}
  1 & = & |\chi^{3,0}| \,,\ |\chi^{2,1}| \\
  1 & = & | \chi^{3,1}\,\chi^{2,0} | \,.
\end{eqnarray*}
Keeping these identities in mind, \eqref{E:chi-p} yields
\begin{eqnarray*}
  |\chi_\infty^3| & = & 
  	|\chi^{0,0}\,\chi^{1,0}\,\chi^{2,0}\,\chi^{3,0}|
  \ = \ |\chi^{2,0}| \,,\\
  |\chi_\infty^2| & = & 
  	|\chi_\infty^3| \cdot 
  	|\chi^{0,1}\,\chi^{1,1}\,\chi^{2,1}\,\chi^{3,1}| \ = \ 
	|\chi^{2,0}| \cdot 
  	|\chi^{1,1}\,\chi^{3,1}| \,,
\end{eqnarray*}
so that 
$|(\chi_\infty^3)^2\,\chi_\infty^2| = 
|(\chi^{2,0})^2\,\chi^{1,1}| \cdot | \chi^{3,1}\,\chi^{2,0} | 
= 1$, as desired.\hfill\qed

%----------------------------------------------------------
\subsection{Proof of Theorem \ref{T:normone}: even weight} \label{S:prf-ev}
%----------------------------------------------------------

It will be helpful to keep in mind that 
\[
  \sfn \,+\, \sfk \,-\, 2 \ = \ 2 \sfn - \sfk
  \tand \sfn \,-\, \sfk \ = \ \sfk \,-\, 2 \,.
\]
The decompositions \eqref{E:chi-infty} and \eqref{E:chi-p} yield
\[
  \chi_\infty \ = \ \prod_{p=0}^\sfn
  (\chi^{p,0})^{\sfk-1} \, (\chi^{p,1})^{\sfk-2}
  \cdots (\chi^{p,\sfk-3})^2\,\chi^{p,\sfk-2} \,.
\]
The goal is to show that 
\begin{equation}\label{E:evgoal}
  |\chi_\infty| \ = \ \prod_{p=0}^\sfn 
  |\chi^{p,0}|^{\sfk-1} \, |\chi^{p,1}|^{\sfk-2}
  \cdots |\chi^{p,\sfk-2}| \ = \ 1 \,.
\end{equation}
It is convenient to group the terms $|\chi^{p,q}|^m$, $q+m = \sfk-1$, on the right-hand side of \eqref{E:evgoal} by their weight-graded degree $0 \le p+q \le \sfn+\sfk-2$.  To that end, we write 
\begin{equation}\label{E:evwt}
  |\chi_\infty| = \psi_0\,\psi_1 \cdots \psi_{\sfk-1} \,\cdot\, 
  \psi_{\sfk} \cdots \psi_{\sfn+\sfk-3} \,\psi_{\sfn+\sfk-2} \,,
\end{equation}
with 
\begin{eqnarray}
  \nonumber
  \psi_0 & = & |\chi^{0,0}|^{\sfk-1} \\
  \nonumber
  \psi_1 & = & 
  	|\chi^{1,0}|^{\sfk-1}\,|
	\chi^{0,1}|^{\sfk-2} \\
  \nonumber
  \psi_2 & = & |\chi^{2,0}|^{\sfk-1}\,
  	|\chi^{1,1}|^{\sfk-2}\,
	|\chi^{0,2}|^{\sfk-3}
	\\
  \label{E:ev1}
	& \vdots & \\
  \nonumber
  \psi_a & = & 
	|\chi^{a,0}|^{\sfk-1}\,
  	|\chi^{a-1,1}|^{\sfk-2}
	\cdots |\chi^{0,a}|^{\sfk-a-1} \\
  \nonumber
  & \vdots & \\
  \nonumber
  \psi_{\sfk-2} & = & 
	|\chi^{\sfk-2,0}|^{\sfk-1}\,
  	|\chi^{\sfk-3,1}|^{\sfk-2}
	\cdots |\chi^{0,\sfk-2}|^1\\
  \nonumber
  \psi_{\sfk-1} & = & 
	|\chi^{\sfk-1,0}|^{\sfk-1}\,|\chi^{\sfk-2,1}|^{\sfk-2}
	\cdots |\chi^{1,\sfk-2}|^1\,|\chi^{0,\sfk-1}|^0 \,;
\end{eqnarray}
and
\begin{eqnarray}
  \nonumber
  \psi_\sfk & = & 
    |\chi^{\sfk,0}|^{\sfk-1} \,
    |\chi^{\sfk-1,1}|^{\sfk-2} \cdots
    |\chi^{2,\sfk-2}| \cdot
    |\chi^{1,\sfk-1}\,\chi^{0,\sfk}|^0  \\
  \nonumber
  \psi_{\sfk+1} & = & 
    |\chi^{\sfk+1,0}|^{\sfk-1} \,
    |\chi^{\sfk,1}|^{\sfk-2} \cdots
    |\chi^{3,\sfk-2}| \cdot
    |\chi^{2,\sfk-1}\,\chi^{1,\sfk}\,
    	\chi^{0,\sfk+1}|^0  \\
  \nonumber
  & \vdots & \\
  \nonumber
  \psi_{\sfn-1} & = & 
  	|\chi^{\sfn-1,0}|^{\sfk-1} \, 
	|\chi^{\sfn-2,1}|^{\sfk-2} \cdots 
	|\chi^{\sfn-\sfk+1,\sfk-2}| \cdot 
	|\chi^{\sfn-\sfk,\sfk-1} \cdots \chi^{0,\sfn-1}|^0 \\
  \label{E:ev2}
  \psi_\sfn & = &
  	|\chi^{\sfn,0}|^{\sfk-1} \, 
	|\chi^{\sfn-1,1}|^{\sfk-2} \cdots 
	|\chi^{\sfk,\sfk-2}| \cdot 
	|\chi^{\sfk-1,\sfk-1} \cdots \chi^{0,\sfn}|^0 \\
  \nonumber
  \psi_{\sfn+1} & = & 
  	|\chi^{\sfn,1}|^{\sfk-2} \, 
	|\chi^{\sfn-1,2}|^{\sfk-3} \cdots 
	|\chi^{\sfk+1,\sfk-2}| \cdot 
	|\chi^{\sfk,\sfk-1} \cdots \chi^{1,\sfn}|^0 \\
  \nonumber
  & \vdots & \\
  \nonumber
  \psi_{\sfn+\sfk-3} & = & 
  |\chi^{\sfn,\sfk-3}|^2 \,|\chi^{\sfn-1,\sfk-2}| \\
  \nonumber
  \psi_{\sfn+\sfk-2} & = & |\chi^{\sfn,\sfk-2}| \,.
\end{eqnarray}
For the products in the group \eqref{E:ev1}, we see that  \eqref{E:sum} yields 
\begin{equation}\label{E:evd1}
  \psi_\ell \ = \ |\chi_\ell|^{(2\sfk-a-2)/2} = 1 \,,
  \quad \forall \ 0 \le \ell \le \sfk-1 \,.
\end{equation}
Turning to the second group \eqref{E:ev2}, the first equation of \eqref{SE:pqn-ids} implies 
\begin{equation}\label{E:evd2}
  \psi_\sfn \ = \ 1 \,.
\end{equation}
And the second equation of \eqref{SE:pqn-ids} implies
\begin{eqnarray*}
  \psi_\sfk \, \psi_{\sfn+\sfk-2} & = & 
    |\chi^{\sfk,0} \, \chi^{\sfk-1,1}|^{\sfk-2} \cdot
    |\chi^{\sfk-2,2}|^{\sfk-3} \cdots
    |\chi^{2,\sfk-2}| \cdot
    |\chi^{1,\sfk-1}\,\chi^{0,\sfk}|^0  \\
  \psi_{\sfk+1} \, \psi_{\sfn-\sfk-3} & = & 
    |\chi^{\sfk+1,0}\,\chi^{\sfk,1}\,\chi^{\sfk-1,2}|^{\sfk-3} 
    \cdot |\chi^{\sfk-2,3}|^{\sfk-4} \cdots |\chi^{3,\sfk-2}| 
    \cdot |\chi^{2,\sfk-1}\,\chi^{1,\sfk}\,
    	\chi^{0,\sfk+1}|^0  \\
  & \vdots & \\
  \psi_{\sfn-1}\,\psi_{\sfn+1} & = & 
  	|\chi^{\sfn-1,0} \, \chi^{\sfn-2,1} \cdots 
	\chi^{\sfn-\sfk+1,\sfk-2}|  \,.
\end{eqnarray*}
It follows from \eqref{E:sum} that 
\begin{equation} \label{E:evd3}
\renewcommand{\arraystretch}{1.3}
\begin{array}{rcl}
  \psi_\sfk \, \psi_{\sfn+\sfk-2} & = & 
  |\chi_\sfk|^{(\sfk-2)/2} \ = \ 1 \\
  \psi_{\sfk+1}\,\psi_{\sfn+\sfk-3} & = & 
  |\chi_{\sfn+\sfk-3}|^{(\sfk-3)/2} \ = \ 1  \\
  & \vdots & \\
  \psi_{\sfn-1}\,\psi_{\sfn+1} & = & 
  |\chi_{\sfn-1}|^{1/2} \ = \ 1 \,.
\end{array}
\end{equation}
The desired \eqref{E:evgoal} now follows from \eqref{E:evwt}, \eqref{E:evd1}, \eqref{E:evd2} and \eqref{E:evd3}.  \hfill\qed

%----------------------------------------------------------
\subsection{Proof of Theorem \ref{T:normone}: odd weight} \label{S:prf-od}
%----------------------------------------------------------

If $\sfn \ge 3$ is odd, then $\sfn = 2 \sfk - 1$.  The decompositions \eqref{E:chi-infty} and \eqref{E:chi-p} yield
\[
  \chi_\infty \ = \ \prod_{p=0}^\sfn
  (\chi^{p,0})^\sfn \, (\chi^{p,1})^{\sfn-2}
  \cdots (\chi^{p,\sfk-2})^3\,\chi^{p,\sfk-1} \,.
\]
The goal is to show that
\begin{equation}\label{E:odgoal}
  |\chi_\infty| \ = \ \prod_{p=0}^\sfn
  |\chi^{p,0}|^\sfn \, |\chi^{p,1}|^{\sfn-2}
  \cdots |\chi^{p,\sfk-2}|^3\,|\chi^{p,\sfk-1}| \ = \ 1 \,.
\end{equation}
Again, it is convenient to group the terms $|\chi^{p,q}|^m$, $2q+m = \sfn$, on the right-hand side of \eqref{E:odgoal} by their weight-graded degree $0 \le p+q \le \sfn+\sfk-1$.  We write 
\begin{equation}\label{E:odwt}
  |\chi_\infty| \ = \ 
  \psi_0\,\psi_1 \cdots \psi_{\sfk-1} \,\cdot\,
  \psi_{\sfk} \cdots \psi_{\sfn+\sfk-2} \,\psi_{\sfn+\sfk-1}\,,
\end{equation}
with 
\begin{eqnarray}
  \nonumber
  \psi_0 & = & |\chi^{0,0}|^\sfn \\
  \nonumber
  \psi_1 & = & 
  	|\chi^{1,0}|^\sfn\,|\chi^{0,1}|^{\sfn-2} \\
  \nonumber
  \psi_2 & = & |\chi^{2,0}|^\sfn\,
  	|\chi^{1,1}|^{\sfn-2}\,
	|\chi^{0,2}|^{\sfn-4}
	\\
  \label{E:od1}
	& \vdots & \\
  \nonumber
  \psi_a & = & 
	|\chi^{a,0}|^\sfn\,
  	|\chi^{a-1,1}|^{\sfn-2}
	\cdots |\chi^{0,a}|^{\sfn-2a} \\
  \nonumber
  & \vdots & \\
  \nonumber
  \psi_{\sfk-1} & = & 
	|\chi^{\sfk-1,0}|^\sfn\,
  	|\chi^{\sfk-2,1}|^{\sfn-2}
	\cdots |\chi^{0,\sfk-1}| \,;
\end{eqnarray}
and
\begin{eqnarray}
  \nonumber
  \psi_\sfk & = & 
    |\chi^{\sfk,0}|^\sfn \,
    |\chi^{\sfk-1,1}|^{\sfn-2} \cdots
    |\chi^{1,\sfk-1}| \cdot
    |\chi^{0,\sfk}|^0  \\
  \nonumber
  \psi_{\sfk+1} & = & 
    |\chi^{\sfk+1,0}|^\sfn \,
    |\chi^{\sfk,1}|^{\sfn-2} \cdots
    |\chi^{2,\sfk-1}| \cdot
    |\chi^{1,\sfk}\,\chi^{0,\sfk+1}|^0  \\
  \nonumber
  & \vdots & \\
  \nonumber
  \psi_{\sfn-1} & = & 
  	|\chi^{\sfn-1,0}|^\sfn \, 
	|\chi^{\sfn-2,1}|^{\sfn-2} \cdots 
	|\chi^{\sfk-1,\sfk-1}| \cdot 
	|\chi^{\sfk-2,\sfk} \cdots \chi^{0,\sfn-1}|^0 \\
  \label{E:od2}
  \psi_\sfn & = &
  	|\chi^{\sfn,0}|^\sfn \, 
	|\chi^{\sfn-1,1}|^{\sfn-2} \cdots 
	|\chi^{\sfk,\sfk-1}| \cdot 
	|\chi^{\sfk-1,\sfk} \cdots \chi^{0,\sfn}|^0 \\
  \nonumber
  \psi_{\sfn+1} & = & 
  	|\chi^{\sfn,1}|^{\sfn-2} \, 
	|\chi^{\sfn-1,2}|^{\sfn-4} \cdots 
	|\chi^{\sfk+1,\sfk-1}| \cdot 
	|\chi^{\sfk,\sfk} \cdots \chi^{1,\sfn}|^0 \\
  \nonumber
  & \vdots & \\
  \nonumber
  \psi_{\sfn+\sfk-2} & = & |\chi^{\sfn,\sfk-2}|^3 \,
  	|\chi^{\sfn-1,\sfk-1}| \\
  \nonumber
  \psi_{\sfn+\sfk-1} & = & |\chi^{\sfn,\sfk-1}| \,.
\end{eqnarray}
For the terms in the first group \eqref{E:od1}, \eqref{E:sum} yields
\begin{equation}\label{E:odd1}
  \psi_\ell \ = \ |\chi_\ell|^{\sfn-\ell} = 1 \,,
  \quad \forall \ 0 \le \ell \le \sfk-1 \,.
\end{equation}
Turning to the second group \eqref{E:od2}, the first equation of \eqref{SE:pqn-ids} implies 
\begin{equation}\label{E:odd2}
  \psi_\sfn \ = \ 1 \,.
\end{equation}
For the remaining terms in this group, the second equation of \eqref{SE:pqn-ids} yields
\begin{eqnarray*}
  \psi_\sfk \,\psi_{\sfn+\sfk-1} & = & 
  	|\chi^{\sfk,0}|^{\sfn-1} \, \Big\{
	|\chi^{\sfk-1,1}|^{\sfn-2} \,
	|\chi^{\sfk-2,2}|^{\sfn-4}
	\cdots \\
	& & \hspace{110pt} \cdots |\chi^{2,\sfk-2}|^3 \,
	|\chi^{1,\sfk-1}|^1 \Big\} \,
    |\chi^{0,\sfk}|^0 \\
  \psi_{\sfk+1}\,\psi_{\sfn+\sfk-2} & = &
  |\chi^{\sfk+1,0} \, \chi^{\sfk,1}|^{\sfn-3} 
  \, \Big\{
    |\chi^{\sfk-1,2}|^{\sfn-4} \,,
    |\chi^{\sfk-2,3}|^{\sfn-6} \cdots \\
  & & \hspace{110pt} \cdots
    |\chi^{3,\sfk-2}|^3 
    |\chi^{2,\sfk-1}|^1
  \, \Big\}  \,
    |\chi^{1,\sfk}\,\chi^{0,\sfk+1}|^0 \\
  & \vdots & \\
  \psi_{\sfn-1} \,\psi_{\sfn+1} & = &   
  |\chi^{\sfn-1,0} \,\chi^{\sfn-2,1} \cdots 
	\chi^{\sfk,\sfk-2}|^2 \, 
	|\chi^{\sfk-1,\sfk-1}| \cdot 
	|\chi^{\sfk-2,\sfk} \cdots \chi^{0,\sfn-1}|^0
\end{eqnarray*}
It follows from \eqref{E:sum} that 
\begin{equation} \label{E:odd3}
\renewcommand{\arraystretch}{1.3}
\begin{array}{rcl}
  \psi_\sfk\,\psi_{\sfn+\sfk-1} & = & 
  |\chi_{\sfn+\sfk-1}|^{\sfk-1} \ = \ 1 \\
  \psi_{\sfk+1}\,\psi_{\sfn+\sfk-2} & = & 
  |\chi_{\sfn+\sfk-2}|^{\sfk-2} \ = \ 1  \\
  & \vdots & \\
  \psi_{\sfn-1}\,\psi_{\sfn+1} & = & 
  |\chi_{\sfn+1}| \ = \ 1 \,.
\end{array}
\end{equation}
The desired \eqref{E:odgoal} now follows from \eqref{E:odwt}, \eqref{E:odd1}, \eqref{E:odd2} and \eqref{E:odd3}. \hfill\qed

%----------------------------------------------------------
\section{Proof of Theorem \ref{T:chi-infty}} \label{S:prf-rtu}
%----------------------------------------------------------

%----------------------------------------------------------
\subsection{Basic idea of the proof} \label{S:idea}
%----------------------------------------------------------

We will construct a rational representation $\monzero \to \tGL(P)$, and a $\gamma$--invariant rational subspace $P' \subset P$ with the property that the $\gamma$--eigenvalues of $P'$ include $\overline{\chi_\infty(\gamma)}{}^{-1}$, and all have absolute value one (Lemma \ref{L:U}).  Since these eigenvalues are the roots of a rational polynomial (the characteristic polynomial of $\gamma : P' \to P'$), it will then follow from Kronecker's Theorem that these eigenvalues are roots of unity. 

In the special case that there exists a $\monzero$--invariant $N$ polarizing $(W,F)$, standard constructions yield the rational representation $\monzero \to \tGL(P)$ and $P' = P$; this is reviewed in \S\ref{S:spec-case}.  In the general case, a new perspective is needed to deal with this lack of invariance, cf.~\S\S\ref{S:gen-case1}--\ref{S:gen-case2}.

%----------------------------------------------------------
\subsection{Characters defined by the mixed Hodge structure on $H$} \label{S:H}
%----------------------------------------------------------

Recall the induced mixed Hodge structure on $H$ of \S\ref{S:indH}.  Just as we used the mixed Hodge structure on $V$ to define the characters $\chi^{p,q} : \Gamma_\infty \to \bC^*$, we may use the induced mixed Hodge structure on $H$ to define characters $\eta^{p,q} : \Gamma_\infty \to \bC^*$.  To be precise: By \eqref{E:monW} and \eqref{SE:ind-MHS}, the induced action of $\monzero$ on $H$ preserves the subspaces $W_\ell(H)$.  In particular, there is an induced action of $\monzero$ on $\tGr^W_\ell(H) = W_\ell(H)/W_{\ell-1}(H)$.  Note that $\tGr^W_\ell(H)$ is a rational $\monzero$--representation.  Since $(W,F)$ is a mixed Hodge structure, $F$ induces a Hodge decomposition 
\begin{equation} \nonumber %\label{E:GrW(H)pq}
  \tGr^W_\ell(H_\bC) \ = \
  \bigoplus_{p+q=\ell} \tGr^W_\ell(H)^{p,q} \,.
\end{equation}
Recall that the action of $\monzero$ preserves the Hodge summands $\tGr^W_\ell(V)^{p,q}$ (Lemma \ref{L:Gr-pq-preserved}).  The induced mixed Hodge structure on $H$, cf.~\S\ref{S:indH}, inherits this property: the action of $\monzero$ on $\tGr^W_\ell(H_\bC)$ preserves the summands $\tGr^W_\ell(H)^{p,q}$.  So we may define a character $\eta^{p,q} : \monzero \to \bC^*$ by
\[
  \eta^{p,q}(\gamma) \ = \ \tdet\{ 
  	\gamma : \tGr^W_\ell(H)^{p,q} \ \to \ \tGr^W_\ell(H)^{p,q} 
	\} \,.
\]
We have natural identifications $\tGr^W_\ell(H)^{p,q} \simeq H^{p,q}_{W,F}$, and it follows from \eqref{E:dsLinfty} that 
\begin{equation}\label{E:chi-infty-eta}
  \chi_\infty \ = \ \eta^{\sfw-\sfb,\sfw-\sfa} \,.
\end{equation}
The analogs of \eqref{SE:pq-ids} hold: Since $\overline{\tGr^W_\ell(H)^{p,q}} = \tGr^W_\ell(H)^{q,p}$, we have
\begin{equation} \label{E:eta-conj}
  \overline{\eta^{p,q}} \ = \ \eta^{q,p} \,.
\end{equation}
The nondegeneracy of $Q$ and the $Q$--isotropy of the weight filtration $W(H)$, \cf~\eqref{E:indQW}, identify $\tGr^W_\ell(H)^\vee \simeq  \tGr^W_{2\sfw-\ell}(H)$.  Then the $Q$--isotropy of the Hodge filtration $F(H_\bC)$, \cf~\eqref{E:indQF}, identifies 
\begin{equation}\label{E:Gr-dual}
  (\tGr^W_\ell(H)^{p,q})^\vee \ \simeq \ 
  \tGr^W_{2\sfw-\ell}(H)^{\sfw-p,\sfw-q}
\end{equation}
so that 
\begin{equation} \label{E:eta-dual}
  (\eta^{p,q})^{-1} \ = \ \eta^{\sfw-p,\sfw-q} \,.
\end{equation}
From \eqref{E:chi-infty-eta}, \eqref{E:eta-conj} and \eqref{E:eta-dual} we see that 
\begin{equation} \label{E:eta-ab}
  (\overline{\chi_\infty})^{-1} \ = \ \eta^{\sfa,\sfb} \,.
\end{equation}

%----------------------------------------------------------
\subsection{Proof of Theorem \ref{T:chi-infty}: special case that there exists a $\monzero$--invariant $N$} \label{S:spec-case}
%----------------------------------------------------------

Before proving the theorem in the general case, it is instructive to consider the following special case.  Suppose that $(W,F)$ can be polarized by an element $N \in \fg_\bQ \cap \fg^{-1,-1}_{W,F}$ that is \emph{invariant} under $\monzero$.  The first containment of \eqref{E:HN(WF)} implies 
\begin{equation} \label{E:NGrH}
  N : \tGr^W_\ell(H) \to \tGr^W_{\ell-2}(H) \,.
\end{equation}
The kernel $P_{\sfw+\ell}(N) = \tker\{N^{\ell+1} : \tGr^W_{\sfw+\ell}(H) \to \tGr^W_{\sfw-\ell-2}(H)\}$ is $\monzero$--invariant.  

\begin{lemma}
Given $\gamma \in \monzero$, the eigenvalues of $\gamma : P_{\sfw+\ell}(N) \to P_{\sfw+\ell}(N)$, all have absolute value one. 
\end{lemma}

\begin{proof}
The subspace $P_{\sfw+\ell}(N) \subset \tGr^W_{\sfw+\ell}(H)$ is a Hodge substructure of $\tGr^W_{\sfw+\ell}(H)$, and the action of $\monzero$ preserves the Hodge summands $P_{\sfw+\ell}(N)^{p,q} = P_{\sfw+\ell}(N)_\bC \,\cap\, \tGr^W_{\sfw+\ell}(H)^{p,q}$.  (Cf.~\S\ref{S:not-LMHS} and \S\ref{S:indN}.)  So it suffices to show that the eigenvalues of $\gamma : P_{\sfw+\ell}(N)^{p,q} \to P_{\sfw+\ell}(N)^{p,q}$ all have absolute value one, for all $p,q$.  

Suppose that $v \in \tGr^W_{\sfw+\ell}(H)^{p,q}$ is an eigenvector with eigenvalue $\e$.  Then $\overline v \in \tGr^W_{\sfw+\ell}(H)^{q,p}$ is an eigenvector with eigenvalue $\overline\e$, and $N^\ell(\overline v) \in \tGr^W_{\sfw-\ell}(H)^{q-\ell,p-\ell}$ is an eigenvector with eigenvalue $\overline\e$.  The Hodge structure on $P_{\sfw+\ell}(N)$ is polarized by $Q(\cdot , N^\ell \cdot)$, so that $Q(v,N^\ell\overline v) \not=0$.  Since $Q$ is $\Gamma$--invariant, and $N$ is $\monzero$--invariant we have 
\[
  0 \ \not= \ Q(v,N^\ell\overline v) \ = \ 
  Q(\gamma v, \gamma N^\ell\overline v) \ = \ 
  |\e|^2\, Q(v,N^\ell\overline v) \,,
\]
for all $\gamma \in \monzero$.  This forces $|\e|^2=1$.
\end{proof}

\begin{corollary}\label{C:None}
Given $\gamma \in \monzero$, the eigenvalues of $\gamma : P_{\sfw+\ell}(N) \to P_{\sfw+\ell}(N)$, are roots of unity. 
\end{corollary}

\begin{proof}
Since $N$ is rational, the representation $\monzero \to \tGL(P_{\sfw+\ell}(N))$ is rational.  Kronecker's Theorem implies these eigenvalues are all roots of unity.
\end{proof}

\begin{corollary}
The eigenvalue $\chi_\infty(\gamma)$ is a root of unity.
\end{corollary}

\begin{proof}
The natural identification $\tGr^W_\ell(H)^{p,q} \simeq H^{p,q}_{W,F}$ and \eqref{SE:dsL} allow us to identify 
\begin{equation}\label{E:Lids}
  L \ = \ H^{\sfa,\sfb}_{W,F} 
  \ \simeq \ \tGr^W_{\sfw+\sfc}(H)^{\sfa,\sfb} \,.
\end{equation}
Elements $\gamma \subset \monzero$ act on $\tGr^W_{\sfw+\sfc}(H)^{\sfa,\sfb}$ by the eigenvalue $\eta^{\sfa,\sfb}(\gamma)$.  So if we can show $\tGr^W_{\sfw+\sfc}(H)^{\sfa,\sfb} \subset P_{\sfw+\sfc}(N)$, then the corollary will follow from \eqref{E:eta-ab} and Corollary \ref{C:None}.  To see that $\tGr^W_{\sfw+\sfc}(H)^{\sfa,\sfb} \subset P_{\sfw+\sfc}(N)$, note that the definition of $\sfa,\sfb$ implies $0 = H^{\sfw-\sfb-1,\sfw-\sfa-1}_{W,F} \supset N^{\sfc+1}(H^{\sfa,\sfb}_{W,F})$, \cf~Remark \ref{R:ab}.  
\end{proof}

%----------------------------------------------------------
\subsection{Proof of Theorem \ref{T:chi-infty}: the general case, step 1} \label{S:gen-case1}
%----------------------------------------------------------

The main difficulty that must be handled is the fact that there may be no $\monzero$--invariant $N$ that polarizes the mixed Hodge structure $(W,F)$.  Let $\cM(A)$ be the set of all limiting mixed Hodge structures $(W,F,\s)$ arising along $A$, as in \S\ref{S:LMHS}, and set 
\[
  \cN \ = \ \{ \tAd_\gamma(N) \ | \ \gamma \in \monzero \,,\ 
  N \in \s \,,\ (W,F,\s) \in \cM(A) \}
  \ \subset \ \fg_\bQ \,. 
\]

\begin{lemma} \label{L:cN}
We have $\displaystyle \cN \ \subset \ 
  \bigoplus_{p,q\le-1} \fg^{p,q}_{W,F}$.
\end{lemma}

\begin{proof}
The weight filtration $W$ is well-defined along $A$, by definition of $\PhiSBB$.  Proposition \ref{P:RvE} and \eqref{E:Finfty} imply 
any two Hodge filtrations $F$ and $F'$ arising along $A$ lie in the same $\tStab_{G_\bC}(F_\infty)$ orbit.  The identities \eqref{E:ds-conj} and \eqref{E:FvFinfty} imply that the images of $\tStab_{G_\bC}(F_\infty)$ and $S$ coincide in $G_\bC/\tStab_{G_\bC}(F)$.  This implies that $F$ and $F'$ lie in the same $S$--orbit.  

The definition \eqref{SE:monzero} implies $S$ stabilizes
\[
  F^{-1}_\infty(\fg_\bC) \,\cap\, \overline{F^{-1}_\infty(\fg_\bC)}
  \ = \ \bigoplus_{p,q \le -1} \fg^{p,q}_{W,F} \,.
\]
The lemma now follows from \eqref{E:Nin}.
\end{proof}

In general, $\cN \not\subset \fg^{-1,-1}_{W,F}$, and so elements of $\cN$ will not ``polarize'' the mixed Hodge structure $(W,F)$ in the sense of \cite[(2.6)]{MR840721}.  Nonetheless, given $M \in \cN$, Lemma \ref{L:cN} implies we still have a well-defined isomorphism $M^\sfc : \tGr^W_{\sfw+\sfc}(H) \to \tGr^W_{\sfw-\sfc}(H)$ with the properties that 
\begin{equation}\label{E:Mc}
  M^\sfc(\tGr^W_{\sfw+\sfc}(H)^{p+\sfc,q+\sfc}) \ = \ 
  \tGr^W_{\sfw-\sfc}(H)^{p,q} \,,
\end{equation}
and $Q_M(u,v)=Q( u , M^\sfc v)$ polarizes the induced Hodge structure on $\tker\{ M^{\sfc+1} : \tGr^W_{\sfw+\sfc}(H) \to \tGr^W_{\sfw-\sfc-2}(H) \}$.  (The map $M^\sfc : \tGr^W_{\sfw+\sfc}(H) \to \tGr^W_{\sfw-\sfc}(H)$ is pictured in Figure \ref{fig:hd2}.)  And while the kernel will not be $\monzero$--invariant in general, the intersection
\[
  P \ = \ 
  \bigcap_{M \in \cN} 
  \tker\{ M^{\sfc+1} : 
  	\tGr^W_{\sfw+\sfc}(H) \to \tGr^W_{\sfw-\sfc-2}(H) \} 
  \ \subset \ \tGr^W_{\sfw+\sfc}(H)
\]
is a rational $\monzero$--submodule.   Lemma \ref{L:cN} implies that $P$ is a Hodge substructure of $\tGr^W_{\sfw+\sfc}(H)$; and every $Q_M$, $M \in \cN$, polarizes this Hodge structure.

\begin{lemma}\label{L:inP}
We have $L \subset P_\bC$.
\end{lemma}

\begin{remark}\label{R:Lids}
In the statement of the lemma we are using \eqref{E:Lids} to identify $L$ with the subspace $\tGr^W_{\sfw+\sfc}(H)^{\sfa,\sfb} \subset \tGr^W_{\sfw+\sfc}(H)$.
\end{remark}

\begin{proof}
Lemma \ref{L:cN} and the definition of $\sfa,\sfb$ imply $0 = H^{\sfw-\sfb-1,\sfw-\sfa-1}_{W,F} \supset N^{\sfc+1}(H^{\sfa,\sfb}_{W,F})$, \cf~Remark \ref{R:ab}.  The lemma follows from the identification \eqref{E:Lids}.
\end{proof}

\begin{figure}
\begin{tikzpicture}[baseline=(current  bounding  box.center),scale=0.8]
  \draw [gray,dashed] (2.5,6.5) --  (6.5,2.5);
  \node [gray,below right] at (6.5,2.5) {$\tGr^W_{\sfw+\sfc}(H)$};
  \draw [gray,dashed] (-0.5,3.5) --  (3.5,-0.5);
  \node [gray,above left] at (-0.5,3.5) {$\tGr^W_{\sfw-\sfc}(H)$};
  \node [gray] at (5.7,2.3) {$M$};
  \draw [gray] (0,0) -- (6,0) -- (6,6) -- (0,6) -- (0,0);
  \draw [gray] (1,1) -- (5,1) -- (5,5) -- (1,5) -- (1,1);
  \foreach \x in {1,...,5}
  {
  	\foreach \y in {1,...,5}
	{
	  	\draw [fill,gray] (\x,\y) circle [radius=0.07];
	}
  }
  \draw [gray,-to] (5.8,2.8) -- (5.2,2.2);
  \draw [gray,-to] (4.8,1.8) -- (4.2,1.2);
  \draw [gray,-to] (3.8,0.8) -- (3.2,0.2);
  \draw [gray,-to] (4.8,3.8) -- (4.2,3.2);
  \draw [gray,-to] (3.8,2.8) -- (3.2,2.2);
  \draw [gray,-to] (2.8,1.8) -- (2.2,1.2);
  \draw [gray,-to] (3.8,4.8) -- (3.2,4.2);
  \draw [gray,-to] (2.8,3.8) -- (2.2,3.2);
  \draw [gray,-to] (1.8,2.8) -- (1.2,2.2);
  \draw [gray,-to] (2.8,5.8) -- (2.2,5.2);
  \draw [gray,-to] (1.8,4.8) -- (1.2,4.2);
  \draw [gray,-to] (0.8,3.8) -- (0.2,3.2);
  \draw [fill] (3,0) circle [radius=0.08];
  %\node [below left] at (3,0) {$(\sfa,\sfb)$};
  \node [below left] at (3,0) {$L_\infty$};
  \draw [fill] (0,3) circle [radius=0.08];
  %\node [below left] at (0,3) {$(\sfb,\sfa)$};
  \node [below left] at (0,3) {$\overline{L_\infty}$};
  \draw [fill] (6,3) circle [radius=0.08];
  %\node [below right] at (6,3) {$(\sfa+k,\sfb+k)$};
  \node [above right] at (6,3) {$L$};
  \draw [fill] (3,6) circle [radius=0.08];
  %\node [below right] at (3,6) {$(\sfb+k,\sfa+k)$};
  \node [above right] at (3,6) {$\overline{L}$};
\end{tikzpicture}
\label{fig:hd2}
\caption{The map $M^\sfc : \tGr^W_{\sfw+\sfc}(H) \to \tGr^W_{\sfw-\sfc}(H)$.}
\end{figure}

%----------------------------------------------------------
\subsection{Proof of Theorem \ref{T:chi-infty}: the general case, step 2} \label{S:gen-case2}
%----------------------------------------------------------

A priori $Q_M$ need not be $\monzero$--invariant: we have $(\gamma \cdot Q_M)(u,v) = Q_M(\gamma^{-1} u , \gamma^{-1}v) = Q(\gamma^{-1} u , M^\sfc \gamma^{-1} v)$.  Since $Q$ is $\monzero$--invariant, this yields $\gamma \cdot Q_M = Q_{\tAd_\gamma(M)}$.   In the absence of a monodromy-invariant polarization on $P$ we proceed as follows.  The bilinear form $Q_M$ is an element of the rational $\monzero$--submodule $T = \tGr^W_{\sfw+\sfc}(H)^\vee \ot \tGr^W_{\sfw+\sfc}(H)^\vee$.  Fix $\gamma \in \monzero$, and let $\tilde T_\nu \subset T\ot\bC$ be the generalized eigenspaces, as $\nu$ runs over the \emph{distinct} eigenvalues of $\gamma : T \to T$.  

\begin{lemma}
Both $\tilde T_1$ and $\op_{\n\not=1}\,\tilde T_\nu$ are defined over $\bQ$.
\end{lemma}

\begin{proof}
Let $p_\gamma(t) = \tdet\{ t\,\tId - \gamma : T \to T \} \in \bQ[t]$ be the characteristic polynomial.  Let $0 \le c \in \bZ$ be the largest integer with the property that $(t-1)^c$ divides $p_\gamma(t)$.  Set $q(t) = p_\gamma(t)/(t-1)^k \in \bQ[t]$.  Then $p_\gamma(t) = (t-1)^c\,q(t)$.  And both
\begin{eqnarray*}
  \tilde T_1 & = & 
  \tker\{ (\gamma - \tId)^c : T \ot \bC \ \to \ T \ot \bC \} \\
  \op_{\nu\not=1}\, \tilde T_\nu & = & 
  \tker\{ q(\gamma) : T \ot \bC \ \to \ T \ot \bC \} \,.
\end{eqnarray*}
are defined over $\bQ$.
\end{proof}

Let $T = T_1 \op T'$ be the associated rational decomposition: $T_1 = T \cap \tilde T_1$ and $T' = T \cap \op_{\nu\not=1} \tilde T_\nu$.  Let $Q_M = Q_{M,1} + Q_M'$ be the corresponding sum.  By construction, we have $\gamma \cdot Q_{M,1} = Q_{\tAd_\gamma M,1}$ and $\gamma \cdot Q_M' = Q_{\tAd_\gamma M}'$.  So
\[
  P' \ = \ \bigcap_{M \in \cN} 
  \{ u \in P \ | \  Q_M'(u,P) \ = \ 0 \} 
\]
is a rational subspace of $P$ that is preserved under the action of $\gamma$.

\begin{lemma}\label{L:U}
The eigenvalues of $\gamma : P' \to P'$ include $\overline{\chi_\infty(\gamma)}{}^{-1}$ and all have absolute value one.
\end{lemma}

\noindent As discussed in \S\ref{S:idea}, this establishes Theorem \ref{T:chi-infty}, and by extension Theorem \ref{T:triv} (Remark \ref{R:chibeta}).

\begin{proof}
It follows from \eqref{E:ds-indQ} that the polarization $Q$ identifies $\tGr^W_{\sfw+\sfc}(H)^\vee \simeq \tGr^W_{\sfw-\sfc}(H)$, so that $T \simeq \tGr^W_{\sfw-\sfc}(H) \ot \tGr^W_{\sfw-\sfc}(H)$.  Let $E_{\sfw-\sfc}$ denote the set of $\gamma$--eigenvalues of $\tGr^W_{\sfw-\sfc}(H)$ listed with (algebraic) multiplicity.  We may fix a basis $\{e_\m\}_{\m\in E_{\sfw-\sfc}}$ of $\tGr^W_{\sfw-\sfc}(H)$ so that each $e_\m$ is a (generalized) $\gamma$--eigenvector with eigenvalue $\m$.  Then 
\[
  Q_M \ = \ \sum_{\m,\n} q_{M\m\n}\, e_\m\ot e_\n
\]
defines $q_{M\m\n}\in \bC$; and we have 
\[
  Q_{M,1} \ = \ \sum_{\m\n=1} q_{M\m\n}\, e_\m\ot e_\n
  \tand 
  Q_M' \ = \ \sum_{\m\n\not=1} q_{M\m\n}\, e_\m\ot e_\n \,.
\]

Since the action of $\gamma$ on $\tGr^W_{\sfw-\sfc}(H_\bC)$ preserves the Hodge summands $\tGr^W_{\sfw-\sfc}(H)^{p,q}$, cf.~\S\ref{S:H}, we may choose this basis so that each $e_\m$ is contained in some $\tGr^W_{\sfw-\sfc}(H)^{p,q}$.  Since $\gamma$ is rational, and therefore real, we may also choose this basis so that it is closed under complex conjugation: for every $\m$, there exists $\olm$ so that $\overline{e_\m} = e_{\olm}$.   Let $\{f_\m\}_{\n\in E_{\sfw-\sfc}}$ be the dual basis of $\tGr^W_{\sfw+\sfc}(H)$ defined by $Q(e_\m,f_\n) = \d_{\m\n}$.  Then $f_\m$ is a (generalized) eigenvector with eigenvalue $\m^{-1}$.  And if $e_\m \in \tGr^W_{\sfw-\sfc}(H)^{p,q}$, then \eqref{E:ds-indQ} implies $f_\m \in \tGr^W_{\sfw+\sfc}(H)^{\sfw-p,\sfw-q}$.  We have 
\[
  q_{M\m\n} \ = \ Q_M(f_\m,f_\n) \,.
\]

We claim that $L \simeq \tGr^W_{\sfw+\sfc}(H)^{\sfa,\sfb} \subset P'$, cf.~Remark \ref{R:Lids}.  First note that \eqref{E:eta-ab}, Lemma \ref{L:inP} and \eqref{E:Lids}  imply that $L$ is a $\gamma$--eigenline of $P_\bC$ with eigenvalue $\d=\overline{\chi_\infty(\gamma)}{}^{-1}$.  Let $f_\d$ be the basis vector spanning $\tGr^W_{\sfw+\sfc}(H)^{\sfa,\sfb} \simeq L$.  Then \eqref{E:Gr-dual} implies $Q_M(f_\d,f_\n)$ is nonzero if and only if $M^\sfc(f_\n) \in \tGr^W_{\sfw-\sfa,\sfw-\sfb}(H)$.  Lemma \ref{L:cN} implies this is equivalent to $f_\n \in \tGr^W_{\sfw+\sfc}(H)^{\sfb,\sfa}$.  This in turn forces $f_\n = f_{\overline\d}$.   It then follows from Theorem \ref{T:normone} that $q_{M\d\overline\d} \,e_\d \ot e_{\overline\d}$ is a summand of $Q_{M,1 }$, and $Q_M'(f_\d,P) = 0$.  Thus $L \subset P'$, and $\d=\overline{\chi_\infty(\gamma)}{}^{-1}$ is an eigenvalue of $\gamma : P' \to P'$.

It follows from Lemma \ref{L:cN} that $P' \subset P \subset \tGr^W_{\sfw+\sfc}(H)$ is a Hodge substructure.  So we may assume without loss of generality that the basis $\{e_\m\}$ was chosen so that $\{ f_\m\}_{\m \in E(P')}$ spans $P'$, and $\{ f_\m\}_{\m \in E(P)}$ spans $P$, for some subsets $E(P') \subset E(P) \subset E_{\sfw-\sfc}$.  Now suppose that $f_\m \in P^{r,s}$.  Since $Q_M$ polarizes the Hodge structure on $P$, we have $\bi^{r-s} Q_M(f_\m,\overline{f_\m})  > 0$.  So if $f_\n \in (P')^{r,s}$, then $q_{M\n\oln} = Q_{M,1}(f_\n,f_{\oln}) \not=0$.  It follows that $|\n|^2$ is an eigenvalue of $\gamma : T_1 \to T_1$; that is, $|\n|^2=1$.
\end{proof}

%----------------------------------------------------------
\section{Discussion of Conjecture \ref{conj:GGLR}} \label{S:dconj}
%----------------------------------------------------------

\begin{theorem} \label{T:conj}
Conjecture \ref{conj:GGLR} holds if the holomorphic functions on $Z_I \cap X$ extend to holomorphic functions on $X$.
\end{theorem}

\begin{remark}
Extension results of the type desired typically require that $X$ be weakly pseudoconvex; work in this direction includes \cite{Robles-extnHnorm, Robles-pseudocnvx-eg}.
\end{remark}

%----------------------------------------------------------
\subsection{Outline of proof}
%----------------------------------------------------------

Recall that $\Le$ descends to $\olP'$ (Corollary \ref{C:descends}).  Suppose we can show that $\olP'$ is a normal Moishezon variety containing $\wp'$ as a Zariski open subset, and that the extension $\sPhiSBB$ of $\Phi'$ is a morphism of algebraic spaces.  It will then follow from \cite{GGLR} that $\Le$ is ample over $\olP'$, establishing Conjecture \ref{conj:GGLR}.

The set
\begin{equation}\label{E:eqrelR}
  R \ = \ 
  \{ (b_1,b_2) \in \olB \times \olB \ | \ \sPhiSBB(b_1) = \sPhiSBB(b_2) \}
\end{equation}
defines an equivalence relation on $\olB$ with the property that $\sPhiSBB : \olB \to \olP'$ is the quotient map.  Suppose that $R$ defines a proper, holomorphic equivalence relation on $\olB$.  Then \cite[\S3, Theorem 2]{MR719132} asserts that the quotient $\olP'$ is a compact, complex analytic variety, and the quotient map $\sPhiSBB$ is a proper holomorphic completion of $\Phi'$.  Since $\olB$ is projective (and therefore Moishezon) it follows that $\olP'$ is Moishezon \cite[\S5, Corollary 11]{MR673560}.  As Moishezon spaces are algebraic, Serre's GAGA implies $\sPhiSBB$ is a morphism, \cite[\S7]{MR260747}.

So the essential problem is to show that $R$ defines a proper, holomorphic equivalence relation.  Since $\left.\PhiSBB\right|_X$ is proper (Corollary \ref{C:top}), it suffices to show that the holomorphic functions on $X$ separate the fibres of $\left.\PhiSBB\right|_X$.

%----------------------------------------------------------
\subsection{Separation of fibres}
%----------------------------------------------------------

Bakker--Brunebarbe--Tsimerman \cite{MR4557401} have shown that $\wp = \Phi(B)$ is projectively embedded by sections of $\Xi_\mathrm{e}^{\ot m} \to \olB$ that vanish along $Z$, for a suitably large power $m$.  The arguments there apply to any line bundle of the form
\[
  \tdet(\cF^\sfn)^{\ot d_\sfn} \ot 
  \tdet(\cF^{\sfn-1})^{\ot d_{\sfn-1}} \ot \cdots \ot 
  \tdet(\cF^\sfk)^{\ot d_\sfk} \,,
\]  
with $0 < d_j \in \bZ$, including $\Le$.  We have seen that a multiple of $\Le$ is trivial over $X$ (Theorem \ref{T:triv}).  It follows that the holomorphic functions on $X$ will separate any fibre of $\left.\PhiSBB\right|_U$ from any other fibre of $\left.\PhiSBB\right|_X$.

By the same argument, $\wp_I = \PhiSBB(Z_I \cap Z_\pi)$ is projectively embedded by sections of $\Le^{\ot m} \to Z_I$ that vanish along $Z_I - (Z_I \cap Z_\pi)$, and holomorphic functions on $Z_I$ will separate any fibre of $\left.\PhiSBB\right|_{Z_I\cap Z_\pi\cap X}$ from any other fibre of $\left.\Phi^0\right|_{Z_I \cap X}$.

So to prove Conjecture \ref{conj:GGLR} it remains to show that holomorphic functions on $Z_I \cap X$ extend to all of $X$.  This completes the proof of Theorem \ref{T:conj}.  \hfill \qed

%----------------------------------------------------------
\appendix
%----------------------------------------------------------

%----------------------------------------------------------
\section{Example}
%----------------------------------------------------------

The Hilbert modular surface naturally carries Hodge theoretic interpretations \cite[III.5]{MR0457437}.  This example is a variation on the Hilbert modular surface.  The example constructed can be interpreted as parameterizing ($\Gamma$--conjugacy classes of) Hodge structures of weight one (\S\ref{S:eg-pol}), and Hodge structures of weight five (\S\ref{S:ben}).  The latter is especially interesting, as it hosts a rich collection of Hodge--theoretically meaningful line bundles, exhibiting a variety of behaviors relative to Corollary \ref{C:descends}: in some cases no (positive) power of the line bundle descends (Remark \ref{R:nopower}); in others the line bundle descends only after taking a power of four; and the line bundle $\Le$ itself, descends on the nose (Remark \ref{R:egdescends}).

%----------------------------------------------------------
\subsection{Definitions}
%----------------------------------------------------------

Set
\[
  K \ = \ \bQ(\sqrt{2},\bi) \,.
\]
The field is equipped with two pairs of conjugate complex embeddings 
\begin{equation}\label{E:embK}
  \s_1\,,\,\overline\s_1 \,,\ \s_2\,,\,\overline\s_2 
  \,:\, K \ \inj \ \bC \,.
\end{equation}

\begin{remark}\label{R:Kemb}
It will be convenient to assume that the embeddings have been indexed so that they are related as follows: if $k \in K$ and $\s_1(k) = a + b \sqrt{2} + c \bi$, with $a,b,c \in \bQ$, then $\s_2(k) = a - b \sqrt{2} + c \bi$.
\end{remark}

Define $u,\z \in K$ by 
\[
  \s_1(u) \ = \ 1 + \sqrt{2} \quad \hbox{and} \quad
  \s_1(\zeta) \ = \ \frac{\sqrt{2}}{2}(1 + \bi) \,.
\]
The ring of integers is 
\[
  \cO_K \ = \ \bZ[u,\zeta] \ = \ \bZ[\sqrt{2},\zeta] \,,
\]
cf.~\cite[\S2]{milneANT}.  The group of units $\cO_K^*$ includes both $u$ and the eighth root of unity $\z$.

%----------------------------------------------------------
\subsubsection{Multiplication on the lattice $\cO_K$} \label{S:lattmult}
%----------------------------------------------------------

Regard the ring $\cO_K$ of integers as a rank four lattice with $\bZ$--basis 
\[
  \{ \lambda_1,\lambda_2,\lambda_3,\lambda_4\} \ = \
  \{1,u , \zeta , u\zeta \} \,.  
\]
Multiplication by $\a\in\cO_K$ defines a $\bZ$--linear map 
\[
  \nu(\a)  \,:\, \cO_K \ \to \ \cO_K \,.
\] 
The matrix representation of $\nu(u) : \cO_K \to \cO_K$, with respect to the basis $\{\lambda_j\}$ is
\[
  \nu(u) \ = \ \left[ \begin{array}{cccc} 
  	0 & 1 & 0 & 0 \\ 
	1 & 2 & 0 & 0 \\
	0 & 0 & 0 & 1 \\
	0 & 0 & 1 & 2
  \end{array}\right] \,.
\]
The $\bZ$--linear maps $\nu(\a)$ define $\bC$--linear maps of the complexification $\cO_K \ot_\bZ \bC \simeq \bC^4$.  The eigenvalues of $\nu(u)$ are $u_1 = \s_1(u) = 1+\sqrt{2}$ and $u_2 = \s_2(u) = 1 - \sqrt{2}$.  And the $\nu(u)$--eigenspace decomposition of $\cO_K \ot_\bZ \bC$ is $E_{u_1} \op E_{u_2}$ with 
\begin{eqnarray*}
  E_{u_1} & = & \tspan_\bC\{ \lambda_1 + u_1\,\lambda_2 \,,\
  \lambda_3 + u_1\,\lambda_4 \} \\
  E_{u_2} & = & \tspan_\bC\{ \lambda_1 + u_2\,\lambda_2 \,,\
  \lambda_3 + u_2\,\lambda_4 \} \,.
\end{eqnarray*}
The operators $\nu(u)$ and $\nu(\zeta)$ commute.  So the $\bC$--linear action of $\nu(\zeta)$ on $\cO_K\ot_\bZ\bC$ preserves $\nu(u)$--eigenspaces $E_{u_j}$.  The matrix representation of $\nu(\zeta): E_{u_1} \to E_{u_1}$, with respect to the above basis of $\nu(u)$--eigenvectors is
\[
  \left.\nu(\z)\right|_{E_{u_1}} \ = \ 
  \left[ \begin{array}{cc}
    0 & -1 \\
    1 & (u_1-1)
  \end{array}\right] \,.
\]
So the $\nu(\z)$ eigenvalues of $E_{u_1}$ are $\z_1 = \s_1(\z) = \sqrt{2}(1+\bi)/2$ and $\overline\z_1 = \overline\s_1(\z) = \sqrt{2}(1-\bi)/2$.  The $\nu(\z)$--eigenspace decomposition is $E_{u_1} = E_{u_1,\z_1} \op E_{u_1,\overline\z_1}$ with 
\begin{eqnarray*}
  E_{u_1,\z_1} & = & \tspan_\bC\{ (\lambda_1 + u_1\,\lambda_2) 
  - \z_1(\lambda_3 + u_1\,\lambda_4) \} \\
  E_{u_1,\overline\z_1} & = & 
  \tspan_\bC\{ (\lambda_1 + u_1\,\lambda_2) 
  - \overline\z_1(\lambda_3 + u_1\,\lambda_4) \} \,.
\end{eqnarray*}
A similar computation of $\nu(\z) : E_{u_2} \to E_{u_2}$ show that the eigenvectors are $\z_2 = \s_2(\z) = - \sqrt{2}(1+\bi)/2 = -\z_1$ and $\overline\z_2 = \overline\s_2(\z) = - \sqrt{2}(1-\bi)/2 = -\overline\z_1$.  The corresponding eigenspaces are
\begin{eqnarray*}
  E_{u_2,\z_2} & = & \tspan_\bC\{ (\lambda_1 + u_2\,\lambda_2) 
  - \z_2(\lambda_3 + u_2\,\lambda_4) \} \\
  E_{u_2,\overline\z_2} & = & 
  \tspan_\bC\{ (\lambda_1 + u_2\,\lambda_2) 
  - \overline\z_2(\lambda_3 + u_2\,\lambda_4) \} \,.
\end{eqnarray*}
In summary, the simultaneous $(\nu(u),\nu(\zeta))$--eigenspace decomposition of $\cO_K \ot_\bZ \bC$ is 
\[
  \cO_K \ot_\bZ \bC \ = \ 
  E_{u_1,\z_1} \,\op\, E_{u_1,\overline\z_1} \,\op\,
  E_{u_2,\z_2} \,\op\, E_{u_2,\overline\z_2} \,.
\]

More generally, any $k \in K$ defines a $\bQ$--linear map $\nu(k) :\cO_K \ot_\bZ \bQ \to \cO_K \ot_\bZ \bQ$.  The map $\nu(k)$ commutes with both $\nu(u)$ and $\nu(\z)$.  So $\nu(k) : \cO_K \ot_\bZ \bC \to \cO_K \ot_\bZ \bC$ preserves each eigenspace $E_{u_j,\z_j}$, $E_{u_j,\overline \z_j}$, $j=1,2$.  These eigenspaces are one-dimensional; so $\nu(k)$ acts by eigenvalue.  The $\nu(k)$ eigenvalue of $E_{u_j,\z_j}$ is $\s_j(k)$, and the eigenvalue of $E_{u_j,\overline\z_j}$ is $\overline\s_j(k)$.  So it makes sense to redefine
\[
  E_{\s_j} \ = \ E_{u_j,\z_j} \tand
  E_{\overline\s_j} \ = \ E_{u_j,\overline\z_j} \,,\quad j=1,2 \,,
\]
and write
\[
  \cO_K \ot_\bZ \bC \ = \ 
  E_{\s_1} \,\op\, E_{\overline\s_1} \,\op\,
  E_{\s_2} \,\op\, E_{\overline\s_2} \,.
\]

%----------------------------------------------------------
\subsubsection{Multiplication on the lattice $V_\bZ = \cO_K\op\cO_K$}
%----------------------------------------------------------

Now regard 
\[
  V_\bZ \ = \ \cO_K \,\op\, \cO_K
\]
as a rank 8 lattice ($\simeq \bZ^8$).  As in \S\ref{S:lattmult}, multiplicition by $k \in K$ defines a $\bQ$--linear map $\mu(k) : V_\bQ \to V_\bQ$, where $V_\bQ = V_\bZ \ot_\bZ \bQ$.  If $\a \in \cO_K$, then $\mu(\a)$ preserves the lattice $V_\bZ \subset V_\bQ$.  Again, the complexification $V_\bC = V_\bZ \ot_\bZ \bC$ decomposes into a direct sum 
\begin{equation}\label{E:eigendecomp}
  V_\bC \ = \ V_{\s_1} \,\op\, V_{\overline\s_1} \,\op\,
  V_{\s_2} \,\op\, V_{\overline\s_2} 
\end{equation}
of simultaneous $\mu(k)$--eigenspaces defined by
\[
  V_{\s_j} \ = \ E_{\s_j} \,\op\, E_{\s_j} \tand
  V_{\overline\s_j} \ = \ E_{\overline\s_j} \,\op\, 
  E_{\overline\s_j} \,,\quad j=1,2\,,
\] 
as subspaces of $V_\bC \simeq (\cO_K\ot_\bZ \bC) \op (\cO_K\ot_\bZ \bC)$.

Any $g \in \tGL_2(K)$ defines a $\bQ$--linear action $g : V_\bQ \to V_\bQ$.  If $g \in \tGL_2(\cO_K)$, then the action preserves the lattice $V_\bZ$.  The action of $g$ commutes with the $\mu(k)$, and so preserves the eigenspaces $V_{\s_j}$ and $V_{\overline\s_j}$, $j=1,2$.  The induced embedding
\begin{equation}\label{E:embG}
  (\s_1,\s_2) : \tGL_2(K) \ \inj \ 
  \tGL_2(\bC) \,\times\, \tGL_2(\bC) \ \simeq \ 
  \tGL(V_{\s_1}) \,\times\, \tGL(V_{\s_2})
\end{equation}
is the matrix representation for the action of $\tGL_2(K)$ on $V_{\s_1} \op V_{\s_2}$ (with respect to the eigenbasis).

%----------------------------------------------------------
\subsubsection{Hermitian forms}
%----------------------------------------------------------

Represent elements of $K^2 = K \op K$ as column vectors $u = \left[  u_1 \ u_2 \right]^\mathsf{t}$.  Define a hermitian form of signature $(1,1)$ on $K^2$ by specifying 
\[
  h(u,v) \ = \ u{}^\mathsf{t} \, \sfh \, \overline{v} 
  \ = \ 
  u_1\,\overline{v}_2 \,+\, u_2\,\overline{v}_1\,,
\]
with matrix representation
\[
  \sfh \ = \ \left[ \begin{array}{cc} 0 & 1 \\ 1 & 0 
  \end{array} \right] \,.
\]
Define
\[
  \tU(K^2,\sfh) \ = \ \{ g \in \tGL_2(K) \ | \ 
  \overline{g}{}^\mathsf{t} \, \sfh \, g \,=\, \sfh \}
\]
and
\[
  \Gamma \ = \ \tU(K^2,\sfh) \,\cap\,\tGL_2(\cO_K) \,.
\]
Define a hermitian form $H$ of signature $(2,2)$ on $V_{\s_1}\op V_{\s_2}$ by specifying that it have matrix representation
\[
  \sfH \ = \ \left[ \begin{array}{cc|cc} 0 & 1 & &  \\ 
  	1 & 0 & & \\ \hline
	& & 0 & 1 \\
	& & 1 & 0
  \end{array} \right] \,.
\]
The restriction of the embedding \eqref{E:embG} yields
\[
  (\s_1,\s_2) \,:\, \tU(K^2,\sfh) \ \inj \ 
  \tU(1,1) \,\times\, \tU(1,1) \,.
\]  
There is a $\bQ$--structure on $G_\bR = \tU(1,1) \,\times\, \tU(1,1)$ with $\bQ$--rational points $G_\bQ = \tU(K^2,\sfh)$, and so that $\Gamma$ is an arithmetic subgroup.

%----------------------------------------------------------
\subsubsection{Polarization} \label{S:eg-pol}
%----------------------------------------------------------

Let $N \in \fgl(V_\bZ)$ be the image of 
\[
  \mathbf{n} \ = \ \left[ \begin{array}{cc}
  0 & 0 \\ 1 & 0 \end{array} \right]
  \ \in \ \fgl_2(K)
\]
under the embedding 
\[
  (\s_1,\overline\s_1 , \s_2 , \overline\s_2) :
  \fgl_2(K) \ \inj \ 
  \fgl(V_{\s_1}) \,\op\, \fgl(V_{\overline\s_1}) \,\op\,
  \fgl(V_{\s_2}) \,\op\, \fgl(V_{\overline\s_2}) \,.
\]
Define a nondegenerate, skew-symmetric bilinear form $Q : V_\bZ \,\times\, V_\bZ \ \to \ \bZ$ as follows.  Define a direct sum decomposition $V_\bC = U \op U_\infty$ by 
\[
  V_\bC \ = \ V_\bZ \ot_\bZ\bC \ \simeq \ 
  (\cO_K \ot_\bZ \bC) \ \op \ (\cO_K \ot_\bZ \bC)
  \ = \ U \ \op \ U_\infty \,.
\]
Note that the eigenspaces inherit this decomposition:
\[
  V_{\s_j} \ = \ 
  (V_{\s_j} \,\cap\,U) \ \op \ (V_{\s_j} \,\cap\,U_\infty)
  \tand
  V_{\overline\s_j} \ = \ 
  (V_{\overline\s_j} \,\cap\,U) \ \op \ 
  (V_{\overline\s_j} \,\cap\,U_\infty)
\]
We also have
\[
  \tim\,N \ = \ N(U) \ = \ U_\infty \ = \ \tker\,N \,.
\]
Define the polarization $Q$ by 
\[
  0 \ = \ \left. Q \right|_{V_{\s_1} \op V_{\s_2}}
  \ = \ \left. 
  	Q \right|_{V_{\overline\s_1} \op V_{\overline\s_2}} 
  \ = \ \left. Q \right|_U \ = \ \left. Q \right|_{U_\infty}
\]
and
\begin{eqnarray*}
  H(u,Nu) & = & Q(u,N\overline u) \ = \ -Q(N\overline u,u) \\
  & = & Q(\overline u , N u) \ = \ -Q(Nu,\overline u) \,,
  \quad \ \forall \ u \in U \,\cap\,(V_{\s_1} \op V_{\s_2}) \,.
\end{eqnarray*}

Define a second hermitian form $\sQ$ on $V_{\s_1} \op V_{\s_2}$ by specifying $\sQ(u,v) = \bi Q(u,\overline v)$.  There is a natural identification of the upper half-plane $\sH$ with $\{ \ell \in \bP(V_{\s_j}) \ | \ \sQ(\ell,\ell) > 0 \}$.  In particular, we may regard $\sH$ as parameterizing \emph{real}, weight one, $Q$--polarized Hodge structures on $V_\bR \,\cap\, (V_{\s_j} \op V_{\overline\s_j}) \simeq \bR^4$ with Hodge numbers $(2,2)$.  The product 
\[
  D \ = \ \sH \,\times\, \sH
\]
parameterizes \emph{integral}, weight one, $Q$--polarized Hodge structures on $V_\bZ \simeq \bZ^8$ with Hodge numbers $(4,4)$.  

%----------------------------------------------------------
\subsection{Discussion of Theorems \ref{T:normone} \& \ref{T:chi-infty}: weight one example} \label{eg:wt1}
%----------------------------------------------------------

Without loss of generality 
\[
  B \ = \ \Gamma \bs D
\]
is smooth.  (If necessary, replace $\Gamma$ with a finite index neat subgroup.)  The Baily--Borel compactification $B^* \supset B$ realizes $B$ as a quasi-projective variety \cite{MR0216035}.  Fix a  toroidal desingularization $\olB \to B^*$, \cite{MR0457437}.  If we take $\Phi : B \to \Gamma \bs D$ to be the identity, then $\PhiSBB$ is the map $\olB \to B^*$
\[ \begin{tikzcd}
  B \arrow[r,hook] \arrow[rd,hook]
  & \olB \arrow[d,"\PhiSBB"] \\
  & \ B^* .
\end{tikzcd} \]

The rational parabolic
\[
  P_{\infty,\bQ} \ = \ \left\{ \left.
  \left[ \begin{array}{cc} a & 0 \\ c & \overline a{}^{-1}
  \end{array} \right] \ \right| \ 
  \begin{array}{ll}
    a,c \in K \\
    \overline{a} c - a \overline{c}=0
  \end{array} \right\}
  \ \subset \ \tU(K^2,\sfh)
\]
stabilizing $U_\infty = \tim\,N$.  Let $A \subset \olB$ denote the $\PhiSBB$ fibre over the cusp.  In this case $F^1_\infty = U_\infty$, and 
\[
  \monzero \ = P_{\infty,\bQ} \,\cap\, \Gamma \ = \ 
  \left\{ \left.
  \left[ \begin{array}{cc} a & 0 \\ c & \overline a{}^{-1}
  \end{array} \right] \ \right| \ 
  \begin{array}{ll}
    a,a^{-1},c \in \cO_K \\
    \overline{a} c - a \overline{c}=0
  \end{array} \right\} \,.
\]
The pair $(N,F^1 = U)$ defines a limiting mixed Hodge structure $(W,F,N)$, as in \S\ref{S:LMHS}, with 
\[
  F_\infty \ = \ \lim_{y\to \infty} \exp(\bi yN) \cdot F \,.
\]
The limiting mixed Hodge structures arising along $A$ are all of the form $(W,F',N')$ with weight filtration $W = W(N) = W(N')$ given by $W_0(V) = W_1(V) = U_\infty$ and $W_2(V) = V$; and Hodge filtration $F'$ lying in the $P_{\infty,\bC}$--orbit of $F$.

We have $\Le = \tdet(\cFe^1)$.  The character $\chi_\infty : \Gamma_\infty \to \bC^*$ of \S\ref{S:chi-infty} is $\chi_\infty(g) = \tdet\{ g : U_\infty \to U_\infty\}$.  If 
\[
  g \ = \ \left[ \begin{array}{cc} a & 0 \\ c & \overline a{}^{-1}
  	 \end{array} \right] \ \in \ \monzero \,,
\]
then $\chi_\infty(g) = |\sigma_1(a)\,\sigma_2(a)|^{-2}$.  It follows from \cite[\S4.4, Proposition 1]{MR265266} that $\chi_\infty(g) = 1$.  This verifies both Theorems \ref{T:normone} and \ref{T:chi-infty}.

%----------------------------------------------------------
\subsection{Discussion of Theorems \ref{T:normone} \& \ref{T:chi-infty}: weight five example} \label{S:ben}
%----------------------------------------------------------

This example is a ``twist'' of the previous.  As discussed in \S\ref{S:eg-pol}, one of the factors $\sH$ in $D = \sH \times \sH$ parameterizes real, weight one, $Q$--polarized Hodge structures on $V_\bR \,\cap\, (V_{\s_2} \op V_{\overline\s_2})$ with Hodge numbers $(2,2)$.  We may instead regard it as parameterizing real, weight five, $Q$--polarized Hodge structures with Hodge numbers $(0,0,2,2,0,0)$.  We may similarly regard the other factor as parameterizing real, weight five, $Q$--polarized Hodge structures on $V_\bR \,\cap\, (V_{\s_1} \op V_{\overline\s_1})$ with Hodge numbers $(1,1,0,0,1,1)$.  Then $D$ will parameterize integral, weight five, $Q$--polarized Hodge structures on $V_\bZ$ with Hodge numbers $(1,1,2,2,1,1)$.

With the same $U_\infty$, $P_\infty$, $W$ and $N$ as in \S\ref{eg:wt1}, the cusp point now corresponds to the filtration $F_\infty \in \check D$ given by 
\[
  F_\infty^5 \ = \ V_{\s_1} \,\cap\, U_\infty  \,, \quad
  F_\infty^4 \ = \ V_{\s_1} \tand
  F_\infty^3 \ = \ V_{\s_1} \ \op \ 
  \left[(V_{\s_2} \op V_{\overline\s_2}) \,\cap\, U_\infty\right] \,.
\]
If we redefine $F \in \check D$ as 
\[
  F^5 \ = \ V_{\s_1} \,\cap\, U \,,\quad 
  F^4 \ = \ V_{\s_1} \tand
  F^3 \ = \ V_{\s_1} \ \op \  
  \left[(V_{\s_2} \op V_{\overline\s_2}) \,\cap\, U_\infty\right]\,,
\]
then the triple $(W,F,N)$ is a limiting mixed Hodge structure.

We have $\Le = \tdet(\cFe^5)^{\ot2} \ot \tdet(\cFe^4)^{\ot 2} \ot \tdet(\cFe^3)$.  The character $\chi_\infty : \monzero \to \bC^*$ is given by \eqref{E:chi-infty} as
\[
  \chi_\infty \ = \ 
  \left( \chi_\infty^5 \, \chi_\infty^4 \right)^2 \,
  \chi_\infty^3 \,,
\]
with 
\[
  \chi_\infty^p (g) \ = \ \tdet\{ g : F^p_\infty \to F^p_\infty \}
\]
for all 
\[
  g \ = \ \left[ \begin{array}{cc} a & o \\ c & \overline a{}^{-1}
  	 \end{array} \right] \ \in \ \monzero \,.
\]
We have 
\begin{eqnarray*}
  \chi_\infty^5(g) & = & \s_1(\overline a{}^{-1}) 
  \ = \ \overline\s_1(a)^{-1}\\
  \chi_\infty^4(g) & = & \overline\s_1(a)^{-1}\,\s_1(a) \\
  \chi_\infty^3(g) & = & 
  \overline\s_1(a)^{-1}\,\s_1(a)\,\s_2(a)^{-1}\,\overline\s_2(a)^{-1} \,.
\end{eqnarray*}
And \cite[\S4.4, Proposition 1]{MR265266} implies that 
\begin{eqnarray*}
  | \chi_\infty(g) | & = & 
  | \overline\s_1(a)^{-5} \,\s_1(a)^3\,\s_2(a)^{-1}\,
  	\overline\s_2(a)^{-1} |
  \ = \ |\s_1(a) \,\s_2(a) |^{-2} \ = \ 1 \,.
\end{eqnarray*}
This verifies Theorem \ref{T:normone}.

\begin{remark}[Line bundles that do not descend] \label{R:nopower}
In general, neither $\chi_\infty^5(g)$ nor $\chi_\infty^3(g)$ need have norm one.  This implies that no power of the line bundle $\tdet(\cFe^3)$ or $\tdet(\cFe^5)$ will descend to $\cP'$.  For example, suppose that matrix entry $a$ of $g$ is the unit $u \in \cO_K^*$ given by $\s_1(u) = 1 + \sqrt{2}$.  In this case we have
\begin{eqnarray}
  \nonumber
  \chi_\infty^5(g) & = & \sqrt{2}-1 \\
  \label{E:detu}
  \chi_\infty^4(g) & = & (\sqrt{2}-1)\,(\sqrt{2}+1) = 1 \\
  \nonumber
  \chi_\infty^3(g) & = & 
  (\sqrt{2}-1)\,(\sqrt{2}+1)\,(-\sqrt{2}-1)^2
  \ = \ 3+2\sqrt{2} \,.
\end{eqnarray}
So no power of $\tdet(\cFe^3)$ or $\tdet(\cFe^5)$ will descend.

Similarly, in order for some (positive) power of the line bundle $\Xi_\mathrm{e} = \tdet(\cFe^5)\ot\tdet(\cFe^4)\ot\tdet(\cFe^3)$ to descend to $\olP'$, it is necessary that the character $\chi_\infty^5\,\chi_\infty^4\,\chi_\infty^3 : \monzero \to \bC^*$ takes value in the unit circle $S^1$.  Continuing with the computations of \eqref{E:detu}, we see that $\chi_\infty^5(g)\,\chi_\infty^4(g)\,\chi_\infty^3(g) = 1-\sqrt{2}$.  So no power of $\Xi_\mathrm{e}$ will descend.
\end{remark}

\begin{remark}[The line bundle $\Le$ descends]\label{R:egdescends}
Continuing with the computation of \eqref{E:detu}, we see that 
\begin{equation}\label{E:chiu}
  \chi_\infty(g) \ = \ 
  (\sqrt{2}-1)^2 \,(1)^2\,(3+\sqrt{2}) \ = \ 1\,,
\end{equation}
when the matrix entry $a = u$.  If the matrix entry $a$ is the eighth root of unity $\zeta$ given by $\s_1(\z) = \sqrt{2}(1+\bi)/2$, then 
\begin{eqnarray}
  \nonumber
  \chi_\infty^5(g) & = & \s_1(\zeta) \\
  \label{E:detz}
  \chi_\infty^4(g) & = & \s_1(\zeta)^2 \ = \ \bi \\
  \nonumber
  \chi_\infty^3(g) & = & \bi \,;
\end{eqnarray}
and 
\begin{equation}\label{E:chiz}
  \chi_\infty(g) \ = \ 
  \s_1(\zeta)^2 \,(\bi)^2\,\bi \ = \ 1\,.
\end{equation}
One may check (for example, by a computation with Sage Math or PARI/GP) that the group of units $\cO_K^*$ in the ring of integers $\cO_K$ is generated by $u$ and $\zeta$.  Keeping Remark \ref{R:chibeta} in mind, it follows from \eqref{E:chiu} and \eqref{E:chiz} that the line bundle $\Le$ descends to $\olP'$.  Likewise, \eqref{E:detu} and \eqref{E:detz} imply that the line bundle $\tdet(\cFe^4)^{\ot4}$ descends to $\olP'$.
\end{remark}

%----------------------------------------------------------
\section{Compatibility of weight closures} \label{S:top-c}
%----------------------------------------------------------

The purpose of this section is to review compatibility properties between the weight filtrations $W^I = W(\s_I)$, and discuss some of the implications for local lifts of period maps.  These local results will have global consequences, including the following corollary of Lemma \ref{L:tedI}.

\begin{lemma}\label{L:PhiI}
The maps $\Phi_I : Z_I^* \to \Gamma_I \bs D_I$ of \eqref{E:P0I} extend to proper holomorphic maps on $Z_I \cap Z_\pi$.  These extensions are compatible with the $\Phi_J$ on $Z_J^* \subset Z_I \cap Z_\pi$ in the sense that the we have a commutative diagram
\begin{equation}\label{E:PhiIextn}
\begin{tikzcd}
  Z_J^* \arrow[r,hook] 
  \arrow[d,"\Phi_J"']
  & Z_I \cap Z_\pi
  \arrow[d,"\Phi_I"] \\
  \Gamma_J \backslash D_J \arrow[r]
  & \Gamma_I \backslash D_I \,.
\end{tikzcd}\end{equation}
\end{lemma}

%----------------------------------------------------------
\subsection{The commuting $\fsl(2)$'s} 
%----------------------------------------------------------

Our constructions are defined over the open strata $Z_I^*$, and we will need to see that these strata-wise constructions satisfy certain compatibility conditions in order to establish Lemma \ref{L:PhiI}.  The conditions are consequences of the $\tSL(2)$ orbit theorem \cite{\CKS}.  We briefly review the theorem, and then discuss consequences.

Suppose that $Z_J \subset Z_I$; equivalently, $I \subset J$.  To begin we assume that we have a local coordinate chart centered at $b \in Z_J^*$ with local monodromy cone $\s_J$ generated by $N_1,\ldots, N_k$ as in \S\ref{S:vhs}.  Given $I \subset J = \{1,\ldots,k\}$, let $\s_I$ be the face of $\s_J$ generated by the $N_i$, with $i \in I$.  Define
\[
  N_I \ = \ \sum_{i\in I} N_i 
  \tand
  N_J \ = \ \sum_{j\in J} N_j \,.
\]
Given this data, the $\tSL(2)$ orbit theorem \cite{\CKS} produces \emph{commuting $\fsl_2$--pairs}
\[
  N_I , Y_I \,;\ \hat N_J , \hat Y_J \ \in \ \fg_\bR \,.
\]
These pairs have following properties: $N_I$ and $Y_I$ commute with $\hat N_J$ and $\hat Y_J$; and there is a $(Y_I,\hat Y_J)$--eigenspace decomposition $\fg_\bC = \op\,\fg_{a,b}$, 
\[
  \fg_{a,b} \ = \ \{ \xi \in \fg_\bC \ | \ [Y_I ,\xi] = a \xi \,,\ 
  [\hat Y_J ,\xi] = b \xi \} \,,
\]
with integer eigenvalues $a,b$ that splits the weight filtrations
\begin{equation}\label{E:WIWJ}
  W_\ell (\s_I) (\fg_\bC) \ = \ \bigoplus_{a \le \ell} \fg_{a,b} 
  \tand
  W_\ell (\s_J) (\fg_\bC) \ = \ \bigoplus_{a+b\le\ell}\fg_{a,b} \,.
\end{equation}
We have
\[
  N_I \ \in \ \fg_{-2,0}
  \tand
  N_J \ \in \ \bigoplus_{a\le0} \fg_{a,-a-2} \,.
\]
If we write 
\begin{subequations}\label{SE:NJ}
\begin{equation}\label{E:NJ}
  N_J \ = \ \sum_{a\le0} N_{J,a} \,,
\end{equation}
with $N_{J,a} \in \fg_{a,-a-2}$, then 
\begin{equation}\label{E:hatNJ}
  N_{J,0} \ = \ \hat N_J \,.
\end{equation}
\end{subequations}

%----------------------------------------------------------
\subsection{Consequences for the local lifts of $\Phi_I$}
%----------------------------------------------------------

Composing the map $F_I : Z_I^* \cap \olU \to M_I$ of \eqref{E:FI} with the projection $M_I \sur D_I$ of \S\ref{S:PhiIgr} defines a local period map $\tgr(F_I) : Z_I^* \cap \olU \to D_I$.  This $\tgr(F_I)$ is the local lift of the map $\Phi_I$ defined in \S\ref{S:PhiIgr}.

Since $I \subset J$, we have $Z_J^* \subset Z_I$.  Fix a coordinate neighborhood $(t,w)\in \olU \subset \olB$ so that $Z_J^* \cap \olU =\{t=0\}$.  

\begin{lemma}\label{L:rwfp}
Suppose that $(t_m,w_m)$ and $(t_m',w_m')$ are two sequences in $Z_I^* \cap \olU$ converging to points $(0,w_\infty)$ and $(0,w'_\infty) \in Z_J^*\cap \olU$, respectively.  If $\tgr(F_I)(t_m,w_m) = \tgr(F_I)(t'_m,w_m')$ for all $m$, then $\tgr(F_J)(0,w) = \tgr(F_J)(0,w')$.
\end{lemma}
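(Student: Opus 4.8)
The statement is a limiting/asymptotic rigidity assertion: it says that the extension-data maps $F_I^\alpha$ (which are only defined modulo $\exp(\bC\s_I)$, but pushed into $D_I^\alpha$ with $\alpha\in\{0,1\}$ are well-defined, cf.~\eqref{E:FIiota}) are compatible with the further degeneration along $Z_J^*$, when $I\subset J$. The strategy is to use the $\tSL(2)$-orbit theorem machinery of \S\ref{S:sl2}, which provides the commuting $\fsl_2$-pairs $(N_I,Y_I)$ and $(\hat N_J,\hat Y_J)$ and the bigrading $\fg_\bC=\oplus\fg_{a,b}$ splitting both $W^I$ and $W^J$, to pass the identity $F_I^\alpha(t_m,w_m)=F_I^\alpha(t'_m,w'_m)$ to the limit. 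First I would recall (from \S\ref{S:not-loclift}) the local form of the lift along $Z_I^*$: writing $\olU$ with $Z_J^*\cap\olU=\{t=0\}$ and taking $(t,w)$ with the first $|J|$ coordinates cutting out the components through $b$, the restriction of $\tPhi$ to $Z_I^*$ is governed by $\xi_I(w)=\xi(0,w)$ taking values in $C_{I,\bC}\cap\exp(\ff^\perp)$, and $F_I^\alpha$ is obtained from $F_I=\xi_I\cdot F$ by projecting $D_I\sur D_I^\alpha$. The hypothesis $F_I^\alpha(t_m,w_m)=F_I^\alpha(t'_m,w'_m)$ says precisely that $\xi_I$ evaluated at the two sequences agrees modulo $C_{I,\bC}^{-\alpha-1}$.

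\textbf{Key steps.} (1) Use the $\tSL(2)$-orbit theorem to re-express $F_I^\alpha$ near the deeper stratum $Z_J^*$: the point is that as $(t_m,w_m)\to(0,w_\infty)$ along $Z_I^*$, the limiting mixed Hodge structure along $Z_I^*$ degenerates further to the one along $Z_J^*$, and the ``distance to the boundary'' in the extra directions $t_j$, $j\in J\setminus I$, is controlled by $\hat N_J$ and $\hat Y_J$. Concretely, by \eqref{E:WIWJ} and \eqref{SE:NJ}, the weight filtration $W^J$ refines $W^I$ in a way compatible with the second grading index $b$, and the quotient $C_{I,\bC}^{-a-1}\backslash(\cdots)$ degenerates compatibly. (2) Show that the limit $F_J^\alpha(0,w_\infty)$ depends only on the data of $F_I^\alpha$ near $w_\infty$ — i.e., that the map ``$F_I^\alpha$ along $Z_I^*$, take the limit to $Z_J^*$'' factors through $F_J^\alpha$. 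This is where the bigrading is essential: the grading index $a$ (the $Y_I$-weight) is exactly what the projection $D_I\to D_I^\alpha$ retains up to level $\alpha$, and the SL(2)-orbit theorem guarantees that the $\hat Y_J$-direction, which records the passage from $Z_I^*$ to $Z_J^*$, acts compatibly. (3) Apply continuity: if $F_I^\alpha$ agrees on the two sequences for every $m$, then passing to the limit (and using that the projections $D_I\sur D_I^\alpha$ and the identifications of extension data in \S\ref{S:not-affstr} are continuous/holomorphic) yields $F_J^\alpha(0,w_\infty)=F_J^\alpha(0,w'_\infty)$.

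\textbf{Main obstacle.} The delicate point is step (2): making precise that the limit of $F_I^\alpha$ along $Z_I^*$ "is" $F_J^\alpha$, i.e.~that the diagram of level-$\le\alpha$ extension-data spaces $\Gamma_I\backslash D_I^\alpha \to \Gamma_J\backslash D_J^\alpha$ is compatible with the respective local lifts and their boundary limits. This requires the SL(2)-orbit theorem estimates \cite{\CKS} in the multivariable setting to control the $\xi$-part uniformly as $t_j\to 0$ for $j\in J\setminus I$; in particular one needs that the "$\ff^\perp$-coordinate" $\tilde X$ (cf.~\eqref{E:tX}) has a well-defined limit on the relevant graded pieces, which is exactly the content of the holomorphicity statement $\tilde X^{-1,\smallb}\in\cO(\olU)$. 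I would organize this as a lemma (call it Lemma \ref{L:tedW}, referenced in \S\ref{S:top-c}) isolating the compatibility of the $C_I^{-a}$-filtration with the $W^J$-refinement via \eqref{E:WIWJ}, and then the proof of Lemma \ref{L:rwfp} becomes: rewrite both sides using that lemma, observe the hypothesis forces agreement of the finitely many graded coordinates up to level $\alpha$, and take limits. The genuinely hard calculation — which I would not grind through here — is verifying the uniform boundedness/convergence of the relevant components of $\xi$ as one approaches $Z_J^*$ from within $Z_I^*$, which is where the full strength of the several-variable $\tSL(2)$-orbit theorem is used.
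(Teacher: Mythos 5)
Your proposal correctly identifies several necessary ingredients: the $\tSL(2)$-orbit bigrading $\fg_\bC=\bigoplus\fg_{a,b}$ that simultaneously splits $W^I$ and $W^J$ (cf.~\eqref{E:WIWJ}), the decomposition $N_J=\sum_{a\le0}N_{J,a}$ with $\hat N_J=N_{J,0}$ (cf.~\eqref{SE:NJ}), and the holomorphicity of $\xi$ on $\olU$. But you mislocate the remaining work. You claim the difficult step is analytic — "verifying the uniform boundedness/convergence of the relevant components of $\xi$ ... which is where the full strength of the several-variable $\tSL(2)$-orbit theorem is used." That is not where the argument lives, and no such estimates appear. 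The function $\xi$ is holomorphic on all of $\Delta^{k+r}$ by the nilpotent orbit theorem (\S\ref{S:not-loclift}), so $\xi(t_m,w_m)\to\xi(0,w_\infty)$ and $\xi(t'_m,w'_m)\to\xi(0,w'_\infty)$ automatically; passing the hypothesis to the limit is immediate and costs nothing.

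What actually remains once the limit is taken is purely Lie-theoretic. One must upgrade agreement of the two limit filtrations modulo $C^{-\alpha-1}_{I,\bC}$ to agreement modulo $C^{-\alpha-1}_{J,\bC}$, and the paper shows this reduces to the inclusion
\[
  W^I_\ell(\fc_J)\ \subset\ W^J_\ell(\fc_J)\,.
\]
Proving this inclusion is the substance of the lemma, and it is a finite algebraic computation, not an estimate. From $\fc_J\subset W^J_0(\fg_\bC)$ and $\fc_J\subset\fc_I\subset W^I_0(\fg_\bC)$ one gets $\fc_J\subset\bigoplus_{a\le0,\;a+b\le0}\fg_{a,b}$. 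Then for $X\in W^I_\ell(\fc_J)$ with $\ell<0$, one shows inductively, descending in the $Y_I$-weight $a$ from $a=\ell$, that the bigraded components $X_{a,b}$ with $a+b>\ell$ all vanish. The induction is driven by two facts: the leading $(a,b)$-graded piece of the identity $N_J(X)=0$ isolates $\hat N_J(X_{a,b})=0$, because $\hat N_J=N_{J,0}$ is the only summand of $N_J$ that does not strictly lower the $a$-weight; and the centralizer of $\hat N_J$ lies in $\bigoplus_{b\le0}\fg_{a,b}$ by elementary $\fsl_2$-theory applied to the pair $\{\hat N_J,\hat Y_J\}$. Together these kill the offending components one $a$-level at a time. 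You also point at Lemma \ref{L:tedW} as your auxiliary "compatibility lemma," but that is a different statement (the existence of the extended local lift $\tPhi_W$). The statement you actually need to isolate is the displayed Lie-algebra inclusion; without it, and without recognizing that the rest is automatic continuity rather than orbit-theorem estimates, the proposal has the scaffolding of the argument but not its core.
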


\noindent This lemma is the analog of Theorem \ref{T:top} for the local lift of $\PhiSBB$, and implies that this lift is continuous.

\begin{proof}
Given $(t,w) \in Z_I^* \cap \olU$, the map $\tgr(F_I)(t,w)$ is the composition of $F_I(t,w) = \tilde g(t,w) \cdot F$ with the projection $M_I \sur D_I$.  Moreover, $\tilde g(t,w)$ is holomorphic (and therefore continuous) on $\Delta^{k+r}$, and takes value in $C_{I,\bC}$ when restricted to $Z_I^* \cap \olU$.  So to prove the lemma, it suffices to show that 
\begin{equation}\label{E:rwfp3}
  W_\ell(\s_I)(\fc_J) \ \subset \ W_\ell(\s_J)(\fc_J) \,.
\end{equation}

The subspace $W_0(\s_J)(\fg) \subset \fg$ is the Lie algebra of the parabolic subgroup $P_{W(\s_J)} \subset G$ stabilizing the weight filtration.  So the centralizers satisfy $\fc_J \subset W_0(\s_J)(\fg_\bC)$ and $\fc_J \subset \fc_I \subset W_0(\s_I)(\fg_\bC)$.  Then \eqref{E:WIWJ} implies
\begin{equation} \label{E:rwfp1}
  \fc_J \ \subset \ \bigoplus_{\mystack{a\le0}{a+b\le0}} \fg_{a,b} \,.
\end{equation}
Note that \eqref{E:rwfp1} implies the desired \eqref{E:rwfp3} for $\ell\ge0$.  

Suppose that $X \in W_\ell(\s_I)(\fc_J)$ for some $\ell < 0$.  Then \eqref{E:WIWJ} and \eqref{E:rwfp1} imply that there exists unique $X_{a,b} \in \fg_{a,b}$ so that 
\[
  X \ = \ \sum_{\mystack{a\le\ell}{a+b\le 0}} X_{a,b} \,.
\]
In order to establish \eqref{E:rwfp3}, we need to show 
\begin{equation}\label{E:rwfp5}
   X_{a,b} \ = \ 0 \quad\hbox{for all } \ a+b > \ell \,.
\end{equation}
From $N_J(X) = 0$ and \eqref{SE:NJ} we see that $\hat N_J(X_{\ell,b}) = 0$.  Since $\{\hat N_J, \hat Y_J\}$ is an $\fsl_2$--pair, the centralizer $\fc(\hat N_J)$ of $\hat N_J$ satisfies
\begin{equation}\label{E:rwfp2}
  \fc(\hat N_J) \ \subset \ \bigoplus_{b\le0} \fg_{a,b} \,.
\end{equation}
This forces $X_{\ell,b}=0$ for all $b > 0$, and yields the desired \eqref{E:rwfp5} for $a=\ell$.

Working inductively, fix $m < \ell < 0$ and assume that \eqref{E:rwfp5} holds for all $m < a \le \ell$.   Again, $N_J(X)=0$ and \eqref{SE:NJ} implies $\hat N_J(X_{m,b})=0$ for all $m+b > \ell$.  Since $b > \ell-m > 0$, \eqref{E:rwfp2} implies $X_{m,b} = 0$ for all $m+b > \ell$.  This establishes the desired \eqref{E:rwfp5} for $a=m$ and completes the induction.
\end{proof}

%----------------------------------------------------------
\subsection{When weight filtrations coincide} \label{S:W=W}
%----------------------------------------------------------

The properties \eqref{E:WIWJ} and \eqref{E:hatNJ} yield

\begin{lemma}\label{L:W=W}
Suppose that $I \subset J$.  The following are equivalent:
\begin{i_list_emph}
\item 
The weight filtrations coincide $W(\s_I) = W(\s_J)$.
\item 
We have $\hat Y_J=0$.
\item
We have $\hat N_J=0$.
\item \label{i:c}
The cone $\s_J \subset \fc^{-1}_I$.
\end{i_list_emph}
\end{lemma}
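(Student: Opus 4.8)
The plan is to prove Lemma~\ref{L:W=W} by establishing the cycle of implications (i)$\Rightarrow$(ii)$\Rightarrow$(iii)$\Rightarrow$(iv)$\Rightarrow$(i). The workhorse throughout is the structure produced by the $\tSL(2)$ orbit theorem as recorded in \S\ref{S:sl2}, especially the bigrading $\fg_\bC = \op\,\fg_{a,b}$ and the two formulas \eqref{E:WIWJ} and \eqref{SE:NJ}.

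First, (i)$\Rightarrow$(ii): assume $W^I = W^J$. Comparing the two descriptions in \eqref{E:WIWJ}, the filtration $W^I_\ell(\fg_\bC) = \op_{a\le\ell}\fg_{a,b}$ equals $W^J_\ell(\fg_\bC) = \op_{a+b\le\ell}\fg_{a,b}$ for every $\ell$. A filtration determines its associated graded, and here the graded pieces are $\op_{a=\ell}\fg_{a,b}$ versus $\op_{a+b=\ell}\fg_{a,b}$; these agree for all $\ell$ only if $\fg_{a,b}=0$ whenever $b\neq0$. Since $\hat Y_J$ acts on $\fg_{a,b}$ by the scalar $b$, this forces $\hat Y_J$ to act as $0$ on all of $\fg_\bC$, hence $\hat Y_J = 0$. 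Next, (ii)$\Rightarrow$(iii) is immediate: $\{\hat N_J,\hat Y_J\}$ is an $\fsl_2$-pair with $[\hat Y_J,\hat N_J] = -2\hat N_J$, so $\hat Y_J=0$ forces $\hat N_J=0$. For (iii)$\Rightarrow$(iv): by \eqref{E:hatNJ} we have $N_{J,0} = \hat N_J = 0$, so the expansion \eqref{E:NJ} reads $N_J = \sum_{a<0}N_{J,a}$. One then wants to conclude that each $N_j$, $j\in J$, already centralizes $\s_I$, i.e.\ lies in $\fc_I^{-1}$ — here I would use that $\s_J$ is a single face with $N_J$ in its relative interior together with the $\tSL(2)$-orbit-theorem compatibility of the splittings across all faces of $\s_J$; alternatively, the cleanest route is to observe that $\hat N_J = 0$ means the "new" monodromy direction at $Z_J^*$ relative to $Z_I^*$ is trivial, which is exactly the assertion that $W(\s_J) = W(\s_I)$ on the level of the associated graded, forcing $\s_J\subset\fc_I^{-1}$ (cf.\ the proof of Lemma~\ref{L:rwfp}, where \eqref{E:rwfp1} and \eqref{E:rwfp2} are used in the same spirit). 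Finally, (iv)$\Rightarrow$(i): if $\s_J\subset\fc_I^{-1}$, then each $N_j$ acts trivially on $\tGr^{W^I}$, so $W(\s_J) = W(\sum_{j\in J}N_j)$ can be computed inside the monodromy-weight formalism relative to $W^I$ and coincides with $W^I$ by the uniqueness of the relative weight filtration (Cattani–Kaplan–Schmid); equivalently, adding cone generators that centralize $\s_I$ does not change the weight filtration.

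The step I expect to be the main obstacle is (iii)$\Rightarrow$(iv): getting from "$\hat N_J = 0$" to the full statement "$\s_J\subset\fc_I^{-1}$" requires more than just knowing the $a=0$ component of $N_J$ vanishes — one must rule out the lower components $N_{J,a}$, $a<0$, contributing a nontrivial action on $\tGr^{W^I}$, and must also handle individual generators $N_j$ rather than just the sum $N_J$. The resolution is to invoke the full strength of the $\tSL(2)$ orbit theorem: the bigrading in \S\ref{S:sl2} is part of a compatible system of gradings associated to \emph{all} faces of $\s_J$, and $\hat N_J = 0$ propagates (via the monodromy-weight filtration being determined by any interior element) to the conclusion that $N(W^I_\ell)\subset W^I_{\ell-2}$ is already sharp, i.e.\ $N\in\fc_I$ for all $N\in\s_J$. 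Since $\s_J$ generates the relevant cone and $\fc_I^{-1}$ is a linear subspace, this gives $\s_J\subset\fc_I^{-1}$. I would carry out (i)$\Rightarrow$(ii)$\Rightarrow$(iii) first as they are formal consequences of the eigenspace decomposition, then (iv)$\Rightarrow$(i) using the uniqueness of the relative weight filtration, and finally close the loop with the delicate (iii)$\Rightarrow$(iv) step, pointing to \cite{\CKS} for the compatibility that makes it work.
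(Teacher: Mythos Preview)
Your cycle is correct except that you have routed through the one implication that is genuinely awkward: $(\mathrm{iii})\Rightarrow(\mathrm{iv})$.  The paper offers no argument beyond the sentence ``The properties \eqref{E:WIWJ} and \eqref{E:hatNJ} yield'', and the reason it can get away with this is that one never needs to pass directly from $\hat N_J=0$ to the statement about the \emph{individual} generators $N_j$.  A cleaner closing of the equivalences is: prove $(\mathrm{iv})\Rightarrow(\mathrm{iii})$ and $(\mathrm{i})\Rightarrow(\mathrm{iv})$ instead.  For $(\mathrm{iv})\Rightarrow(\mathrm{iii})$, the bigrading does all the work: $\s_J\subset\fc_I^{-1}\subset W^I_{-1}(\fg_\bC)=\bigoplus_{a\le-1}\fg_{a,b}$ by \eqref{E:WIWJ}, while \eqref{E:NJ} gives $N_J\in\bigoplus_{a\le0}\fg_{a,-a-2}$; comparing, the $a=0$ piece $N_{J,0}=\hat N_J$ must vanish.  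For $(\mathrm{i})\Rightarrow(\mathrm{iv})$, use the standard fact (part of the polarized nilpotent orbit data, cf.\ \cite{MR664326}) that every $N_j$ with $j\in J$ satisfies $N_j(W^J_\ell)\subset W^J_{\ell-2}$; if $W^I=W^J$ this reads $N_j\in W^I_{-2}(\fg)\subset\fp_{W^I}^{-1}$, and since the $N_j$ commute with $\s_I$ we get $N_j\in\fc_I^{-1}$.

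Your sketch for $(\mathrm{iii})\Rightarrow(\mathrm{iv})$ is not wrong, but both alternatives you offer are really disguised detours through $(\mathrm{i})$: the ``compatibility across faces'' argument amounts to re-deriving $W^I=W^J$ from $\hat N_J=0$ and then invoking the $(-1,-1)$-property of each $N_j$, and your ``cleanest route'' says this almost explicitly.  So there is no genuine gap, only an inefficient ordering.  One small point on $(\mathrm{i})\Rightarrow(\mathrm{ii})$: your deduction ``$\tad(\hat Y_J)=0$ hence $\hat Y_J=0$'' uses that $\fg=\tEnd(V,Q)$ is semisimple (so the adjoint representation is faithful); alternatively and more directly, the joint eigenspace decomposition $V_\bC=\bigoplus V_{a,b}$ of $(Y_I,\hat Y_J)$ acting on $V$ itself satisfies the analogue of \eqref{E:WIWJ}, and $W^I(V)=W^J(V)$ forces $V_{a,b}=0$ for $b\neq0$, so $\hat Y_J$ already acts as zero on $V$.
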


\begin{corollary} \label{C:IW}
\begin{a_list_emph}
\item
If $I \subset I' \subset J$ and $W(\s_I) = W(\s_J)$, then $W(\s_I) = W(\s_{I'}) = W(\s_J)$.
\item
If $W(\s_{I_1}) = W(\s_{I_2})$, then $W(\s_{I_i}) = W(\s_{I_1 \cup I_2})$.
\item \label{i:ChatIc}
Given a fixed weight filtration $W$, 
\[
  I_W \ = \ \bigcup_{W(\s_I) = W} I
\]
is the unique maximal set $I_W$ such that $W = W(\s_{I_W})$.
\end{a_list_emph}
\end{corollary}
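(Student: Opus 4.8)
The plan is to deduce all three parts from Lemma \ref{L:W=W}, specifically from the equivalence it records between the coincidence $W^I = W^J$ and the containment $\s_J \subset \fc_I^{-1}$; we work throughout inside a coordinate chart centered at a point of the deepest stratum in play, so that the cones appearing are faces of a single polarizing nilpotent cone and the $N_i$ pairwise commute, as in \S\ref{S:sl2}. Part \emph{(a)} is then immediate: if $I \subset I' \subset J$ and $W^I = W^J$, then Lemma \ref{L:W=W} (applied to $I \subset J$) gives $\s_J \subset \fc_I^{-1}$; since $\s_{I'} \subset \s_J$ this forces $\s_{I'} \subset \fc_I^{-1}$, so Lemma \ref{L:W=W} (applied to $I \subset I'$) gives $W^I = W^{I'}$, whence also $W^{I'} = W^J$.

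For part \emph{(b)}, suppose $W^{I_1} = W^{I_2} =: W$, and work in a chart centered at a point of $Z_{I_1 \cup I_2}^*$, so that the $N_i$ with $i \in I_1 \cup I_2$ are commuting logarithms of monodromy and $W^{I_1 \cup I_2} = W(\s_{I_1 \cup I_2})$ is a bona fide polarizing weight filtration. Fix $i \in I_1 \cup I_2$. Since $N_i$ commutes with every element of $\s_{I_1}$, it lies in $\fc_{I_1}$. Since $N_i$ belongs to at least one of the cones $\s_{I_1}$, $\s_{I_2}$, each of which polarizes the limiting mixed Hodge structure with weight filtration $W$, we have $N_i(W_a) \subset W_{a-2}$ for all $a$, so $N_i$ acts trivially on $\tGr^W$; as $W = W^{I_1}$, this means $N_i \in \fc_{I_1} \cap P_{W^{I_1}}^{-1} = \fc_{I_1}^{-1}$. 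Thus every generator of $\s_{I_1 \cup I_2}$ lies in the linear subspace $\fc_{I_1}^{-1}$, so $\s_{I_1 \cup I_2} \subset \fc_{I_1}^{-1}$, and Lemma \ref{L:W=W} (applied to $I_1 \subset I_1 \cup I_2$) gives $W^{I_1} = W^{I_1 \cup I_2}$. Interchanging $I_1$ and $I_2$ gives $W^{I_2} = W^{I_1 \cup I_2}$, so all three filtrations coincide.

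Part \emph{(c)} follows from \emph{(b)}. Maximality is immediate from the definition, since every $I$ with $W^I = W$ satisfies $I \subseteq I_W$. Because $\olB$ has only finitely many boundary components, $I_W = I_1 \cup \cdots \cup I_m$ is a finite union of index sets with $W^{I_\ell} = W$, and iterating \emph{(b)} --- first $W^{I_1 \cup I_2} = W$, then $W^{(I_1 \cup I_2) \cup I_3} = W$, and so on --- yields $W^{I_W} = W$.

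The one point requiring care is the reduction in \emph{(b)} to a common coordinate chart in which $\s_{I_1 \cup I_2}$ is an honest polarizing nilpotent cone, so that $W^{I_1 \cup I_2}$ is defined and Lemma \ref{L:W=W} applies; this is guaranteed precisely by the nonemptiness of $Z_{I_1 \cup I_2}^*$ that is already implicit in asserting the literal equality $W^{I_1} = W^{I_2}$ rather than a mere $\Gamma$--conjugacy. Everything else is a short passage through the Lie-theoretic dictionary of Appendix \ref{S:top-c}.
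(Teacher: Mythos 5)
The paper does not actually record a proof of Corollary \ref{C:IW}; it is stated immediately after Lemma \ref{L:W=W} with the argument left implicit. Your reconstruction via condition \emph{(iv)} of that lemma is correct and is the natural route. Part \emph{(a)} is exactly the expected one-liner: $\s_J \subset \fc_I^{-1}$ and $\s_{I'}$ sitting in the closure of $\s_J$ (and $\fc_I^{-1}$ being a linear, hence closed, subspace) give $\s_{I'} \subset \fc_I^{-1}$, so $W^I = W^{I'}$. For part \emph{(b)}, the two observations you make are precisely what is needed: each generator $N_i$ with $i \in I_1 \cup I_2$ commutes with $\s_{I_1}$ (all the $N_j$ arise as commuting local monodromy logarithms in a chart centered on $Z^*_{I_1\cup I_2}$), so $N_i \in \fc_{I_1}$; and each such $N_i$ lies in a cone whose associated weight filtration is $W$, hence $N_i(W_a) \subset W_{a-2}$ and $N_i \in \fp_W^{-1}$ (indeed $\fp_W^{-2}$). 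Thus $\s_{I_1\cup I_2} \subset \fc_{I_1}^{-1}$ and Lemma \ref{L:W=W} closes the loop. Part \emph{(c)} by finite iteration is fine, and you correctly flag the one genuine subtlety, namely that asserting a literal (rather than $\Gamma$-conjugate) equality $W^{I_1} = W^{I_2}$ already presupposes a common chart where $\s_{I_1\cup I_2}$ is a polarizing cone, so that $W^{I_1\cup I_2}$ is defined and the lemma is applicable. The only cosmetic blemish is the group/Lie-algebra notational slip $\fc_{I_1} \cap P_{W}^{-1}$, which should read $\fc_{I_1} \cap \fp_{W}^{-1}$; the intent is clear and does not affect the argument.
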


If $W(\s_I) = W(\s_J)$, then $\fg_{a,\tinyb} = \fg_{a,0}$ implies 
\begin{subequations}\label{SE:cIJ}
\begin{equation} \label{E:cIJ}
  \fc_J^{-a} \ \subset \ \fc_I^{-a} \,,
\end{equation}
and
\begin{equation}
  \frac{\fc_J^{-a}}{\fc_J^{-a-1}} \ \inj \ \frac{\fc_I^{-a}}{\fc_I^{-a-1}} \,.
\end{equation}
\end{subequations}
In the case $a=1$, the inclusion \eqref{E:cIJ} yields the striking implication (known to the experts)

\begin{lemma}\label{L:sIJ}
If $\s_J \subset \fc_I^{-1}$, then $\s_J \subset \fc_I^{-2}$.
\end{lemma}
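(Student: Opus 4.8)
The statement to prove is: if $\s_J \subset \fc_I^{-1}$ then in fact $\s_J \subset \fc_I^{-2}$. Recall $\fc_I^{-a} = \fc_I \cap P_W^{-a}$, and in terms of the Deligne bigrading attached to the $\bR$--split LMHS $(W^I,F)$ one has (cf.~\eqref{E:cIa}) that $\fc_I^{-a} = \bigoplus_{p+q\le -a}\fc_{I,F}^{p,q}$. So the content is: an element $N \in \s_J$ lying in $\bigoplus_{p+q \le -1}\fc_{I,F}^{p,q}$ must actually lie in $\bigoplus_{p+q\le -2}\fc_{I,F}^{p,q}$, i.e. its component in $\bigoplus_{p+q=-1}\fc_{I,F}^{p,q}$ vanishes.

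The plan is to feed the hypothesis $\s_J \subset \fc_I^{-1}$ into Lemma \ref{L:W=W}. By Lemma \ref{L:W=W}\emph{\ref{i:c}}, the condition $\s_J \subset \fc_I^{-1}$ is exactly equivalent to $W^I = W^J$. So first I would invoke that equivalence to upgrade the hypothesis to the statement that the two weight filtrations coincide. Then I would apply \eqref{SE:cIJ}: when $W^I = W^J$ we have $\fc_J^{-a} \subset \fc_I^{-a}$ for every $a$. In particular — and this is the key input — $\fg_{a,\smallb} = \fg_{a,0}$ under the $(Y_I,\hat Y_J)$--bigrading when $\hat Y_J = 0$ (which holds since $W^I=W^J$, again by Lemma \ref{L:W=W}). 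Now $\s_J \subset \fg_{-2,0}$: indeed $N_J \in \bigoplus_{a\le 0}\fg_{a,-a-2}$ with $N_{J,0} = \hat N_J = 0$ (Lemma \ref{L:W=W}), forcing $N_J \in \bigoplus_{a\le -1}\fg_{a,-a-2}$; but since $W^I = W^J$ collapses the second grading, each generator $N_j$ of $\s_J$, being nilpotent of weight $-2$ for $W^I$, sits in $W^I_{-2}(\fg_\bC) = \bigoplus_{a\le -2}\fg_{a,0}$. Combined with $N_j \in \fc_I$ (the $N_j$, $j\in J$, commute with $\s_I \subset \s_J$ since all the $N$'s in a single cone commute) and the fact that $\fc_I \subset W^I_0(\fg_\bC)$, one concludes $\s_J \subset W^I_{-2}(\fc_I) = \fc_I \cap P_W^{-2} = \fc_I^{-2}$, as desired.

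The main obstacle — really the only subtle point — is bookkeeping with the two compatible gradings from the $\tSL(2)$ orbit theorem: one must be careful that the hypothesis $\s_J \subset \fc_I^{-1}$ (a $P_W$--level statement) translates correctly through Lemma \ref{L:W=W} into the collapse $\hat N_J = \hat Y_J = 0$, and then that this collapse forces the $W^I$--weight of every $N_j$, $j \in J$, to be $\le -2$ rather than merely $\le -1$. Once $W^I = W^J$ is in hand this is immediate from \eqref{E:WIWJ} and \eqref{SE:NJ}, since the $b$--grading disappears and the only surviving piece of $N_J$ lives in $\bigoplus_{a\le -2}\fg_{a,0}$. I would present the argument in precisely that order: (1) apply Lemma \ref{L:W=W}\emph{\ref{i:c}} to get $W^I=W^J$; (2) apply Lemma \ref{L:W=W} again to get $\hat N_J=0$, hence via \eqref{SE:NJ} that $N_J$ — and each $N_j$ — has $W^I$--weight $\le -2$; (3) combine with $\s_J \subset \fc_I$ to land in $\fc_I^{-2}$.
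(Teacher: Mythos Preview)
Your approach is correct and is essentially the paper's own argument: the paper does not give a separate proof, it simply remarks that \eqref{E:cIJ} yields the lemma. Unpacked, that means exactly your steps (1) and (3): the hypothesis $\s_J\subset\fc_I^{-1}$ is equivalent to $W^I=W^J$ by Lemma~\ref{L:W=W}\emph{\ref{i:c}}, and then one combines $\s_J\subset\fc_J^{-2}$ with the inclusion $\fc_J^{-2}\subset\fc_I^{-2}$ of \eqref{E:cIJ} (case $a=2$).

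There is one wobble in your step (2) that you should clean up. The decomposition \eqref{SE:NJ} and the vanishing $\hat N_J=0$ concern only the sum $N_J=\sum_{j\in J}N_j$, not the individual generators $N_j$; and your phrase ``each generator $N_j$ \ldots\ being nilpotent of weight $-2$ for $W^I$'' is precisely the assertion you are trying to establish, so as written it is circular. The clean fix is to invoke directly the standard fact (Cattani--Kaplan) that the monodromy weight filtration $W^J=W(\s_J)$ satisfies $N_j(W^J_\ell)\subset W^J_{\ell-2}$ for every $j\in J$, i.e.\ $N_j\in W^J_{-2}(\fg)$; equivalently, $N_j\in\fg^{-1,-1}_{W^J,F}$ for the limit Hodge filtration. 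Since $W^I=W^J$, this gives $N_j\in W^I_{-2}(\fg)$, and together with $N_j\in\fc_I$ (already contained in your hypothesis $\s_J\subset\fc_I^{-1}\subset\fc_I$) you land in $\fc_I^{-2}$. This is exactly the content of $\s_J\subset\fc_J^{-2}\subset\fc_I^{-2}$, which is how the paper phrases it.
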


\begin{corollary} \label{C:sIJ}
We have $\exp(\bC\s_{I_W}) \subset C^{-2}_{I,\bC}$.
\end{corollary}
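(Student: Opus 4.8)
The plan is to derive the corollary immediately from Lemmas \ref{L:W=W} and \ref{L:sIJ}, together with the unipotence of $C^{-2}_{I,\bC}$. Throughout, $W = W^I$, so Corollary \ref{C:IW}\,\eref{i:ChatIc} identifies $I_W$ as the unique maximal subset with $W^{I_W} = W$; in particular $I \subseteq I_W$ and $W^I = W^{I_W}$.

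First I would apply Lemma \ref{L:W=W} to the inclusion $I \subseteq I_W$. Its hypothesis $W^I = W^{I_W}$ holds by the previous paragraph, so the equivalence \eref{i:c} gives $\s_{I_W} \subset \fc^{-1}_I$. Next, Lemma \ref{L:sIJ}, applied with $J = I_W$, upgrades this to $\s_{I_W} \subset \fc^{-2}_I$; complexifying, $\bC\s_{I_W} \subset \fc^{-2}_{I,\bC}$.

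It remains to pass from the Lie algebra to the group. The subgroup $C^{-2}_{I,\bC} = C_{I,\bC} \cap P^{-2}_{W,\bC}$ is contained in $P^{-1}_{W,\bC}$, which is unipotent (its elements act trivially on $\tGr^W$), so $C^{-2}_{I,\bC}$ is unipotent and the exponential restricts to a biholomorphism $\fc^{-2}_{I,\bC} \stackrel{\simeq}{\longrightarrow} C^{-2}_{I,\bC}$. Since $\bC\s_{I_W} \subset \fc^{-2}_{I,\bC}$, we conclude $\exp(\bC\s_{I_W}) \subset C^{-2}_{I,\bC}$, as claimed.

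There is no genuine obstacle here: the substance of the statement is entirely carried by Lemma \ref{L:sIJ} (which in turn rests on the commuting $\fsl(2)$'s of \S\ref{S:sl2}), and the only thing to watch is the bookkeeping — confirming that the $W$ in the corollary is $W^I$, so that $I_W$ is genuinely the maximal index set attached to it, and that the target is $\fc^{-2}_I$ rather than merely $\fc^{-1}_I$.
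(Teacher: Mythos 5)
Your proof is correct and is exactly the argument the paper intends (the paper states the corollary with no proof, but the surrounding Lemmas \ref{L:W=W} and \ref{L:sIJ}, together with the definition of $I_W$ in Corollary \ref{C:IW}\emph{\ref{i:ChatIc}}, are clearly the intended ingredients). The only genuinely nontrivial step is Lemma \ref{L:sIJ}, which you correctly invoke as a black box; the passage from the Lie algebra containment to the group containment via unipotence of $C^{-2}_{I,\bC}$ is handled appropriately.
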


%----------------------------------------------------------
\subsection{Consequences for LMHS} \label{S:EIJ}
%----------------------------------------------------------

Since $Z_J^* \subset Z_I$ if and only if $I \subset J$, we have $\Gamma_J \subset \Gamma_I$.  We will also see that $M_J \subset M_I$, cf.~\eqref{E:MJinMI}.  In particular, we have an induced $\Gamma_J\backslash M_J \to \Gamma_I \backslash M_I$.  When $W(\s_I) = W(\s_J)$, then this map descends to $\Gamma_J\backslash D_J \to \Gamma_I \backslash D_I$.  

Recall (\S\ref{S:PhiI}) that the local lift of $\Psi_I : Z_I^* \to (\Gamma_I\exp(\bC\s_I))\bs M_I$ is 
\begin{equation} \label{E:nuFI}
    \nu_I \circ F_I : Z_I^* \,\cap\, \olU \to \ 
    \exp(\bC\s_I)\backslash M_I \,.
\end{equation}
Define
\[
  Z_W \ = \ 
  \bigcup_{\mystack{Z_J \cap \olU \not=\emptyset}{W(\s_I) = W(\s_J)}}
  Z_J^* \,.
\]

\begin{lemma}\label{L:tedI}
There is a well-defined holomorphic map
\begin{equation} \label{E:tPhiI}
  \widetilde \Psi_I : Z_I \,\cap\, Z_W \,\cap\,\olU \ \to \
  \exp(\bC\s_{I_W})\backslash M_I
\end{equation}
that, when restricted to $Z_J^* \subset Z_I \cap Z_W$, coincides with the composition
$\nu_{I_W} \circ F_J$.
\end{lemma}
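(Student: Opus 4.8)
\textbf{Plan of proof for Lemma \ref{L:tedI}.}
The goal is to produce a single holomorphic map $\tPhi_I$ on $Z_I \cap Z_W \cap \olU$ into $\exp(\bC\s_{I_W})\backslash D_I$ that glues the strata-wise maps $\nu_{I_W}\circ F_J$ over each $Z_J^*$ with $I \subset J \subset I_W$. The plan is to build $\tPhi_I$ from the single holomorphic map $\xi : \olU \to G_\bC$ attached to a chart centered at a point of the deepest relevant stratum, and then to check that the resulting composition (i) lands in $D_I$, and (ii) is independent of the choices in the definition of $F_J$ precisely after passing to the quotient by $\exp(\bC\s_{I_W})$.

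First I would fix a coordinate neighborhood $\olU$ centered at a point $b$ whose stratum is $Z_{I_W}^* \cap \olU = \{t = 0\}$ (so the full monodromy cone of the chart is $\s_{I_W}$), and recall from \S\ref{S:not-loclift} that the local lift has the form $\tPhi(t,w) = \exp(\sum \ell(t_i)N_i)\,\xi(t,w)\cdot F$ with $\xi$ holomorphic on $\olU$. For each $J$ with $I \subset J \subset I_W$, the restriction $\xi_J = \xi|_{Z_J^*\cap\olU}$ takes values in $C_{J,\bC}$, and $F_J(w) = \xi_J(w)\cdot F$ is the local lift of $\Phi_J$. The candidate extension is
\[
  \tPhi_I(t,w) \ = \ \xi(t,w)\cdot F \pmod{\exp(\bC\s_{I_W})} \,,
\]
a holomorphic map on all of $Z_I\cap Z_W\cap\olU$ simply because $\xi$ is holomorphic there. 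The content is then twofold. For (i): I must show $\xi(t,w)\cdot F \in D_I$, i.e. that $(W,\xi(t,w)\cdot F)$ is a mixed Hodge structure polarized by $\s_I$ for every point of $Z_I\cap Z_W\cap\olU$ — here I would use that $W = W^J = W^I$ is constant along $Z_I\cap Z_W$ (Corollary \ref{C:IW}), together with the $\tSL(2)$-orbit-theorem relations \eqref{E:WIWJ}, \eqref{E:hatNJ} of \S\ref{S:sl2} and the polarization statements of the appendix, arguing as in the proof of Lemma \ref{L:rwfp} that the relevant filtrations and polarizing forms vary holomorphically and remain $\s_I$-polarized in the limit. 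For (ii): I must check the ambiguity in $\xi$ — a different choice of coordinates changes $\xi$ by an element of $\exp(\bC\s_{I_W})$ acting on the $F$ side — which is exactly killed by the quotient; this is why the target is $\exp(\bC\s_{I_W})\backslash D_I$ rather than $\exp(\bC\s_I)\backslash D_I$, and the key input is $\exp(\bC\s_{I_W}) \subset C^{-2}_{I,\bC}$ (Corollary \ref{C:sIJ}), together with $\s_J \subset \fc_I^{-1}$ for $W^I=W^J$ (Lemma \ref{L:W=W}\emph{\ref{i:c}}).

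Finally, the compatibility claim — that $\tPhi_I$ restricted to $Z_J^* \subset Z_I\cap Z_W$ equals $\nu_{I_W}\circ F_J$ — is then immediate from the construction: on $Z_J^*\cap\olU$ we have $\xi(t,w)\cdot F = \xi_J(w)\cdot F = F_J(w)$, and reducing mod $\exp(\bC\s_{I_W})$ is precisely applying $\nu_{I_W}$. The main obstacle I expect is (i): verifying that $\xi(t,w)\cdot F$ actually stays inside $D_I$ over the whole weight-closure, because $F_J$ is only defined over the open stratum $Z_J^*$ and a priori the limiting filtration over a deeper stratum need not be $\s_I$-polarized. This is where one genuinely needs the commuting-$\fsl_2$ structure of \S\ref{S:sl2} and the monotonicity $\fc_J^{-a}\subset\fc_I^{-a}$ of \eqref{SE:cIJ}: the face $\s_I \subset \s_J$ acts through operators whose weight-theoretic behaviour with respect to $W = W^I = W^J$ is controlled by the $(Y_I,\hat Y_J)$-bigrading, and one reads off from \eqref{E:WIWJ} that the polarization conditions defining $D_I$ are preserved in the limit. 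Once that is in hand, holomorphy and compatibility are formal, and Lemma \ref{L:PhiI} follows by patching these local $\tPhi_I$ together over a cover of $Z_I\cap Z_W$ (the overlaps agreeing by the compatibility statement) and invoking the proper mapping theorem as in Lemma \ref{L:PhiW}.
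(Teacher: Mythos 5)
Your plan follows the paper's route: write the local lift via the chart's holomorphic $\xi$, set $\tPhi_I$ equal to $\xi \cdot F$ modulo $\exp(\bC\s_{I_W})$, observe that holomorphy is then automatic, and isolate the real content to the containment $F \in D_I$. The paper flags this last point explicitly (``Note that the containment $F \in D_I$ is nontrivial, as $F$ arises from the LMHS along $Z_J^*$'') and records it as $F \in D_J \subset D_I$ in \eqref{E:JIin}. So your outline is the correct one. There are, however, two errors in the write-up.

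First, you center the chart at a point of $Z_{I_W}^*$. The paper centers at $b \in Z_J^*$ for whichever $J$ (with $I \subset J \subset I_W$) actually satisfies $b \in Z_J^*$. This matters: $Z_{I_W}^*$ is the lowest-dimensional stratum in $Z_I \cap Z_W$, so charts centered there cover only a neighborhood of $Z_{I_W}^*$ and cannot cover all of $Z_I \cap Z_W$; your final patching step toward Lemma~\ref{L:PhiI} would then break. The repair is mechanical --- run your argument with the chart's actual deepest stratum $Z_J^*$ in place of $Z_{I_W}^*$ throughout --- but, as written, the proposal does not prove the lemma.

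Second, with a chart centered at $Z_{I_W}^*$, the assertion ``$F_J(w) = \xi_J(w)\cdot F$'' and the claim that $\xi|_{Z_J^*\cap\olU}$ takes values in $C_{J,\bC}$ hold only for $J = I_W$. At a point $(t,w) \in Z_J^*\cap\olU$ with $J \subsetneq I_W$ one has $t_j \neq 0$ for $j \in I_W\setminus J$, and the correct expression (as in \eqref{SE:FIJ}) is
\[
  F_J(t,w) \ = \ \exp\Big(\sum_{j \in I_W\setminus J} \ell(t_j) N_j\Big)\,\xi(t,w) \cdot F \,.
\]
Consequently the identity ``$\xi(t,w)\cdot F = \xi_J(w)\cdot F = F_J(w)$'' that you invoke in the compatibility paragraph is false. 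The compatibility $\tPhi_I|_{Z_J^*} = \nu_{I_W}\circ F_J$ is nevertheless true, but because the discrepancy $\exp(\sum_{j\in I_W\setminus J}\ell(t_j)N_j)$ lies in $\exp(\bC\s_{I_W})$ and is annihilated by $\nu_{I_W}$, not because the two filtrations coincide. You should state it that way. With these two repairs your argument coincides with the paper's proof.
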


\begin{proof}[Proof of Lemma \ref{L:PhiI}]
Corollary \ref{C:sIJ} implies that 
\[
  (\exp(\bC\s_{I_W}) C^{-1}_{I,\bC})\backslash M_I 
  \ = \ C^{-1}_{I,\bC}\backslash M_I 
  \ = \ D_I \,.
\]
So the composition
\[\begin{tikzcd}
  Z_I \,\cap\, Z_W \,\cap\,\olU \arrow[r,"\widetilde \Psi_I"]
  & \exp(\bC\s_{I_W})\backslash M_I \arrow[r,->>]
  & (\exp(\bC\s_{I_W}) C^{-1}_{I,\bC})\backslash M_I \ = \ D_I
\end{tikzcd}\]
is the local coordinate expression for the map $\Phi_I:Z_I \cap Z_W \to \Gamma_I \backslash D_I$ of \eqref{E:PhiIextn}.  Thus Lemma \ref{L:PhiI} follows directly from Lemma \ref{L:tedI}.
\end{proof}

\begin{proof}[Proof of Lemma \ref{L:tedI}]
Suppose that $I\subset J$ and $W(\s_I) = W(\s_J)$.  Consider a local lift $\tPhi(t,w)$ over a chart $\olU$ centered at $b \in Z_J^*$ (as in \S\ref{S:vhs}).  Along 
\[
  Z_J \,\cap\, \olU \ = \ \{t_j =0 \ \forall \ j \in J\} 
  \ = \ \{0\} \times \Delta^r \ \ni \ (0,w) 
\]
we have the map $F_J : Z^*_J \cap \olU \to M_J$ of \eqref{E:FI}
\begin{subequations}\label{SE:FIJ}
\begin{equation}
  F_J(w) \ = \ \tilde g(0,w) \cdot F \,.
\end{equation}
Along $Z^*_I \cap\olU = \{ t_i =0 \ \mathrm{iff} \  i \in I \}$ we may choose a well-defined branch of $\ell(t_j)$ for all $j \in J \backslash I$.  Then the map $F_I : Z_I^* \cap \olU \to M_I$ is given by 
\begin{equation} %\label{E:FI3}
  F_I(t,w) \ = \ 
  \exp\bigg( \sum_{j \in J \backslash I} \ell(t_j) N_j\bigg) \tilde g(t,w)
  \cdot F \,.
\end{equation}
\end{subequations}
Comparing the expressions \eqref{SE:FIJ} for $F_J$ and $F_I$, and keeping $C_J \subset C_I$ and \eqref{E:cIJ} in mind, we see that 
\begin{equation} \label{E:MJinMI}
  F \ \in \ M_J \ \subset \ M_I 
\end{equation}
and $F_J$ takes value in $M_I$.  (The containment $F \in M_I$ is nontrivial, as $F$ arises from the LMHS along $Z_J^*$.)  It follows from \eqref{SE:FIJ} and \eqref{E:MJinMI} that
\[
    \nu_J \circ F_J : Z_J^* \,\cap\, \olU \ \to \ 
    \exp(\bC\s_J)\backslash M_I 
\]
\emph{also takes value in} (a quotient of) $M_I$.  The lemma now follows from \eqref{SE:FIJ}.
\end{proof}

\begin{remark}\label{R:DJinDI}
When $W(\s_I) = W(\s_J)$ and $I \subset J$, we have $C_J^{-1} = C_J \cap C_I^{-1}$.  Then \eqref{E:DI} and \eqref{E:MJinMI} imply $D_J \subset D_I$.
\end{remark}

%----------------------------------------------------------
\def\cprime{$'$} \def\Dbar{\leavevmode\lower.6ex\hbox to 0pt{\hskip-.23ex
  \accent"16\hss}D}

%----------------------------------------------------------

%----------------------------------------------------------
\end{document}